\newcommand{\innOm}[2]{(#1,#2)_\Omega}
\newcommand{\innGa}[2]{\langle #1,#2\rangle_\Gamma}
\newcommand{\duaOm}[2]{[#1,#2]_\Omega}
\newcommand{\duaGa}[2]{[#1,#2]_\Gamma}
\newcommand{\norm}[1]{\left\|#1\right\|}
\numberwithin{equation}{section}
\newtheorem{theorem}{Theorem}[section]
\newtheorem{lemma}[theorem]{Lemma}
\newtheorem{prop}[theorem]{Proposition}
\newtheorem{corollary}[theorem]{Corollary}
\newtheorem{thm}[theorem]{Theorem}
\theoremstyle{definition}
\newtheorem{definition}[theorem]{Definition}
\theoremstyle{remark}
\newtheorem{remark}[theorem]{Remark}
\begin{document}

\title{A New HDG Method for Dirichlet Boundary Control of Convection Diffusion PDEs I: High Regularity}

\author{Weiwei Hu%
	\thanks{Department of Mathematics, Oklahoma State
		University, Stillwater, OK (weiwei.hu@okstate.edu). W.~Hu was supported in part by a postdoctoral fellowship for the annual program on Control Theory and its Applications at the Institute for Mathematics and its Applications (IMA) at the University of Minnesota.}%
	\and
	Mariano Mateos%
	\thanks{Dpto. de Matem\'aticas. Universidad de Oviedo, Campus de Gij\'on, Spain (mmateos@uniovi.es).  M.\ Mateos was supported by the Spanish Ministerio de Econom\'{\i}a y Competitividad under project MTM2014-57531-P.}%
	\and
	John~R.~Singler%
	\thanks{Department of Mathematics
		and Statistics, Missouri University of Science and Technology,
		Rolla, MO (\mbox{singlerj@mst.edu}, ywzfg4@mst.edu). J.~Singler and Y.~Zhang were supported in part by National Science Foundation grant DMS-1217122.  J.~Singler and Y.~Zhang thank the IMA for funding research visits, during which some of this work was completed.}
	\and
	Yangwen Zhang%
	\footnotemark[3]
}

\date{}

\maketitle

\begin{abstract}
	We propose a new hybridizable discontinuous Galerkin (HDG) method to approximate the solution of a Dirichlet boundary control problem governed by an elliptic convection diffusion PDE.  Even without a convection term, Dirichlet boundary control problems are well-known to be very challenging theoretically and numerically.  Although there are many works in the literature on Dirichlet boundary control problems for the Poisson equation, the authors are not aware of any existing theoretical or numerical analysis works for convection diffusion Dirichlet control problems.  We make two contributions.  First, we obtain well-posedness and regularity results for the Dirichlet control problem.  Second, under certain assumptions on the domain and the target state, we obtain optimal a priori error estimates in 2D for the control for the new HDG method.  As far as the authors are aware, there are no existing comparable results in the literature.  We present numerical experiments to demonstrate the performance of the HDG method.
\end{abstract}

\section{Introduction}
We consider the following  Dirichlet boundary control problem. Let $\Omega\subset \mathbb{R}^{d} $ $ (d\geq 2)$ be a Lipschitz polyhedral domain  with boundary $\Gamma = \partial \Omega$.  The goal is to find the optimal control $ u \in L^2(\Gamma) $ that minimizes the cost function
\begin{align}
J(u)=\frac{1}{2}\| y- y_{d}\|^2_{L^{2}(\Omega)}+\frac{\gamma}{2}\|u\|^2_{L^{2}(\Gamma)}, \quad \gamma>0, \label{cost1}
\end{align}
subject to the elliptic convection diffusion equation
\begin{equation}\label{Ori_problem}
\begin{split}
-\Delta y+\bm \beta\cdot\nabla y&=f \quad\text{in}~\Omega,\\
y&=u\quad\text{on}~\partial\Omega,
\end{split}
\end{equation}
where $ f \in L^2(\Omega) $ and the vector field $\bm{\beta}$ satisfies
\begin{align}\label{beta_con}
\nabla\cdot\bm{\beta} \le 0.
\end{align}
We make other smoothness assumptions on $ \bm \beta $ for our analysis.

Optimal control problems governed by convection diffusion equations play an important role in many scientific and engineering problems \cite{MR1766429}. Efficient and accurate numerical  methods are essential to successful applications of such optimal control problems.  There exist many contributions \cite{MR2302057,MR2595051,MR2486088,MR2178571,MR2068903,MR2550371} to numerical methods and algorithms for this kind of problem.  Despite this large amount of existing work on numerical methods for convection diffusion optimal control problems and also Dirichlet boundary control problems for the Poisson equation and other PDEs \cite{MR2272157,MR2558321,MR3070527,MR2347691,MR3317816,MR3614013,MR2020866,MR2567245,MR1632548,ravindran2017finite,MR3641789,MR2806572,ApelMateosPfeffererRosch17,MR1135991,MR1145711,MR1874072}, we are not aware of any existing work on the analysis and approximation of solutions for the above convection diffusion Dirichlet boundary control problem.  Work on this problem is an important step towards the analysis and approximation of Dirichlet boundary control problems for the Navier-Stokes equations and other fluids models.

In recent years, the discontinuous Galerkin (DG) methods have been proved very useful in solving a large range of computational fluids problems \cite{MR1241479,MR1860450,MR2031230,MR1929628} and optimal control problems for convection diffusion PDEs \cite{MR3022208,MR3416418,MR2773301,MR3307573,MR2587414,MR2644299,MR3149415}. However, one disadvantage of DG methods is the high number of degree of freedom compared to standard finite element methods.

Hybridizable Discontinuous Galerkin (HDG) methods, proposed by Cockburn et al.\ in \cite{MR2485455}, have the same advantages as typical DG methods but have many less globally coupled unknowns.  HDG methods are currently undergoing rapid development and have been used in many applications; see, e.g., \cite{MR2772094,MR2513831,MR2558780,MR2796169, MR3626531,MR3522968,MR3463051,MR3452794,MR3343926}.

Formally, the optimal control $ u \in L^2(\Gamma) $ and the optimal state $ y \in L^2(\Omega) $ minimizing the cost functional satisfy the optimality system
\begin{subequations}\label{eq_adeq}
	\begin{align}
	-\Delta y+\bm \beta\cdot\nabla y &=f\qquad\quad\text{in}~\Omega,\label{eq_adeq_a}\\
	y &= u\qquad\quad\text{on}~\partial\Omega,\label{eq_adeq_b}\\
	-\Delta z-\nabla\cdot(\bm{\beta} z) &=y-y_d\quad  \text{in}~\Omega,\label{eq_adeq_c}\\
	z&=0\qquad\quad~\text{on}~\partial\Omega,\label{eq_adeq_d}\\
	\nabla z \cdot\bm n - \gamma  u&=0\qquad\quad \  \text{on}~\partial\Omega.\label{eq_adeq_e}
	\end{align}
\end{subequations}
Even in the absence of the convection term, theoretical analysis, numerical discretization, and numerical analysis for the above optimal Dirichlet control problem and optimality system can be very challenging.  Difficulties arise due to the control entering in the Dirichlet boundary condition \eqref{eq_adeq_b} and also due to the normal derivative of the dual state on the boundary in \eqref{eq_adeq_e}.  See the references above on the Dirichlet boundary control problems for the Poisson equation for more information.

In order to apply an HDG method, a mixed formulation of the state equation \eqref{eq_adeq_a}--\eqref{eq_adeq_b} and of the adjoint state equation \eqref{eq_adeq_c}--\eqref{eq_adeq_d} must be used. The meaning of the state equation \eqref{eq_adeq_a} for Dirichlet boundary data in $L^2(\Gamma)$ must be made clear for this kind of formulation.  In \Cref{sec:Analysis_of_the_Dirichlet_Control_Problem}, we perform this analysis for the case of a 2D polygonal domain and also establish well-posedness and regularity results for the optimality system \eqref{eq_adeq}.

In a recent work \cite{HuShenSinglerZhangZheng_HDG_Dirichlet_control1}, we  approximate the solution of the Dirichlet boundary control problem for the Poisson equation using an existing HDG method.  This method uses polynomials of degree $k+1$ to approximate the state $y$ and dual state $z$ and  polynomials of degree $k \ge 0$ for the fluxes $\bm q = -\nabla  y $ and $ \bm p = -\nabla z$, respectively. Moreover, we also used polynomials of degree $k$ to approximate the numerical trace of the state and dual state on the edges (or faces) of the spatial mesh, which are the only globally coupled unknowns.  We obtained a superlinear convergence rate for the optimal control under certain basic assumptions on the desired state $ y_d $ and the domain $ \Omega $.  Despite the large amount of existing work on this problem, a similar convergence rate has only very recently been proved for one other numerical method: a finite element method on a special class of meshes \cite{ApelMateosPfeffererRosch17}.

However, it is not clear to the authors if existing HDG methods can guarantee the superlinear convergence rate as we obtained in \cite{HuShenSinglerZhangZheng_HDG_Dirichlet_control1} for elliptic convection diffusion PDEs.  Therefore, we devise a new HDG method in \Cref{sec:HDG} using polynomials of degree $k+1$ to approximate the state $y$, dual state $z$, and the numerical traces.  Moreover, we use polynomials of degree $k \ge 0$ for the fluxes $\bm q = -\nabla  y $ and $ \bm p = -\nabla z$, respectively.  In \Cref{sec:analysis}, we prove the same superlinear rate of convergence as in \cite{HuShenSinglerZhangZheng_HDG_Dirichlet_control1} for the control in 2D under certain assumptions on the largest angle of the convex polygonal domain and the smoothness of the desired state $ y_d $.  To give a specific example, for a rectangular 2D domain and $ y_d\in H^{1-\varepsilon}(\Omega)$, we obtain the following a priori error bounds for the state $ y $, adjoint state $ z $, their fluxes $ \bm{q} = -\nabla y $ and $ \bm{p} = -\nabla z $, and the optimal control $ u $:
\begin{align*}
&\norm{y-{y}_h}_{0,\Omega}=O( h^{3/2-\varepsilon} ),\quad  \;\norm{z-{z}_h}_{0,\Omega}=O( h^{3/2-\varepsilon} ),\\
&\norm{\bm{q}-\bm{q}_h}_{0,\Omega} = O( h^{1-\varepsilon} ),\quad  \quad \norm{\bm{p}-\bm{p}_h}_{0,\Omega} = O( h^{3/2-\varepsilon} ),
\end{align*}
and
\begin{align*}
&\norm{u-{u}_h}_{0,\Gamma} = O( h^{3/2-\varepsilon} ),
\end{align*}
for any $\varepsilon >0$.  The rate of convergence for the control is optimal.  We present numerical results in \Cref{sec:numerics} that precisely match the convergence theory for the control in 2D.

In the second part of this work \cite{HuMateosSinglerZhangZhang17}, we remove the assumptions required here on the convex polygonal domain and the desired state $ y_d $ and prove optimal convergence rates for the control.  Removing these assumptions on the domain and the desired state lower the regularity of the solution of the optimality system; therefore, regular HDG error analysis techniques are not applicable.  We perform a nonstandard HDG error analysis based on techniques from \cite{MR3508837,LiXie16_SINUM} to establish the low regularity convergence results.

We emphasize that this new HDG method may be of primary interest for boundary control problems such as the one considered here.  Existing HDG methods use order $ k $ polynomials for the numerical traces, which are the only globally coupled unknowns.  This new HDG method uses order $ k+1 $ polynomials for the numerical traces, and therefore it has a higher computational cost compared to existing HDG methods.  However, adding one polynomial degree to the space for the numerical traces is the only way we have found to guarantee the optimal convergence rate for the control.  The authors are not aware of any other application where this new HDG method will lead to an improved convergence analysis over existing HDG methods.

\section{Analysis of the Dirichlet Control Problem}
\label{sec:Analysis_of_the_Dirichlet_Control_Problem}

To begin, we set notation and prove some fundamental results concerning the optimality system for the control problem in the 2D case.

Throughout the paper we adopt the standard notation $W^{m,p}(\Omega)$ for Sobolev spaces on $\Omega$ with norm $\|\cdot\|_{m,p,\Omega}$ and seminorm $|\cdot|_{m,p,\Omega}$ . We denote $W^{m,2}(\Omega)$ by $H^{m}(\Omega)$ with norm $\|\cdot\|_{m,\Omega}$ and seminorm $|\cdot|_{m,\Omega}$. Specifically, $H_0^1(\Omega)=\{v\in H^1(\Omega):v=0 \;\mbox{on}\; \partial \Omega\}$.  We denote the $L^2$-inner products on $L^2(\Omega)$ and $L^2(\Gamma)$ by
\begin{align*}
(v,w)_{\Omega} &= \int_{\Omega} vw  \quad \forall v,w\in L^2(\Omega),\\
\left\langle v,w\right\rangle_{\Gamma} &= \int_{\Gamma} vw  \quad\forall v,w\in L^2(\Gamma).
\end{align*}
Define the space $H(\text{div},\Omega)$ as
\begin{align*}
H(\text{div},\Omega) = \{\bm{v}\in [L^2(\Omega)]^d, \nabla\cdot \bm{v}\in L^2(\Omega)\}.
\end{align*}
Duality between $H^{1}(\Omega)^*$ and $H^1(\Omega)$ will be denoted
$\duaOm{q}{r}$, while duality between $H^{-\varepsilon}(\Gamma)$ and $H^{\varepsilon}(\Gamma)$ for $0\leq \varepsilon\leq 1/2$ will be denoted $\duaGa{u}{v}$.

Throughout this section, we consider $\Omega$ a polygonal domain, not necessarily convex, and denote $\omega$ its biggest interior angle. Notice that $1/2<\pi/\omega<1$ for nonconvex domains and $1<\pi/\omega\leq 3$ for convex domains.  Furthermore, we assume in this section $ \bm \beta $ satisfies the following conditions:
\begin{equation}\label{eqn:beta_assumptions1}
\begin{split}
\bm \beta \in [L^\infty(\Omega)]^d,  \quad  \nabla\cdot\bm{\beta}\in L^\infty(\Omega),  \quad  \nabla \cdot \bm \beta \leq 0  &\quad  \mbox{for any $ \Omega $, and also}\\
\nabla \nabla \cdot \bm{\beta}\in[L^2(\Omega)]^d  &\quad  \mbox{if $ \Omega $ is convex.}
\end{split}
\end{equation}
Moreover, we assume the forcing $ f $ is identically zero in this section.  If this is not the case, then a simple change of variable as in \cite[pg.\ 3623]{AMPR-2015} can be used to eliminate the forcing.

\subsection{Study of the state equation}
Notice that for data $u\in L^2(\Gamma)$, we cannot expect to have a variational solution of the state equation \eqref{Ori_problem}. Therefore, we need a suitable concept of solution that makes the control-to-state operator continuous and that coincides with the variational solution for regular data.
Moreover, since HDG is based on a mixed formulation, it is also important to see how this concept of very weak solution extends to mixed formulations.

To define the concept of a very weak solution, we first introduce the adjoint problem and recall its regularity properties.
\begin{lemma}\label{ML1}For every $g\in L^2(\Omega)$ there exists a unique $z_g\in H^1_0(\Omega)\cap H^{t}(\Omega)$ for all $t\leq 2$ with $t<1+\pi/\omega$ such that
	\begin{equation}\label{ME01}
	-\Delta z_g -\nabla\cdot (\bm{\beta} z_g) = g\mbox{ in }\Omega,\ z_g = 0\mbox{ on }\Gamma.
	\end{equation}
	Moreover, $\partial_{\bm n} z_g\in H^{s}(\Gamma)$ for all $s\leq 1/2$ such that $s<\pi/\omega-1/2.$
	
	If, further, $g\in H^{t^*}(\Omega)$ for some $0\leq t^*<1$, then $z_g\in H^1_0(\Omega)\cap H^{t}(\Omega)$ for all $t\leq 2+t^*$ with $t<\min\{3,1+\pi/\omega\}$ and
	$\partial_{\bm n} z_g\in H^{s}(\Gamma)$ for all $s\leq 1/2+t^*$ such that $s<\min\{3/2,\pi/\omega-1/2\}.$
\end{lemma}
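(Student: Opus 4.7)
The plan is to combine a Lax--Milgram argument on $H^1_0(\Omega)$ with Grisvard-type corner-singularity regularity for the Dirichlet Laplacian on a 2D polygon, and then to bootstrap using the multiplier structure of the convection and reaction terms.

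For existence and uniqueness, testing \eqref{ME01} against $v\in H^1_0(\Omega)$ and integrating by parts leads to the bilinear form $a(z,v)=\innOm{\nabla z}{\nabla v}+\innOm{\bm\beta z}{\nabla v}$. Continuity is immediate from $\bm\beta\in[L^\infty(\Omega)]^d$, and coercivity follows from
$$a(v,v)=\|\nabla v\|_{0,\Omega}^2+\tfrac12\innOm{\bm\beta}{\nabla(v^2)}=\|\nabla v\|_{0,\Omega}^2-\tfrac12\innOm{(\nabla\cdot\bm\beta) v}{v}\geq \|\nabla v\|_{0,\Omega}^2,$$
using $v|_\Gamma=0$, the sign condition $\nabla\cdot\bm\beta\leq 0$ from \eqref{eqn:beta_assumptions1}, and Poincar\'e. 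Lax--Milgram then delivers a unique $z_g\in H^1_0(\Omega)$.

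For the base regularity, I rewrite \eqref{ME01} as $-\Delta z_g=g+\bm\beta\cdot\nabla z_g+(\nabla\cdot\bm\beta)z_g$. The right-hand side lies in $L^2(\Omega)$ because $\bm\beta,\nabla\cdot\bm\beta\in L^\infty(\Omega)$ and $z_g\in H^1(\Omega)$, so the Grisvard regularity theorem for the Dirichlet problem for $-\Delta$ on a 2D polygon with largest interior angle $\omega$ gives $z_g\in H^{t}(\Omega)$ for every $t\leq 2$ with $t<1+\pi/\omega$. Combined with $\Delta z_g\in L^2(\Omega)$, edge-by-edge trace theory on $\Gamma$ then puts $\partial_{\bm n} z_g$ into $H^{t-3/2}(\Gamma)$, which is exactly the range $s\leq 1/2$, $s<\pi/\omega-1/2$ announced in the lemma.

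For $g\in H^{t^*}(\Omega)$ I iterate the previous step after expanding $\nabla\cdot(\bm\beta z_g)=(\nabla\cdot\bm\beta)z_g+\bm\beta\cdot\nabla z_g$ and applying Sobolev multiplier and product estimates, so that the new right-hand side of $-\Delta z_g$ lies in $H^{t^*}(\Omega)$ up to the corner cap. For nonconvex $\Omega$ the ceiling $1+\pi/\omega<2$ is reached already with the $L^\infty$ bounds on $\bm\beta$ and $\nabla\cdot\bm\beta$; for convex $\Omega$, lifting past $H^2$ toward $H^3$ requires $\nabla\cdot\bm\beta\in H^1(\Omega)$, which is precisely the extra hypothesis in \eqref{eqn:beta_assumptions1}. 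Re-applying Grisvard yields $z_g\in H^{t}(\Omega)$ for every $t\leq 2+t^*$ with $t<\min\{3,1+\pi/\omega\}$, and a further trace step gives the refined $\partial_{\bm n}z_g\in H^s(\Gamma)$. The main obstacle I anticipate is exactly this bootstrap: the Sobolev-multiplier estimates for $\bm\beta\cdot\nabla z_g$ and $(\nabla\cdot\bm\beta)z_g$ must be carried out on the fractional scale that is admissible on a polygonal domain, and the regularity must be raised just through the corner threshold $1+\pi/\omega$ without losing the endpoint. The hypotheses collected in \eqref{eqn:beta_assumptions1} are tailored precisely so that this can be done.
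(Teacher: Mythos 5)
Your overall skeleton matches the paper's proof: Lax--Milgram with coercivity from $\nabla\cdot\bm\beta\le 0$, rewriting \eqref{ME01} as $-\Delta z_g=g+\nabla\cdot(\bm\beta z_g)$ with an $L^2(\Omega)$ right-hand side, Grisvard's polygonal regularity for the Dirichlet Laplacian, and a bootstrap that uses $\nabla\cdot\bm\beta\in L^\infty(\Omega)$ together with $\nabla\nabla\cdot\bm\beta\in[L^2(\Omega)]^d$ to place $\nabla\cdot(\bm\beta z_g)$ in $H^{t^*}(\Omega)$. (Your caveat about fractional multiplier estimates for $\bm\beta\cdot\nabla z_g$ is fair, but the paper asserts that step at the same level of detail, so it is not the issue.)

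The genuine gap is in the normal-derivative claims. On a polygon, trace theory applied to $z_g\in H^t(\Omega)$ yields only edge-wise regularity, $\partial_{\bm n}z_g\in \Pi_{i=1}^m H^{t-3/2}(\Gamma_i)$; your statement that ``edge-by-edge trace theory puts $\partial_{\bm n}z_g$ into $H^{t-3/2}(\Gamma)$'' silently identifies this product space with the global space $H^{s}(\Gamma)$, and that identification is valid only for $s<1/2$. At the endpoint $s=1/2$, which the lemma asserts in the base case whenever $\pi/\omega>1$, the two spaces differ, and the paper invokes a separate result (Lemma (A2) of Casas--Mateos--Raymond) to reach $H^{1/2}(\Gamma)$. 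In the bootstrapped case the claimed range extends to $s<3/2$, where a piecewise-$H^{s}$ function on $\Gamma$ belongs to $H^{s}(\Gamma)$ only if the edge traces are compatible at the corners; since $\bm n$ jumps there, this is not automatic. The paper closes this by proving $\partial_{\bm n}z_g=0$ at the corners: for convex $\Omega$ the bootstrap gives $z_g\in H^{t}(\Omega)$ with $t>2$, hence $z_g\in C^1(\overline\Omega)$, and since $z_g\equiv 0$ on both edges meeting at a corner, both tangential derivatives vanish there, so $\nabla z_g=0$ at the corner and the normal derivative is continuous across corners, hence in $H^{s}(\Gamma)$ (following Casas--G\"unther--Mateos). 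Without this corner argument, or the endpoint lemma for $s=1/2$, your trace step does not deliver the global boundary regularity the lemma asserts --- and that global regularity is precisely what is used later, in the duality pairing defining very weak solutions and in concluding $\bar u=-\gamma^{-1}\bar{\bm p}\cdot\bm n\in H^{s}(\Gamma)$.
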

\begin{proof}
	Existence and uniqueness of the solution is standard. The regularity of $\bm{\beta}$ implies that $\nabla\cdot (\bm{\beta} z_g)\in L^2(\Omega)$, and hence $z_g\in H^t(\Omega)$ for all $t\leq 2$ such that $t<1+\pi/\omega$.
	For the regularity of the normal derivative in the case $s<1/2$,  apply \cite[Corollary 2.3]{AMPR-2015} and trace theory in \cite{Grisvard85}. For $s=1/2$, apply \cite[Lemma (A2)]{Casas-Mateos-Raymond-2009}.
	
	For the extra regularity result, we use that $z_g\in H^t(\Omega)$ for all $t\leq 2$ such that $t<1+\pi/\omega$, $\nabla\cdot\bm{\beta}\in L^\infty(\Omega)$, and $\nabla \nabla \cdot \bm{\beta}\in[L^2(\Omega)]^d$ to obtain that $\nabla\cdot (\bm{\beta} z_g)\in H^{t^*}(\Omega)$. Now we have that $-\Delta z_g\in H^{t^*}(\Omega)$ and standard regularity results in \cite{Grisvard85} lead to $z_g \in H^1_0(\Omega)\cap H^{t}(\Omega)$ for all $t\leq 2+t^*$ with $t<\min\{3,1+\pi/\omega\}$. The normal trace then satisfies that $\partial_n z_g\in \Pi_{i=1}^mH^s(\Gamma_i)$ for all $s\leq 1/2+t^*$ such that $s<\min\{3/2,\pi/\omega-1/2\}$, where $\Gamma_i$ denotes side $i$ of the boundary of $\Omega$. If $\pi/\omega<1$, then $s<1/2$ and
	$\Pi_{i=1}^mH^s(\Gamma_i)=H^s(\Gamma)$. If $\pi/\omega>1$, we use that $z_g=0$ on $\Gamma$ as in \cite[Section 4]{Casas-Gunther-Mateos2011} to prove that $\partial_n z_g=0$ on the corners of the domain. This implies that $\partial_n z_g$ is continuous  and hence it belongs to $H^s(\Gamma)$. 
\end{proof}

\begin{definition}Let $\varepsilon$ be a real number such that $0\leq \varepsilon\leq 1/2$ and $\varepsilon<\pi/\omega-1/2$.
	For     $u\in H^{-\varepsilon}(\Gamma)$, we say that $y\in L^2(\Omega)$ is a very weak solution of
	\begin{equation}\label{ME02}
	-\Delta y +\bm{\beta} \cdot \nabla y = 0\mbox{ in }\Omega,\ y=u\mbox{ on }\Gamma
	\end{equation}
	if and only if
	\begin{equation}\label{M03}
	\innOm{y}{g} + \duaGa{u}{\partial_n z_g} = 0,
	\end{equation}
	for all $g \in L^2(\Omega)$, where $z_g$ is the unique solution of \eqref{ME01}.
\end{definition}
\begin{remark}\label{MR23}The definition is meaningful thanks to the regularity of the normal derivative of $z_g$ provided in \Cref{ML3}. Eventually the case $\varepsilon=1/2$ must be discarded, but we keep it while we can cope with it. For our problem we need only to consider the case $\varepsilon =0$. We include the other cases for the sake of completeness and because the definition may be useful for problems with control or state constraints; see e.g., \cite[Section 6.2]{Mateos-Neitzel2015}
\end{remark}

\begin{lemma}\label{ML3}Let $s$ be a real number such that $-1/2\leq s <3/2$ and $s>1/2-\pi/\omega$.
	For every $u\in H^{s}(\Gamma)$, there exists a unique {\em very weak} solution $y\in H^{1/2+s}(\Omega)$ of \eqref{ME02} and
	\[\|y\|_{H^{1/2+s}(\Omega)}\leq C \|u\|_{H^{s}(\Gamma)}.\]
\end{lemma}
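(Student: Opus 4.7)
The proof naturally splits into three pieces: uniqueness, existence with an $L^2(\Omega)$-estimate by transposition, and a regularity upgrade to $H^{1/2+s}(\Omega)$. Uniqueness is immediate from the definition: if $y\in L^2(\Omega)$ satisfies \eqref{M03} with $u=0$, then $\innOm{y}{g}=0$ for every $g\in L^2(\Omega)$, forcing $y\equiv 0$.

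For existence together with an $L^2$-bound I would invoke the classical Lions--Magenes transposition method. By \Cref{ML1} the map $g\mapsto\partial_{\bm n} z_g$ sends $L^2(\Omega)$ continuously into $H^{s'}(\Gamma)$ whenever $s'\le 1/2$ and $s'<\pi/\omega-1/2$; the hypotheses $-1/2\le s$ and $s>1/2-\pi/\omega$ are precisely what is required to take $s'=-s$, making the duality pairing $\duaGa{u}{\partial_{\bm n} z_g}$ meaningful. Consequently the linear form $g\mapsto -\duaGa{u}{\partial_{\bm n} z_g}$ is continuous on $L^2(\Omega)$ with norm bounded by $C\|u\|_{H^s(\Gamma)}$, and Riesz representation produces a unique $y\in L^2(\Omega)$ satisfying \eqref{M03} together with $\|y\|_{L^2(\Omega)}\le C\|u\|_{H^s(\Gamma)}$.

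To promote this to the sharp $H^{1/2+s}(\Omega)$ estimate I would identify two endpoint maps and interpolate. At $s=1/2$, a standard Lax--Milgram argument for the weak formulation of \eqref{ME02} (coercivity is supplied by $\nabla\cdot\bm\beta\le 0$) yields a variational solution $y\in H^1(\Omega)$ with $\|y\|_{1,\Omega}\le C\|u\|_{1/2,\Gamma}$; integration by parts against $z_g$ verifies that this variational solution also satisfies \eqref{M03}. At $s=-1/2$, the transposition step above already delivers the required $L^2$-bound. Operator interpolation between these two continuous maps then gives boundedness $H^s(\Gamma)\to H^{1/2+s}(\Omega)$ for every $s\in[-1/2,1/2]$. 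For $s\in(1/2,3/2)$, I would instead lift $u$ via the trace theorem to $\tilde u\in H^{s+1/2}(\Omega)$, reduce to a homogeneous Dirichlet problem for $w=y-\tilde u$ with right-hand side $\Delta\tilde u-\bm\beta\cdot\nabla\tilde u\in H^{s-3/2}(\Omega)$, and apply Grisvard-type elliptic regularity on the polygonal domain to obtain $w\in H^{s+1/2}(\Omega)$ with the stated norm bound.

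The main obstacle I anticipate is the interpolation step. One must confirm that the identifications $[H^{-1/2}(\Gamma),H^{1/2}(\Gamma)]_\theta=H^{-1/2+\theta}(\Gamma)$ and $[L^2(\Omega),H^1(\Omega)]_\theta=H^\theta(\Omega)$ hold on a Lipschitz polygon with the duality conventions compatible with $\duaGa{\cdot}{\cdot}$, and a short density argument (approximating $u\in H^s(\Gamma)$ by smooth boundary data and passing to the weak limit using the uniform $H^{1/2+s}(\Omega)$-bound) is needed in order to identify the interpolated solution with the transposition solution constructed in the first step.
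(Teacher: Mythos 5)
Your overall skeleton (transposition at the bottom end, variational solution plus elliptic regularity for $1/2\le s<3/2$, interpolation in between) is the same as the paper's, but there is a genuine gap in how you cover the negative range of $s$ on a general polygon. Your interpolation uses the endpoint $s=-1/2$, i.e.\ boundedness of the solution operator from $H^{-1/2}(\Gamma)$ into $L^2(\Omega)$. That endpoint requires $\partial_{\bm n} z_g\in H^{1/2}(\Gamma)$ for every $g\in L^2(\Omega)$, which by \Cref{ML1} holds only when $\pi/\omega>1$, i.e.\ only for convex $\Omega$. But \Cref{ML3} is stated for a general polygon (convexity is assumed in the paper only after this lemma), and the hypothesis $s>1/2-\pi/\omega$ is there precisely because on a nonconvex polygon the case $s=-1/2$ is excluded. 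For such a domain and $s\in(1/2-\pi/\omega,1/2)$ --- in particular for every admissible negative $s$ --- your argument delivers only $y\in L^2(\Omega)$ with $\|y\|_{L^2(\Omega)}\le C\|u\|_{H^s(\Gamma)}$, not the asserted $H^{s+1/2}(\Omega)$ bound; moving the lower interpolation endpoint to $s=0$ does not help, because plain transposition against $g\in L^2(\Omega)$ gives no more than $L^2(\Omega)$ regularity there either. The paper fills exactly this range by a different device: for $-1/2<s<0$ it invokes the sharper transposition argument of Lemma 2.5 in \cite{AMPR-2015}, where one tests against data rougher than $L^2(\Omega)$ so that the a priori estimate comes out directly in $H^{s+1/2}(\Omega)$, and interpolation is then used only on $0\le s<1/2$, between that case and the variational case $s=1/2$; the classical transposition at $s=-1/2$ is used only in the convex situation, where it is legitimate.

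On a convex polygon your plan is essentially the paper's proof: uniqueness from the definition, transposition via Riesz at $s=-1/2$, Lax--Milgram plus a lifting and Grisvard regularity for $1/2\le s<3/2$ together with integration by parts to check that the variational solution is a very weak solution, and interpolation for the intermediate range. The caveats you flag yourself --- identification of the interpolation scales $[H^{-1/2}(\Gamma),H^{1/2}(\Gamma)]_\theta$ and $[L^2(\Omega),H^1(\Omega)]_\theta$ on the polygonal boundary, the density argument identifying the interpolated solution with the very weak solution, and (for $s>1$) corner compatibility in the trace lifting --- are real but standard. So the repair is local: supply an argument for the nonconvex range $1/2-\pi/\omega<s<0$ (for instance, transposition in duality with data in the dual of $H^{s+1/2}(\Omega)$, as in the lemma the paper cites) and then interpolate with that as the lower endpoint.
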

\begin{proof}
	The case  $s = -1/2$ can only happen in a convex domain and we can use the classic transposition method. The proof for $-1/2<s<0$ is as the the proof of Lemma 2.5 in \cite{AMPR-2015}.
	
	For $1/2\leq s<3/2$ we have that \eqref{ME02} has a unique variational solution $y$ and that it belongs to $H^{s+1/2}(\Omega)$; see \cite[Proof of Corollary 4.2]{AMPR-2015}. Integration by parts shows that $y$ is also a very weak solution.
	
	For $0 \leq s<1/2$ the result follows from interpolation. 
\end{proof}

Next we do the same for the mixed formulation.  From now on, we assume the polygonal domain $ \Omega $ is convex so that $ 1 < \pi/\omega \leq 3 $. First we state an existence and regularity result for the mixed formulation of the convection diffusion equation with regular data.
\begin{lemma}\label{ML25}For every $g\in L^2(\Omega)$, there exists a unique pair $(z_g,\bm p_g)\in H^1_0(\Omega)\times H(\textup{div},\Omega)$ such that
	\begin{subequations}
		\begin{align}
		\innOm{\bm p_g}{\bm r}-\innOm{z_g}{\nabla \cdot \bm r} &= 0, \label{ME1a}\\
		\innOm{\nabla\cdot(\bm p_g-\bm{\beta} z_g)}{w}&= \innOm{g}{w},\label{ME1b}
		\end{align}
	\end{subequations}
	for all $(\bm r, w)\in H(\textup{div},\Omega)\times L^2(\Omega)$. Moreover, $(z_g,\bm p_g)\in (H^2(\Omega)\cap H^1_0(\Omega))\times [H^1(\Omega)]^d$, $\partial_{\bm n}z_g=\bm p_g\cdot\bm n\in H^{1/2}(\Gamma)$ and
	\begin{equation}-\Delta z_g-\nabla\cdot(\bm{\beta} z_g) = g\mbox{ in }\Omega, z_g=0\mbox{ on }\Gamma\mbox{ and }\bm p_g = -\nabla z_g.\label{ME2}
	\end{equation}
	If, further, $g\in H^{t^*}(\Omega)$ for some $0\leq t^*<1$, then $(z_g,\bm p_g)\in (H^{t}(\Omega)\cap H^1_0(\Omega))\times [H^{t-1}(\Omega)]^d$ for all $t\leq 2+t^*$ with $t<\min\{3,1+\pi/\omega\}$ and
	$\partial_{\bm n} z_g=\bm p_g\cdot\bm n\in H^{s}(\Gamma)$ for all $s\leq 1/2+t^*$ such that $s<\min\{3/2,\pi/\omega-1/2\}.$
\end{lemma}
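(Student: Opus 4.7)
The plan is to reduce the mixed system to the primal boundary value problem \eqref{ME2} and then invoke \Cref{ML1} for all the regularity assertions. The overall structure is existence via lifting, uniqueness via the inverse passage, and regularity by direct transfer.

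First I would prove existence and equivalence. Given $g\in L^2(\Omega)$, \Cref{ML1} produces a unique $z_g\in H^1_0(\Omega)\cap H^2(\Omega)$ solving the primal adjoint problem \eqref{ME2}. I set $\bm p_g := -\nabla z_g$, which lies in $[H^1(\Omega)]^d$ and hence in $H(\textup{div},\Omega)$. To verify \eqref{ME1a}, I integrate $(\bm p_g,\bm r)_\Omega = -(\nabla z_g,\bm r)_\Omega$ by parts, using $z_g=0$ on $\Gamma$ to kill the boundary term, obtaining $(z_g,\nabla\cdot\bm r)_\Omega$. Equation \eqref{ME1b} is immediate since $\nabla\cdot\bm p_g = -\Delta z_g$ and $-\Delta z_g - \nabla\cdot(\bm\beta z_g) = g$ pointwise in $L^2(\Omega)$.

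Next I would establish uniqueness. Suppose $(z_g,\bm p_g)\in H^1_0(\Omega)\times H(\textup{div},\Omega)$ satisfies \eqref{ME1a}--\eqref{ME1b} with $g=0$. Choosing $\bm r\in [\mathcal D(\Omega)]^d$ in \eqref{ME1a} shows $\bm p_g=-\nabla z_g$ in the distributional sense, and since $\bm p_g\in [L^2(\Omega)]^d$ we get $z_g\in H^1(\Omega)$ with $\bm p_g=-\nabla z_g$; taking $\bm r\in H(\textup{div},\Omega)$ general and integrating by parts recovers the trace condition $z_g=0$ on $\Gamma$ (so $z_g\in H^1_0(\Omega)$ is consistent). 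Equation \eqref{ME1b} then yields $-\Delta z_g - \nabla\cdot(\bm\beta z_g)=0$ weakly in $H^{-1}(\Omega)$, and the uniqueness part of \Cref{ML1} forces $z_g=0$, whence $\bm p_g=0$.

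Finally I would transfer regularity. Since $\bm p_g=-\nabla z_g$, every Sobolev estimate on $z_g$ provided by \Cref{ML1} translates, losing one derivative, to $\bm p_g$: the base case gives $z_g\in H^2(\Omega)\cap H^1_0(\Omega)$ and $\bm p_g\in [H^1(\Omega)]^d$; for $g\in H^{t^*}(\Omega)$ with $0\leq t^*<1$ the extra regularity of \Cref{ML1} gives $z_g\in H^t(\Omega)$ with $t\leq 2+t^*$ and $t<\min\{3,1+\pi/\omega\}$, hence $\bm p_g\in [H^{t-1}(\Omega)]^d$. The normal trace identity $\bm p_g\cdot\bm n = -\partial_{\bm n}z_g$ on $\Gamma$ is meaningful in $H^{1/2}(\Gamma)$ in the base case and inherits the sharper $H^s(\Gamma)$-bound from \Cref{ML1}, with $s\leq 1/2+t^*$ and $s<\min\{3/2,\pi/\omega-1/2\}$.

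The only delicate point is the uniqueness step, specifically recovering $z_g\in H^1_0(\Omega)$ and the identity $\bm p_g=-\nabla z_g$ purely from \eqref{ME1a}; once that link between the mixed and primal formulations is secured, the rest is essentially a translation of \Cref{ML1}. Everything else is a routine integration by parts in a convex polygon, where $z_g\in H^2(\Omega)$ makes all the traces well defined.
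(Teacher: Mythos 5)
Your proposal is correct and takes essentially the same route as the paper: the paper's own proof is just the observation that the mixed system \eqref{ME1a}--\eqref{ME1b} is equivalent to the primal problem \eqref{ME01} via $\bm p_g=-\nabla z_g$, so the lemma is a straightforward consequence of \Cref{ML1}. You simply spell out the existence/uniqueness equivalence and the derivative-loss transfer of regularity in more detail, which is consistent with the paper's argument.
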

Notice that the notation $z_g$ is not contradictory. If $z_g$ is the solution of $\eqref{ME01}$, then $(z_g,-\nabla z_g)$ is the solution of \eqref{ME1a}--\eqref{ME1b}. Also, if
$(z_g,\bm p_g)$ is the solution of \eqref{ME1a}--\eqref{ME1b}, then $z_g$ is the solution of $\eqref{ME01}$. Therefore this lemma is a straightforward consequence of \Cref{ML1}

Now we must drop the case $\varepsilon=1/2$.
\begin{definition}\label{MD26}Let  $\varepsilon$ be a real number such that
	\[0\leq \varepsilon<1/2.\]
	For $u\in H^{-\varepsilon}(\Gamma)$, we say that $(y,\bm q)\in L^2(\Omega)\times [H^1(\Omega)^*]^d$ is a very weak solution
	of
	\begin{equation}-\Delta y +\bm{\beta}\cdot\nabla y= 0\mbox{ in }\Omega,\ y= u\mbox{ in }\Gamma,\ \bm q=-\nabla y\label{ME3}\end{equation}
	if and only if
	\begin{subequations}
		\begin{align}
		\duaOm{\bm q}{\bm r} -\innOm{y}{\nabla\cdot \bm r} + \duaGa{u}{\bm r\cdot \bm n} &= 0,\label{ME4a}\\
		\duaOm{\bm q+\bm{\beta} y}{\bm p_g} - \innOm{y \nabla\cdot\bm{\beta}}{z_g}  &= 0,\label{ME4b}
		\end{align}
	\end{subequations}
	for all $(\bm r, g)\in [H^1(\Omega)]^d\times L^2(\Omega) $, and $(z_g,\bm p_g)\in (H^2(\Omega)\cap H^1_0(\Omega))\times [H^1(\Omega)]^d$ is the unique solution of \eqref{ME1a}--\eqref{ME1b}.
\end{definition}
\begin{remark} For $\varepsilon=1/2$, the expression $\duaGa{u}{\bm r\cdot \bm n}$ is  meaningless even for $\bm r\in [C^\infty(\Omega)]^d$, since $\bm n$ has jump derivatives, and hence $\bm r\cdot\bm n\not\in H^{1/2}(\Gamma)$.
\end{remark}

\begin{theorem}\label{MT28}For every $u\in H^{-\varepsilon}(\Gamma)$, there exists a unique {\em very weak} solution $(y,\bm q)\in L^2(\Omega)\times [H^1(\Omega)^*]^d$ of \eqref{ME3}.
	Moreover, $(y,\bm q)\in H^{1/2-\varepsilon}(\Omega)\times [H^{1/2+\varepsilon}(\Omega)^*]^d$ and $y$ is the very weak solution of \eqref{ME02}.
\end{theorem}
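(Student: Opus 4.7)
The plan is to reduce \Cref{MT28} to the scalar very weak problem already settled in \Cref{ML3}. Applying that lemma with $s=-\varepsilon$ produces a unique $y\in H^{1/2-\varepsilon}(\Omega)$ satisfying \eqref{M03}; this simultaneously yields the first component of the pair, the announced $H^{1/2-\varepsilon}$ regularity, and the fact that $y$ is a very weak solution of \eqref{ME02}. What remains is to build the flux $\bm q$, check the two mixed identities, and establish the regularity $\bm q\in [H^{1/2+\varepsilon}(\Omega)^*]^d$.

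Given $y$, I would define $\bm q$ so that \eqref{ME4a} becomes a tautology: for $\bm r\in [H^1(\Omega)]^d$ set
\[ \duaOm{\bm q}{\bm r} := \innOm{y}{\nabla \cdot \bm r} - \duaGa{u}{\bm r \cdot \bm n}. \]
The right-hand side is a bounded functional on $[H^1(\Omega)]^d$ because $y\in L^2(\Omega)$, $\nabla\cdot\bm r\in L^2(\Omega)$, and $\bm r\cdot\bm n \in H^{1/2}(\Gamma)\hookrightarrow H^\varepsilon(\Gamma)$, so $\bm q\in [H^1(\Omega)^*]^d$. To verify \eqref{ME4b}, I would insert $\bm r=\bm p_g$ from \Cref{ML25}. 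Using $\bm p_g\cdot\bm n=-\partial_n z_g$ and $\nabla\cdot\bm p_g = g+z_g\,\nabla\cdot\bm\beta-\bm\beta\cdot\bm p_g$, which follows from \eqref{ME1b} together with $\bm p_g=-\nabla z_g$, the defining identity rearranges into
\[ \duaOm{\bm q+\bm\beta y}{\bm p_g} - \innOm{y\,\nabla\cdot\bm\beta}{z_g} = \innOm{y}{g} + \duaGa{u}{\partial_n z_g}, \]
and the right-hand side vanishes exactly by \eqref{M03}. Each step in this rearrangement is reversible, so conversely any very weak solution of \eqref{ME3} forces $y$ to satisfy \eqref{M03}; combined with the uniqueness in \Cref{ML3} and the fact that \eqref{ME4a} determines $\bm q$ once $y$ and $u$ are fixed, this yields uniqueness of the pair.

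The step I expect to be the main technical obstacle is the sharper regularity $\bm q\in [H^{1/2+\varepsilon}(\Omega)^*]^d$. Since $[H^1(\Omega)]^d$ is dense in $[H^{1/2+\varepsilon}(\Omega)]^d$, it suffices to bound the functional $\bm r\mapsto\duaOm{\bm q}{\bm r}$ by $C\|\bm r\|_{H^{1/2+\varepsilon}(\Omega)}$ on the smaller space. The boundary piece is controlled by trace theory on Lipschitz domains: $\|\bm r\cdot\bm n\|_{H^\varepsilon(\Gamma)}\leq C\|\bm r\|_{H^{1/2+\varepsilon}(\Omega)}$ gives $|\duaGa{u}{\bm r\cdot\bm n}|\leq C\|u\|_{H^{-\varepsilon}(\Gamma)}\|\bm r\|_{H^{1/2+\varepsilon}(\Omega)}$. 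For the interior piece I would use the duality between $H^{1/2-\varepsilon}(\Omega)$ and its dual: $\nabla\cdot\bm r$ sits in $H^{-(1/2-\varepsilon)}(\Omega)$ with norm controlled by $\|\bm r\|_{H^{1/2+\varepsilon}(\Omega)}$, and for $0<\varepsilon<1/2$ the identification $H^{1/2-\varepsilon}(\Omega) = H^{1/2-\varepsilon}_0(\Omega)$ yields $|\innOm{y}{\nabla\cdot\bm r}|\leq C\|y\|_{H^{1/2-\varepsilon}(\Omega)}\|\bm r\|_{H^{1/2+\varepsilon}(\Omega)}$. The borderline case $\varepsilon=0$ sits at the threshold for both trace regularity and the Lions--Magenes self-duality, and I would handle it separately via the classical transposition method already invoked in the proof of \Cref{ML3}, or by an interpolation/limiting argument from the $\varepsilon>0$ estimates.
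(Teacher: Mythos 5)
Your construction is essentially the paper's: existence is obtained by taking $y$ from \Cref{ML3} with $s=-\varepsilon$ and defining $\bm q$ componentwise so that \eqref{ME4a} holds by fiat, and your verification of \eqref{ME4b} --- substituting $\bm r=\bm p_g$, using $\nabla\cdot\bm p_g = g+(\nabla\cdot\bm\beta)\,z_g-\bm\beta\cdot\bm p_g$ from \eqref{ME1b} together with $\bm p_g=-\nabla z_g$, and landing on \eqref{M03} --- is exactly the computation the paper leaves implicit (the sign $\bm p_g\cdot\bm n=-\partial_{\bm n}z_g$ you use is the one consistent with \eqref{M03}; the statement of \Cref{ML25} has an apparent sign slip). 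Your uniqueness argument is a mild reorganization of the paper's: the paper tests the homogeneous system with $g=y$ and $\bm r=\bm p_y$ to get $\innOm{y}{y}=0$ directly, whereas you reverse the same manipulation for every $g$ to show that the $y$-component of any mixed very weak solution satisfies \eqref{M03}, then invoke uniqueness at the scalar level (which is immediate from the definition) and the fact that \eqref{ME4a} pins down $\bm q$; the two arguments are equivalent in substance, and yours has the small advantage of delivering ``$y$ is the very weak solution of \eqref{ME02}'' as part of the same step. Where you go beyond the paper is the claim $\bm q\in[H^{1/2+\varepsilon}(\Omega)^*]^d$: your duality and trace bounds are correct for $0<\varepsilon<1/2$, but the endpoint $\varepsilon=0$ remains only a sketch --- interpolation of the interior estimates cannot reach an endpoint, and the trace map $H^{1/2}(\Omega)\to L^2(\Gamma)$ fails exactly there, while \Cref{MR23} says $\varepsilon=0$ is the case actually needed later; a genuine argument (e.g.\ transposition against the $H^2\times H^1$ regularity of \Cref{ML25}) would be required to close it. In fairness, the paper's written proof does not address this finer regularity of $\bm q$ at all and only exhibits $\bm q\in[H^1(\Omega)^*]^d$, so on the parts the paper actually proves, your proposal is complete and follows the same route.
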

\begin{proof}
	Let us first prove uniqueness of solution in the space $L^2(\Omega)\times [H^1(\Omega)^*]^d$. Take $u=0$ and let $(y,\bm q)\in L^2(\Omega)\times [H^1(\Omega)^*]^d$ be functions satisfying:
	\begin{subequations}
		\begin{align}
		\duaOm{\bm q}{\bm r}  &=\innOm{y}{\nabla\cdot \bm r},\label{ME5a}\\
		\duaOm{\bm q+\bm{\beta} y}{\bm p_g}  -  \innOm{y \nabla\cdot\bm{\beta}}{z_g}&=0,\label{ME5b}
		\end{align}
	\end{subequations}
	for all $(\bm r, g)\in [H^1(\Omega)]^d\times L^2(\Omega) $. Consider $g=y$. From equation \eqref{ME1b} and taking into account that $\bm p_y=-\nabla z_y$, we have that
	\begin{equation}\label{ME9}
	\innOm{y}{\nabla\cdot\bm p_y}+\innOm{\bm{\beta} y}{\bm p_y} - \innOm{y\nabla\cdot\bm{\beta}}{ z_y}=\innOm{y}{y}.
	\end{equation}
	Take $\bm r=\bm p_y$ in \eqref{ME5a}. We obtain
	\[
	\duaOm{\bm q}{\bm p_y} = \innOm{y}{\nabla\cdot \bm p_y}.
	\]
	Substitute this in \eqref{ME5b}
	\begin{equation}\label{ME10}
	\innOm{y}{\nabla\cdot\bm p_y}+\innOm{\bm{\beta} y}{\bm p_y} - \innOm{y\nabla\cdot\bm{\beta}}{ z_y}=0.
	\end{equation}
	From \eqref{ME9} and \eqref{ME10}, it is clear that $y=0$. From \eqref{ME5a} we have that $\bm q=0$ and uniqueness is proved.
	
	\medskip
	
	Existence is as follows. Take $y\in L^2(\Omega)$ the unique {\em very weak} solution of \eqref{ME02} and define $\bm q\in [H^1(\Omega)^*]^d$ by
	\[\duaOm{ q_i}{ r} = \innOm{y}{\partial_{x_i} r}-\duaGa{u}{r n_i},\]
	for all $ r\in H^1(\Omega)$, and $n_i$ is the $i-$th component of the vector $\bm n$. Again, this is well defined because we have made sure that $\varepsilon<1/2$,
	and the functions in $H^{\varepsilon}(\Gamma)$ now can have jump discontinuities.
\end{proof}
\begin{corollary}If $u\in H^{1/2+t^*}(\Gamma)$ for some $0\leq t^*<1$, then $(y,\bm q)\in H^{1+t^*}(\Omega)\times ( [H^{t^*}(\Omega)]^d \cap H(\mathrm{div},\Omega) ) $ and
	\begin{subequations}
		\begin{align}
		\innOm{\bm q}{\bm r} -\innOm{y}{\nabla\cdot \bm r} + \innGa{u}{\bm r\cdot \bm n} &= 0,\label{ME11a}\\
		\innOm{\nabla\cdot(\bm q +\bm{\beta} y)}{w} -\innOm{y\nabla\cdot\bm{\beta}}{w}   &= 0,\label{ME11b}
		\end{align}
		for all $(\bm r,w)\in H(\textup{div},\Omega)\times L^2(\Omega)$.
	\end{subequations}

\end{corollary}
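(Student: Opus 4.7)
The plan is to exit the genuinely weak regime by invoking the variational theory available for $u\in H^{1/2+t^*}(\Gamma)$, and then recover \eqref{ME11a}--\eqref{ME11b} by classical Green's formulas applied to $y$ and $\bm q=-\nabla y$.

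First I would apply \Cref{ML3} with $s=1/2+t^*$. Since $0\leq t^*<1$, we have $1/2\leq s<3/2$, and $s>1/2-\pi/\omega$ is automatic on the convex domain $\Omega$, so the lemma yields $y\in H^{1+t^*}(\Omega)$. In this range, the proof of \Cref{ML3} actually produces $y$ as the unique variational solution of \eqref{ME02}: $y|_\Gamma=u$ in the trace sense and
\[\innOm{\nabla y}{\nabla v}+\innOm{\bm{\beta}\cdot\nabla y}{v}=0\quad\forall\, v\in H^1_0(\Omega),\]
which is equivalent to $-\Delta y+\bm{\beta}\cdot\nabla y=0$ in $\mathcal D'(\Omega)$. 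By \Cref{MT28}, the pair given by $(y,-\nabla y)$ coincides with the very weak solution of Definition \ref{MD26}, so it suffices to check that $\bm q:=-\nabla y$ has the stated regularity and that the new equations hold.

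Next I would upgrade the regularity of $\bm q$. By construction $\bm q\in [H^{t^*}(\Omega)]^d$. Since $\bm{\beta}\in [L^\infty(\Omega)]^d$ and $\nabla y\in [L^2(\Omega)]^d$, the product $\bm{\beta}\cdot\nabla y$ lies in $L^2(\Omega)$, so the distributional identity $\nabla\cdot\bm q=-\Delta y=-\bm{\beta}\cdot\nabla y$ yields $\nabla\cdot\bm q\in L^2(\Omega)$ and hence $\bm q\in H(\mathrm{div},\Omega)$. To obtain \eqref{ME11a}, apply the standard Green identity for the $H^1(\Omega)\times H(\mathrm{div},\Omega)$ pairing to $y$ and any $\bm r\in H(\mathrm{div},\Omega)$:
\[\innOm{\nabla y}{\bm r}+\innOm{y}{\nabla\cdot\bm r}=\innGa{y|_\Gamma}{\bm r\cdot\bm n},\]
where the boundary term is the $H^{1/2}$--$H^{-1/2}$ duality, which legitimately represents the notation $\innGa{u}{\bm r\cdot \bm n}$ because $u\in H^{1/2+t^*}(\Gamma)\subset H^{1/2}(\Gamma)$. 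Substituting $\bm q=-\nabla y$ and $y|_\Gamma=u$ gives \eqref{ME11a}. For \eqref{ME11b}, I would use $\nabla\cdot\bm{\beta}\in L^\infty(\Omega)$ and $y\in H^{1+t^*}(\Omega)$ to write $\bm{\beta} y\in H(\mathrm{div},\Omega)$ with $\nabla\cdot(\bm{\beta} y)=(\nabla\cdot\bm{\beta})y+\bm{\beta}\cdot\nabla y$; combining with $\nabla\cdot\bm q=-\Delta y$ and the PDE gives
\[\nabla\cdot(\bm q+\bm{\beta} y)-y\,\nabla\cdot\bm{\beta}=-\Delta y+\bm{\beta}\cdot\nabla y=0\quad\text{in }L^2(\Omega),\]
which, paired with any $w\in L^2(\Omega)$, produces \eqref{ME11b}.

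The main difficulty here is not depth but careful bookkeeping: at each step one must verify that the extra regularity afforded by $u\in H^{1/2+t^*}(\Gamma)$ is exactly what is needed to interpret the abstract $H^1(\Omega)^*$--$H^1(\Omega)$ duality of \Cref{MD26} as an $L^2(\Omega)$ inner product, and the $H^{-\varepsilon}$--$H^\varepsilon$ boundary duality as the pairing of $u$ against a normal trace in $H^{-1/2}(\Gamma)$. Once these identifications are made, the corollary follows directly from the variational characterization of $y$ and the classical Green formulas.
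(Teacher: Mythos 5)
Your proof is correct and takes essentially the route the paper intends (the corollary is stated there without an explicit proof, as a direct consequence of \Cref{ML3} and \Cref{MT28}): use the variational solution for $s=1/2+t^*$ to get $y\in H^{1+t^*}(\Omega)$, identify $\bm q=-\nabla y$ with the very weak flux by uniqueness, observe $\nabla\cdot\bm q=-\bm\beta\cdot\nabla y\in L^2(\Omega)$ so $\bm q\in H(\mathrm{div},\Omega)$, and recover \eqref{ME11a}--\eqref{ME11b} by Green's formula, with the boundary term read as the $H^{1/2}$--$H^{-1/2}$ pairing. I see no gaps.
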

For the sake of completeness, we say that $(y,\bm q)$ is a very weak solution of the mixed formulation \eqref{ME11a}--\eqref{ME11b} if it is a very weak solution of
\eqref{ME3} in the sense of \Cref{MD26}.
\subsection{Study of the control problem}
Now we are ready to study the control problem. Let us first formulate it using the mixed formulation.
\[\textup{(P)}\min_{u\in L^2(\Gamma)} J(u) = \frac12\|y_u-y_d\|^2_{L^2(\Omega)}+\frac{\gamma}{2}\|u\|^2_{L^2(\Gamma)},\]
where $(y_u,\bm q_u)\in L^2(\Omega)\times [H^1(\Omega)^*]^d$ is the very weak solution of
\begin{subequations}
	\begin{align}
	\innOm{\bm q_u}{\bm r} -\innOm{y_u}{\nabla\cdot \bm r} + \innGa{u}{\bm r\cdot \bm n} &= 0,\label{ME12a}\\
	\innOm{\nabla\cdot(\bm q_u +\bm{\beta} y_u)}{w} -\innOm{y_u\nabla\cdot\bm{\beta}}{w}  &= 0,\label{ME12b}
	\end{align}
	for all $(\bm r,w)\in H(\textup{div},\Omega)\times L^2(\Omega)$.
\end{subequations}
\begin{theorem}\label{MT210}
	Assume $\Omega$ is convex.  If $y_d\in H^{t^*}(\Omega)$ for some $0\leq t^*<1$, then problem \textrm{(P)} has a unique solution $ \bar u\in L^2(\Gamma)$.  Moreover, for any $ s \geq 1/2 $ satisfying $s\leq \frac 12 +t^*$ and $s<\min\{\frac 3 2,\frac{\pi}{\omega}-\frac 1 2\}$, we have $ \bar u\in H^s (\Gamma)$,
	\begin{align*}
		(\bar{\bm p},\bar y,\bar z) &\in [H^{s+\frac 12}(\Omega)]^d\times H^{s+\frac 12}(\Omega)\times (H^{s+\frac 32}(\Omega)\cap H_0^1(\Omega)),\\
		\bar {\bm q} &\in  [H^{s-\frac 12}(\Omega)]^d \cap H(\mathrm{div},\Omega),
	\end{align*}
	and $\partial_{\bm n} \bar z = \bar {\bm p} \cdot\bm n\in H^{s}(\Gamma)$ such that
	\begin{subequations}
		\begin{align}
		\innOm{\bar {\bm q}}{\bm r} -\innOm{\bar y}{\nabla\cdot \bm r} + \innGa{\bar u}{\bm r\cdot \bm n} &= 0,\label{mixed_a}\\
		\innOm{\nabla\cdot(\bar {\bm q}+\bm{\beta} \bar y)}{w} -\innOm{\bar y\nabla\cdot\bm{\beta}}{w}  &= 0,\label{mixed_b}\\
		\innOm{\bar {\bm p}}{\bm r}-\innOm{\bar z}{\nabla\cdot \bm r} &= 0,\label{mixed_c}\\
		\innOm{\nabla\cdot(\bar {\bm p}-\bm{\beta} \bar z)}{w} &= \innOm{\bar y- {y_d}}{w},\label{mixed_d}\\
		\innGa{\gamma \bar u+\bar{\bm p}\cdot\bm n}{v} &=  0,\label{mixed_e}
		\end{align}
		for all $(\bm r,w,v)\in H(\textup{div},\Omega)\times L^2(\Omega)\times L^2(\Gamma)$.
	\end{subequations}
\end{theorem}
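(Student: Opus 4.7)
The plan is to proceed in four stages. First, establish existence and uniqueness of $\bar u\in L^2(\Gamma)$ by classical convex optimization. Second, derive the first-order necessary condition, which couples the control to the adjoint normal flux. Third, bootstrap between the state regularity result of \Cref{MT28} and the adjoint regularity result of \Cref{ML25} to raise the Sobolev index of $\bar u$ up to the stated ceiling. Fourth, read off the asserted regularities of $\bar y,\bar{\bm q},\bar z,\bar{\bm p}$ and verify the mixed system \eqref{mixed_a}--\eqref{mixed_e}.

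Existence and uniqueness are immediate: by \Cref{MT28} the control-to-state map $u\mapsto y_u$ is affine and continuous from $L^2(\Gamma)$ to $L^2(\Omega)$, so $J$ is strictly convex, continuous, and coercive on $L^2(\Gamma)$ (thanks to the Tikhonov term with $\gamma>0$), giving a unique minimizer $\bar u$. To derive the optimality condition I would introduce the adjoint pair $(\bar z,\bar{\bm p})$ as the solution of \eqref{ME1a}--\eqref{ME1b} with right-hand side $g=\bar y-y_d\in L^2(\Omega)$, so that $(\bar z,\bar{\bm p})\in (H^2(\Omega)\cap H^1_0(\Omega))\times [H^1(\Omega)]^d$ and $\bar{\bm p}\cdot\bm n=-\partial_{\bm n}\bar z\in H^{1/2}(\Gamma)$ by \Cref{ML25}. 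For $v\in L^2(\Gamma)$, write the state sensitivity as $y_v$, the very weak solution of \eqref{ME02} with boundary datum $v$ (linearity and uniqueness from \Cref{ML3} justify this). The defining identity \eqref{M03}, applied with test datum $g=\bar y-y_d$, gives
\[
\innOm{y_v}{\bar y-y_d} \;=\; -\duaGa{v}{\partial_{\bm n}\bar z} \;=\; \innGa{v}{\bar{\bm p}\cdot\bm n},
\]
so that
\[
J'(\bar u)v \;=\; \innOm{\bar y-y_d}{y_v} + \gamma\innGa{\bar u}{v} \;=\; \innGa{v}{\gamma\bar u+\bar{\bm p}\cdot\bm n}.
\]
Setting this to zero for every $v\in L^2(\Gamma)$ yields \eqref{mixed_e} and already forces $\bar u\in H^{1/2}(\Gamma)$.

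For the regularity bootstrap, the corollary of \Cref{MT28} with exponent $0$ now gives $\bar y\in H^1(\Omega)$, so $\bar y-y_d\in H^{t^*}(\Omega)$ because $t^*<1$. The extra-regularity part of \Cref{ML25} applied to this datum produces $\bar{\bm p}\cdot\bm n\in H^{s}(\Gamma)$ for all admissible $s\le 1/2+t^*$ with $s<\min\{3/2,\pi/\omega-1/2\}$, and the identity $\bar u=-\gamma^{-1}\bar{\bm p}\cdot\bm n$ transfers this regularity to the control. No further iteration is needed, since the datum for the adjoint has already reached the ceiling imposed by $y_d$. A final call to the corollary of \Cref{MT28} with $t^*$ replaced by $s-1/2$ delivers $\bar y\in H^{s+1/2}(\Omega)$, $\bar{\bm q}\in [H^{s-1/2}(\Omega)]^d\cap H(\textup{div},\Omega)$, and equations \eqref{mixed_a}--\eqref{mixed_b}, while \Cref{ML25} applied to the adjoint datum $\bar y-y_d\in H^{t^*}(\Omega)$ delivers $\bar z\in H^{s+3/2}(\Omega)\cap H_0^1(\Omega)$, $\bar{\bm p}\in[H^{s+1/2}(\Omega)]^d$ with the asserted normal trace, and equations \eqref{mixed_c}--\eqref{mixed_d}.

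The main obstacle is the duality computation in the derivative of $J$: the sensitivity $y_v$ lies only in $H^{1/2-\varepsilon}(\Omega)$, so the natural integration by parts of $\innOm{\bar y-y_d}{y_v}$ against the boundary trace $v$ has no classical meaning. This is precisely the role of \eqref{M03}, which provides the required identity via the $H^{-1/2}(\Gamma)$--$H^{1/2}(\Gamma)$ duality pairing $\duaGa{v}{\partial_{\bm n}\bar z}$; one must also verify that $y_v$ is in fact the very weak solution associated to $v$, which follows from linearity of \eqref{ME02} and the uniqueness in \Cref{ML3}. Once that identity is secured, the rest of the argument is a clean bookkeeping of the regularity lemmas and the mixed formulations of \Cref{ML25} and the corollary after \Cref{MT28}.
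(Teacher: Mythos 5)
Your proposal is correct and follows essentially the same route as the paper: existence and uniqueness by strict convexity and coercivity, the optimality condition $\gamma\bar u+\bar{\bm p}\cdot\bm n=0$ identifying the control with the adjoint normal flux, and a single bootstrap alternating \Cref{ML25} (extra regularity of the adjoint and its normal trace for data in $H^{t^*}(\Omega)$) with \Cref{MT28} and its corollary for the state and flux. Your explicit computation of $J'(\bar u)v$ via the very weak duality identity \eqref{M03} merely spells out the ``standard argument'' the paper leaves implicit, and your intermediate step $\bar y\in H^{1}(\Omega)$ is all that is needed (the paper's claim of $H^{3/2}(\Omega)$ there is stronger than what \Cref{ML3} gives with $s=1/2$, but immaterial since $t^*<1$ is the limiting regularity).
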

\begin{proof}
	The functional $J(u)$ is bounded from below and strictly convex, because  thanks to \Cref{MT28} and \Cref{ML3}, the control-to-state mapping is linear continuous. Using that it is also coercive,  existence of solution follows from the standard argument  of taking a minimizing sequence. Uniqueness of solution follows from the strict convexity.
	
	Since the equation is linear, the functional is differentiable (it is $C^\infty$ indeed) and a standard argument leads to the necessary optimality conditions \eqref{mixed_a}--\eqref{mixed_e}, where the first two equations must be understood in the very weak sense of \Cref{MD26}. Since the problem is strictly convex, these conditions are also sufficient, and therefore the optimality system has a unique solution.
	
	Let us study the regularity of the solution. We already have that $\bar y\in L^2(\Omega)$, so \Cref{ML25} leads in a first step to
	$(\bar z,\bar {\bm p})\in (H^{2}(\Omega)\cap H^1_0(\Omega))\times [H^1(\Omega)]^d$, $\partial_{\bm n}\bar z=\bar {\bm p}\cdot\bm n \in H^{1/2}(\Gamma)$. Noticing that from \eqref{mixed_e} we have that $\bar u=-\gamma^{-1}\bar{\bm p}\cdot\bm n$, it is clear that the optimal control satisfies $\bar u\in H^{1/2}(\Gamma)$. From \Cref{ML3} we deduce $\bar y\in H^{3/2}(\Omega).$
	
	Using the just deduced regularity of $\bar y$ and bootstrapping the argument once, we achieve the desired result.
\end{proof}

\section{HDG Formulation and Implementation}
\label{sec:HDG}

Next, we describe a new HDG method to approximate the solution of the mixed weak form of the optimality system \eqref{mixed_a}--\eqref{mixed_e}.  Throughout this section, we assume $ \Omega $ is a polyhedral domain, not necessarily convex, with $ d \geq 2 $.

Before we introduce the HDG method, we first set some notation.  Let $\mathcal{T}_h$ be a collection of disjoint elements that partition $\Omega$.  We denote by $\partial \mathcal{T}_h$ the set $\{\partial K: K\in \mathcal{T}_h\}$. For an element $K$ of the collection  $\mathcal{T}_h$, let $e = \partial K \cap \Gamma$ denote the boundary face of $ K $ if the $d-1$ Lebesgue measure of $e$ is non-zero. For two elements $K^+$ and $K^-$ of the collection $\mathcal{T}_h$, let $e = \partial K^+ \cap \partial K^-$ denote the interior face between $K^+$ and $K^-$ if the $d-1$ Lebesgue measure of $e$ is non-zero. Let $\varepsilon_h^o$ and $\varepsilon_h^{\partial}$ denote the set of interior and boundary faces, respectively. We denote by $\varepsilon_h$ the union of  $\varepsilon_h^o$ and $\varepsilon_h^{\partial}$. We finally introduce
\begin{align*}
(w,v)_{\mathcal{T}_h} = \sum_{K\in\mathcal{T}_h} (w,v)_K,   \quad\quad\quad\quad\left\langle \zeta,\rho\right\rangle_{\partial\mathcal{T}_h} = \sum_{K\in\mathcal{T}_h} \left\langle \zeta,\rho\right\rangle_{\partial K}.
\end{align*}

Let $\mathcal{P}^k(D)$ denote the set of polynomials of degree at most $k$ on a domain $D$.  We introduce the following discontinuous finite element spaces
\begin{align}
\bm{V}_h  &:= \{\bm{v}\in [L^2(\Omega)]^d: \bm{v}|_{K}\in [\mathcal{P}^k(K)]^d, \forall K\in \mathcal{T}_h\},\\
{W}_h  &:= \{{w}\in L^2(\Omega): {w}|_{K}\in \mathcal{P}^{k+1}(K), \forall K\in \mathcal{T}_h\},\\
{M}_h  &:= \{{\mu}\in L^2(\mathcal{\varepsilon}_h): {\mu}|_{e}\in \mathcal{P}^{k+1}(e), \forall e\in \varepsilon_h\}
\end{align}
for the flux variables, scalar variables, and boundary trace variables, respectively.  Note that the polynomial degree for the scalar and boundary trace variables is one order higher than the polynomial degree for the flux variables.  As far as the authors are aware, this combination of spaces has not been used for an HDG method in the literature.  The boundary trace variables will be used to eliminate the state and flux variables from the coupled global equations, thus substantially reducing the number of degrees of freedom.

Let  $M_h(o)$ and $M_h(\partial)$  denote the subspaces of $M_h$ consisting of $e\in \varepsilon_h^o$ and  $e\in \varepsilon_h^{\partial}$, respectively. Note that $M_h$ consists of functions which are continuous inside the faces (or edges) $e\in \varepsilon_h$ and discontinuous at their borders. In addition, for any functions $w\in W_h$ and $\bm r\in \bm V_h$ we use $\nabla w$ and $ \nabla \cdot \bm r $ to denote the gradient of $ w $ and the divergence of $ \bm r $ taken piecewise on each element $K\in \mathcal T_h$.

\subsection{The HDG Formulation}

To approximate the solution of the mixed weak form \eqref{mixed_a}-\eqref{mixed_e} of the optimality system, the HDG method seeks approximate fluxes ${\bm{q}}_h,{\bm{p}}_h \in \bm{V}_h $, states $ y_h, z_h \in W_h $, interior element boundary traces $ \widehat{y}_h^o,\widehat{z}_h^o \in M_h(o) $, and  boundary control $ u_h \in M_h(\partial)$ satisfying
\begin{subequations}\label{HDG_discrete2}
	\begin{align}
	(\bm q_h,\bm r_1)_{\mathcal T_h}-( y_h,\nabla\cdot\bm r_1)_{\mathcal T_h}+\langle \widehat y_h^o,\bm r_1\cdot\bm n\rangle_{\partial\mathcal T_h\backslash \varepsilon_h^\partial}+\langle  u_h,\bm r_1\cdot\bm n\rangle_{\varepsilon_h^\partial}&=0, \label{HDG_discrete2_a}\\
	-(\bm q_h+\bm \beta y_h,  \nabla w_1)_{\mathcal T_h}-(\nabla\cdot\bm\beta y_h,w_1)_{\mathcal T_h} +\langle\widehat {\bm q}_h\cdot\bm n,w_1\rangle_{\partial\mathcal T_h} \quad \nonumber \\
	+\langle \bm \beta\cdot\bm n  u_h,w_1\rangle_{\varepsilon_h^\partial}   &= ( f, w_1)_{\mathcal T_h},  \label{HDG_discrete2_b}
	\end{align}
	for all $(\bm{r}_1, w_1)\in \bm{V}_h\times W_h$,
	\begin{align}
	(\bm p_h,\bm r_2)_{\mathcal T_h}-(z_h,\nabla\cdot\bm r_2)_{\mathcal T_h}+\langle \widehat z_h^o,\bm r_2\cdot\bm n\rangle_{\partial\mathcal T_h\backslash\varepsilon_h^\partial}&=0,\label{HDG_discrete2_c}\\
	-(\bm p_h-\bm \beta z_h, \nabla w_2)_{\mathcal T_h}+\langle\widehat{\bm p}_h\cdot\bm n,w_2\rangle_{\partial\mathcal T_h} -\langle\bm \beta\cdot\bm n\widehat z_h^o,w_2\rangle_{\partial\mathcal T_h\backslash \varepsilon_h^\partial}\quad \nonumber \\
	 - ( y_h, w_2)_{\mathcal T_h} &=-(y_d, w_2)_{\mathcal T_h}, \label{HDG_discrete2_d}
	\end{align}
	for all $(\bm{r}_2, w_2)\in \bm{V}_h\times W_h$,
	\begin{align}
	\langle\widehat {\bm q}_h\cdot\bm n+\bm \beta\cdot\bm n\widehat y_h^o,\mu_1\rangle_{\partial\mathcal T_h\backslash\varepsilon^{\partial}_h}&=0,\label{HDG_discrete2_e}
	\end{align}
	for all $\mu_1\in M_h(o)$,
	\begin{align}
	\langle\widehat{\bm p}_h\cdot\bm n-\bm \beta\cdot\bm n\widehat z_h^o,\mu_2\rangle_{\partial\mathcal T_h\backslash\varepsilon^{\partial}_h}&=0,\label{HDG_discrete2_f}
	\end{align}
	for all $\mu_2\in M_h(o)$, and the optimality condition
	\begin{align}
	\langle u_h, \mu_3 \rangle_{{\varepsilon_h^{\partial}}}+\langle \gamma^{-1}\widehat{\bm{p}}_h\cdot \bm{n}, \mu_3\rangle_{{{\varepsilon_h^{\partial}}}} &=0, \label{HDG_discrete2_g}
	\end{align}
	for all $\mu_3\in M_h(\partial)$.
	
	The numerical traces on $\partial\mathcal{T}_h$ are defined as
	\begin{align}
	\widehat{\bm{q}}_h\cdot \bm n &=\bm q_h\cdot\bm n+h^{-1} (y_h-\widehat y_h^o) + \tau_1(y_h-\widehat y_h^o)   \qquad ~~~ \mbox{on} \; \partial \mathcal{T}_h\backslash\varepsilon_h^\partial, \label{HDG_discrete2_h}\\
	\widehat{\bm{q}}_h\cdot \bm n &=\bm q_h\cdot\bm n+h^{-1} (y_h-u_h) +  \tau_1(y_h-u_h)  \quad~~~~~~ \mbox{on}\;  \varepsilon_h^\partial, \label{HDG_discrete2_i}\\
	\widehat{\bm{p}}_h\cdot \bm n &=\bm p_h\cdot\bm n+h^{-1}(z_h-\widehat z_h^o) + \tau_2(z_h-\widehat y_h^o) \qquad~~~ \mbox{on} \; \partial \mathcal{T}_h\backslash\varepsilon_h^\partial,\label{HDG_discrete2_j}\\
	\widehat{\bm{p}}_h\cdot \bm n &=\bm p_h\cdot\bm n+h^{-1} z_h+\tau_2 z_h\qquad\qquad\qquad\qquad~\mbox{on}\;  \varepsilon_h^\partial,\label{HDG_discrete2_k}
	\end{align}
\end{subequations}
where $\tau_1$ and $\tau_2$ are stabilization functions defined on $\partial \mathcal T_h$.   This completes the formulation of the HDG method.

To guarantee the stability for existing HDG methods, the stabilization functions $\tau_1$ and $\tau_2$ for the (uncoupled) convection diffusion equation and the dual problem are chosen to satisfy
$$
\tau_1  \geq  \frac{1}{2} \bm \beta \cdot \bm n,  \quad  \tau_2  \geq  -\frac{1}{2} \bm \beta \cdot \bm n
$$
on $ \partial \mathcal{T}_h $; see, e.g., \cite{ChenCockburn12,ChenCockburn14,MR3440284,FuQiuZhang15}.  However, in our convergence analysis in \Cref{sec:analysis} for the fully coupled optimality system we require the stabilization functions to be chosen very specifically.  These requirements on the stabilization functions arise naturally in our analysis.

\subsection{Implementation}
\label{sec:implementation}
For the HDG implementation, we proceed similarly to our earlier work \cite{HuShenSinglerZhangZheng_HDG_Dirichlet_control1}.  A fundamental aspect of the HDG method is the local solver, which reduces the number of globally coupled unknowns.  The standard approach is to implement the local solver element-by-element independently and then assemble the global system.  As in \cite{HuShenSinglerZhangZheng_HDG_Dirichlet_control1}, here we first assemble a large global system and then reduce the size of the system using simple block-diagonal matrix operations.  This process is equivalent to the standard approach.

Substitute \eqref{HDG_discrete2_h}-\eqref{HDG_discrete2_k} into \eqref{HDG_discrete2_a}-\eqref{HDG_discrete2_g} and perform some simple manipulations to obtain
\[ ({\bm{q}}_h,{\bm{p}}_h, y_h,z_h,{\widehat{y}}_h^o,{\widehat{z}}_h^o,u_h)\in \bm{V}_h\times\bm{V}_h\times W_h \times W_h\times M_h(o)\times M_h(o)\times M_h(\partial) \]
is the solution of the following weak formulation:
\begin{subequations}\label{imple}
	\begin{align}
	(\bm{q}_h, \bm{r_1})_{{\mathcal{T}_h}}- (y_h, \nabla\cdot \bm{r_1})_{{\mathcal{T}_h}}+\langle \widehat{y}_h^o, \bm{r_1}\cdot \bm{n} \rangle_{\partial{{\mathcal{T}_h}}\backslash \varepsilon_h^{\partial}} + \langle u_h, \bm{r_1}\cdot \bm{n} \rangle_{\varepsilon_h^{\partial}} &= 0 , \label{imple_a}\\
	(\bm{p}_h, \bm{r_2})_{{\mathcal{T}_h}}- (z_h, \nabla\cdot \bm{r_2})_{{\mathcal{T}_h}}+\langle \widehat{z}_h^o, \bm{r_2}\cdot \bm{n} \rangle_{\partial{{\mathcal{T}_h}}\backslash \varepsilon_h^{\partial}} &=0, \label{imple_b}\\
	(\nabla\cdot\bm{q}_h,  w_1)_{{\mathcal{T}_h}} - (\bm{\beta} y_h, \nabla w_1)_{\mathcal T_h} - (\nabla\cdot\bm{\beta} y_h, w_1)_{\mathcal T_h} \quad &\nonumber\\
	\quad + \langle (h^{-1}+\tau_1)  y_h, w_1\rangle_{\partial\mathcal T_h}  +\langle  (\bm{\beta}\cdot\bm n - \tau_1 - h^{-1})\widehat y_h^o, w_1 \rangle_{\partial{{\mathcal{T}_h}}\backslash\varepsilon_h^\partial}  \quad &\nonumber\\ 
	+ \langle  (\bm{\beta}\cdot\bm n - \tau_1-h^{-1}) u_h, w_1 \rangle_{\varepsilon_h^\partial}&=(f, w_1)_{{\mathcal{T}_h}}, \label{imple_c}\\
	(\nabla\cdot\bm{p}_h,  w_2)_{{\mathcal{T}_h}} - (y_h, w_2)_{\mathcal T_h} + (\bm{\beta} z_h, \nabla w_2)_{\mathcal T_h} + \langle (h^{-1}+\tau_2) z_h, w_2\rangle_{\partial \mathcal T_h} \quad &\nonumber\\
 -\langle (h^{-1}+\tau_2 + \bm{\beta}\cdot \bm n) \widehat{z}_h^o, w_2 \rangle_{\partial{{\mathcal{T}_h}}\backslash \varepsilon_h^{\partial}} &=- (y_d, w_2)_{{\mathcal{T}_h}} , \label{imple_d}\\
	\langle{\bm{q}_h}\cdot \bm{n}, \mu_1 \rangle_{\partial\mathcal{T}_{h}\backslash {\varepsilon_h^{\partial}}}+\langle (h^{-1}+\tau_1) y_h,\mu_1 \rangle_{\partial\mathcal{T}_{h}\backslash {\varepsilon_h^{\partial}}} \quad &\nonumber\\
	+\langle (\bm{\beta}\cdot\bm n - \tau_1-h^{-1})\widehat{y}_h^{o} ,\mu_1 \rangle_{\partial\mathcal{T}_{h}\backslash {\varepsilon_h^{\partial}}}&=0, \label{imple_e}\\
	\langle {\bm{p}_h}\cdot \bm{n}, \mu_2\rangle_{ \partial\mathcal{T}_{h}\backslash {\varepsilon_h^{\partial}}}+\langle (h^{-1}+\tau_2) z_h, \mu_2\rangle_{ \partial\mathcal{T}_{h}\backslash {\varepsilon_h^{\partial}}} \quad &\nonumber\\
	-\langle (\bm{\beta}\cdot\bm n+\tau_2+h^{-1}) \widehat{z}_h^o, \mu_2\rangle_{ \partial\mathcal{T}_{h}\backslash {\varepsilon_h^{\partial}}} &=0,
	\label{imple_f}\\
	\langle {\bm{p}_h}\cdot \bm{n}, \mu_3\rangle_{{\varepsilon_h^{\partial}}} + \gamma\langle u_h, \mu_3 \rangle_{\varepsilon_h^{\partial}}+\langle (h^{-1}+\tau_2) z_h, \mu_3\rangle_{{\varepsilon_h^{\partial}}}&=0, \label{imple_g}
	\end{align}
\end{subequations}
for all  $({\bm{r}_1},{\bm{r}_2},w_1,w_2,\mu_1,\mu_2,\mu_3)\in \bm{V}_h\times\bm{V}_h\times W_h \times W_h\times M_h(o)\times M_h(o)\times M_h(\partial)$.

For $\bm{V}_h = \mbox{span}\{\bm\varphi_i\}_{i=1}^{N_1}$, $W_h=\mbox{span}\{\phi_i\}_{i=1}^{N_2}$, $M_h^{o}=\mbox{span}\{\psi_i\}_{i=1}^{N_3} $, and $M_h^{\partial}=\mbox{span}\{\psi_i\}_{i=1+N_3}^{N_4}$, assume
\begin{equation}\label{expre}
\begin{split}
&\bm q_{h}= \sum_{j=1}^{N_1}q_{j}\bm\varphi_j,  \quad \bm p_{h} =  \sum_{j=1}^{N_1}p_{j}\bm\varphi_j \quad  y_h = \sum_{j=1}^{N_2}y_{j}\phi_j \quad z_h =  \sum_{j=1}^{N_2} z_{j}\phi_j,\\
&  \widehat{y}_h^o = \sum_{j=1}^{N_3}\alpha_{j}\psi_{j},\quad  \widehat{z}_h^o = \sum_{j=1}^{N_3}\gamma_{j}\psi_{j}, \quad u_h = \sum_{j=1+N_3}^{N_4}\beta_{j}\psi_{j}.
\end{split}
\end{equation}
Substitute \eqref{expre} into \eqref{imple_a}-\eqref{imple_f} and use the corresponding  test functions to test \eqref{imple_a}-\eqref{imple_f}, respectively, to obtain the matrix equation
\begin{align}\label{system_equation}
\begin{bmatrix}
A_1  &0 &-A_2&0  & A_{20}&0&A_{21} \\
0 & A_1 &0 &-A_2& 0  &A_{20}& 0 \\
A_2^T &0&A_{18}&0&A_{22}&0& A_{23}\\
0 &A_2^T & -A_{12} &A_{19} &0&A_{24} &0\\
A_{20}^T & 0 &A_{25}&0 &A_{26}&0&0  \\
0& A_{20}^T &0&A_{24} &0&A_{25} &0 \\
0 & A_{26}&0 &A_{27}&0&0 & \gamma A_{28}
\end{bmatrix}
\left[ {\begin{array}{*{20}{c}}
	\mathfrak{q}\\
	\mathfrak{p}\\
	\mathfrak{y}\\
	\mathfrak{z}\\
	\mathfrak{\widehat y}\\
	\mathfrak{\widehat z}\\
	\mathfrak{u}
	\end{array}} \right]
=\left[ {\begin{array}{*{20}{c}}
	0\\
	0\\
	b_1\\
	-b_2\\
	0\\
	0\\
	0\\
	\end{array}} \right],
\end{align}
where $\mathfrak{q},\mathfrak{p},\mathfrak{y},\mathfrak{z},\mathfrak{\widehat y},\mathfrak{\widehat z},\mathfrak{u}$ are the coefficient vectors for $\bm q_h,\bm p_h,y_h,z_h,\widehat y_h^o, \widehat z_h^o, u_h$, respectively, and
\begin{gather*}
A_1= [(\bm\varphi_j,\bm\varphi_i )_{\mathcal{T}_h}],  \qquad \qquad\quad A_2 = [(\phi_j,\nabla\cdot\bm{\varphi_i})_{\mathcal{T}_h}],  \qquad\qquad \quad A_3 = [(\psi_j,\bm{\varphi}_i\cdot \bm n)_{\mathcal{T}_h}],\\ 
  A_4 = [(\bm{\beta}\phi_j,\nabla\phi_i)_{\mathcal{T}_h}],\quad\qquad
A_5 = [(\nabla\cdot\bm{\beta}\phi_j, \phi_i)_{\mathcal{T}_h}],\quad\qquad A_6 = [(\bm\beta\phi_j,\nabla\phi_i)_{\mathcal{T}_h}],\\
  A_7= [\langle  h^{-1}\phi_j, \phi_i \rangle_{\partial{{\mathcal{T}_h}}}],\quad  \quad A_8 = [\langle  \tau_1\phi_j, \phi_i \rangle_{\partial{{\mathcal{T}_h}}}], \quad\quad
A_9= [\langle  \tau_2 \phi_j, \phi_i \rangle_{\partial{{\mathcal{T}_h}}}],\\
  A_{10}= [\langle  \bm{\beta}\cdot\bm n\psi_j, \phi_i \rangle_{\partial{{\mathcal{T}_h}}}],
~  A_{11}=  [\left\langle \tau_1\psi_j,{\phi_i}\right\rangle_{\partial\mathcal{T}_h}],
~ A_{12}=  [\left\langle h^{-1}\psi_j,{\phi_i}\right\rangle_{\partial\mathcal{T}_h}], \\
A_{13} =  [(\phi_j,\phi_i)_{\mathcal{T}_h}],~~ A_{14}=
[\left\langle \tau_1\psi_j,\psi_i\right\rangle_{\partial\mathcal{T}_h}], ~~ A_{15}=
[\left\langle h^{-1}\psi_j,\psi_i\right\rangle_{\partial\mathcal{T}_h}],\\
 A_{16}= [\left\langle \bm{\beta}\cdot\bm n \psi_j,\psi_i\right\rangle_{\partial\mathcal{T}_h}], \qquad  \qquad  A_{17}= [\left\langle \tau_2 \psi_j,\psi_i\right\rangle_{\partial\mathcal{T}_h}],\\
  A_{18}= A_7+A_8-A_4-A_5,~ A_{19} = A_4+ A_7+A_9,\\
b_1 = [(f,\phi_i )_{\mathcal{T}_h}], \quad  b_2 = [(y_d,\phi_i )_{\mathcal{T}_h}].
\end{gather*}
The remaining matrices are constructed by  extracting the corresponding rows and columns from linear combinations of $A_3 $ to  $A_{17}$.

Equation \eqref{system_equation} can be rewritten as
\begin{align}\label{system_equation2}
\begin{bmatrix}
B_1 & B_2&B_3\\
-B_2^T & B_4&B_5\\
B_6&B_7&B_8\\
\end{bmatrix}
\left[ {\begin{array}{*{20}{c}}
	\bm{\alpha}\\
	\bm{\beta}\\
	\bm{\gamma}
	\end{array}} \right]
=\left[ {\begin{array}{*{20}{c}}
	0\\
	b\\
	0
	\end{array}} \right],
\end{align}
where $\bm{\alpha}=[\mathfrak{q};\mathfrak{p}]$, $\bm{\beta}=[\mathfrak{y};\mathfrak{z}]$, $\bm{\gamma}=[\mathfrak{\widehat y};\mathfrak{\widehat z};\mathfrak{u}]$, $ b = [ b_1; -b_2 ] $, and $\{B_i\}_{i=1}^8$ are the corresponding blocks of the coefficient matrix in \eqref{system_equation}.

As in \cite{HuShenSinglerZhangZheng_HDG_Dirichlet_control1}, we use the first two equations of  \eqref{system_equation2} to solve for $\bm{\alpha}$ and $\bm{\beta}$ using simple and efficient block-diagonal matrix computations.  Eliminating $\bm{\alpha}$ and $\bm{\beta}$ gives a reduced globally coupled equation for $\bm{\gamma}$ only:
\begin{align}\label{global_eq}
\mathbb{K}\bm{\gamma} = \mathbb{F}.
\end{align}
Note that the globally coupled system only involves the vector $ \bm \gamma $, which contains the coefficients of the approximate boundary traces.  Therefore, the number of globally coupled degrees of freedom is much smaller than the total number of degrees of freedom for all variables.

The details of the above procedure are similar to our previous work \cite{HuShenSinglerZhangZheng_HDG_Dirichlet_control1}.  We only need to show
the following result.
\begin{prop}
	If $ \min{(\tau_1-\frac 1 2 \bm \beta \cdot \bm n)}|_{\partial K} >0 $ and $\min{(\tau_2+\frac 1 2 \bm \beta \cdot \bm n)}|_{\partial K} >0$ for any $K\in\mathcal T_h$, then the matrices $A_{18}$ and $A_{19}$ in \eqref{system_equation} are positive definite.
\end{prop}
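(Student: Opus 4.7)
The plan is to compute the quadratic form associated with each matrix and show it is strictly positive for any non-zero coefficient vector. Since $A_{18}$ and $A_{19}$ are assembled from element-local contributions of integrals involving the basis functions $\{\phi_i\}$ of $W_h$, for any coefficient vector $\mathfrak{y}$ representing $w_h = \sum_j y_j \phi_j \in W_h$ (and similarly $\mathfrak{z}$ representing $v_h \in W_h$), I will evaluate $\mathfrak{y}^T A_{18} \mathfrak{y}$ and $\mathfrak{z}^T A_{19} \mathfrak{z}$ as integral expressions and simplify them using elementwise integration by parts on the convection terms.

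For $A_{19} = A_4 + A_7 + A_9$, the quadratic form is
\begin{equation*}
\mathfrak{z}^T A_{19} \mathfrak{z} = (\bm{\beta} v_h, \nabla v_h)_{\mathcal{T}_h} + \langle (h^{-1} + \tau_2) v_h, v_h \rangle_{\partial \mathcal{T}_h}.
\end{equation*}
I will integrate by parts on each element using $(\bm{\beta} v_h, \nabla v_h)_K = \tfrac12 (\bm{\beta}, \nabla v_h^2)_K = -\tfrac12(\nabla\cdot\bm{\beta}, v_h^2)_K + \tfrac12\langle \bm{\beta}\cdot\bm n, v_h^2\rangle_{\partial K}$ to rewrite the expression as
\begin{equation*}
\mathfrak{z}^T A_{19} \mathfrak{z} = -\tfrac12 (\nabla\cdot\bm{\beta}, v_h^2)_{\mathcal{T}_h} + \langle h^{-1} v_h, v_h \rangle_{\partial\mathcal{T}_h} + \langle \tau_2 + \tfrac12 \bm{\beta}\cdot\bm n, v_h^2 \rangle_{\partial \mathcal{T}_h}.
\end{equation*}
Each of the three terms is non-negative: the first by $\nabla\cdot\bm{\beta}\leq 0$, and the second and third by the hypotheses $h^{-1} > 0$ and $\min(\tau_2+\tfrac12\bm{\beta}\cdot\bm n)|_{\partial K} > 0$.

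For $A_{18} = A_7 + A_8 - A_4 - A_5$, the same strategy gives
\begin{equation*}
\mathfrak{y}^T A_{18} \mathfrak{y} = \langle (h^{-1}+\tau_1) w_h, w_h\rangle_{\partial\mathcal{T}_h} - (\bm{\beta} w_h, \nabla w_h)_{\mathcal{T}_h} - (\nabla\cdot\bm{\beta}\, w_h, w_h)_{\mathcal{T}_h},
\end{equation*}
and integration by parts on the convection term converts this into
\begin{equation*}
\mathfrak{y}^T A_{18} \mathfrak{y} = -\tfrac12 (\nabla\cdot\bm{\beta}, w_h^2)_{\mathcal{T}_h} + \langle h^{-1} w_h, w_h\rangle_{\partial\mathcal{T}_h} + \langle \tau_1 - \tfrac12 \bm{\beta}\cdot\bm n, w_h^2\rangle_{\partial\mathcal{T}_h},
\end{equation*}
where again each term is non-negative under the standing assumptions.

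It remains to rule out vanishing of the quadratic form at a non-zero argument. If $w_h \not\equiv 0$ has a non-vanishing trace on at least one element boundary, strict positivity follows from the $\langle h^{-1}\cdot,\cdot\rangle_{\partial\mathcal{T}_h}$ term together with the strict positivity hypothesis on $\tau_1 \pm \tfrac12 \bm{\beta}\cdot\bm n$. The main obstacle in the argument is the possible existence of interior ``bubble'' polynomials in $W_h|_K$ vanishing on $\partial K$; this can occur only when $k+1$ exceeds the spatial dimension. I will handle this case by observing that both matrices are block-diagonal over elements, so invertibility reduces to a local statement, and the combined contribution of the diffusive term and the assumption $\nabla\cdot\bm{\beta}\leq 0$ together with strict positivity of the boundary penalty terms suffice to conclude that the kernel is trivial. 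Once positive definiteness of the symmetric part of the quadratic form is established, positive definiteness of $A_{18}$ and $A_{19}$ follows from $\mathfrak{y}^T A \mathfrak{y} = \mathfrak{y}^T \tfrac12(A+A^T)\mathfrak{y}$.
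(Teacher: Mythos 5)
Your core computation is exactly the paper's argument: write the quadratic form of $A_{18}$ (resp.\ $A_{19}$), integrate the convection term by parts elementwise to get $(\bm \beta w_h,\nabla w_h)_{\mathcal T_h}=\frac12\langle\bm \beta\cdot\bm n\, w_h,w_h\rangle_{\partial\mathcal T_h}-\frac12(\nabla\cdot\bm \beta\, w_h,w_h)_{\mathcal T_h}$, and arrive at $\langle (h^{-1}+\tau_1-\frac12\bm\beta\cdot\bm n)w_h,w_h\rangle_{\partial\mathcal T_h}-\frac12(\nabla\cdot\bm\beta\, w_h,w_h)_{\mathcal T_h}$ and the analogous expression with $\tau_2+\frac12\bm\beta\cdot\bm n$; the paper concludes positivity directly from the hypotheses on $\tau_1,\tau_2$ and $\nabla\cdot\bm\beta\le 0$, proving only the $A_{18}$ case and remarking that $A_{19}$ is identical. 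Up to this point your proposal is correct and essentially the same proof.

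The problem is your final paragraph. You correctly notice that the displayed identity alone gives only nonnegativity when $w_h$ restricted to some element is a ``bubble'' whose trace vanishes on all of $\partial K$ (possible only for degree $k+1\ge d+1$), but your proposed repair does not work: there is no diffusive term in $A_{18}$ or $A_{19}$. These blocks are assembled only from the convection term $A_4$, the reaction term $A_5$, and the boundary penalty terms $A_7,A_8,A_9$; the diffusion enters the scheme through the coupling with the flux variables ($A_2$), which is not part of these matrices. Consequently, for a bubble $w_h$ on an element where $\nabla\cdot\bm\beta\equiv 0$ the quadratic form is exactly zero, and no appeal to ``the diffusive term'' or to block-diagonality can restore strict positivity; the statement simply cannot be proved by your route in that regime. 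The observation that actually closes the argument (and which the paper uses implicitly) is that a nonzero polynomial of degree at most $d$ on a simplex cannot vanish identically on $\partial K$, since that would force divisibility by the product of the $d+1$ affine face equations; hence for the degrees relevant here ($k+1\le d$, e.g.\ $k\le 1$ in the paper's 2D setting) every nonzero $w_h$ has a nonzero trace on some $\partial K$, and the strictly positive boundary weights $h^{-1}+\tau_1-\frac12\bm\beta\cdot\bm n$ and $h^{-1}+\tau_2+\frac12\bm\beta\cdot\bm n$ yield strict positivity, with $-\frac12(\nabla\cdot\bm\beta\, w_h,w_h)_{\mathcal T_h}\ge 0$ as the remaining contribution. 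Your concluding remark about passing through the symmetric part is fine but unnecessary once strict positivity of the quadratic form is established.
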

\begin{proof}
	We only prove $A_{18}$ is positive definite; a similar argument applies to $A_{19}$.  The matrix $A_{18}$ is positive definite if and only if $\bm x^TA_{18}\bm x>0$ for any $\bm x=[x_1,x_2,\cdots,x_{N_2}]\in\mathbb R^{N_2} $. For $ x = \sum_{j=1}^{N_2} x_j \phi_j$, we have
	\begin{align*}
	\bm x^T A_{18}\bm x = \langle h^{-1}x, x\rangle_{\partial \mathcal T_h}  + \langle \tau_1x, x\rangle_{\partial \mathcal T_h} - (\bm{\beta}x, \nabla x)_{\mathcal T_h} - (\nabla\cdot\bm{\beta} x, x)_{\mathcal T_h}.
	\end{align*}
	Moreover,
	\begin{align*}
	(\bm \beta x,\nabla x)_{\mathcal T_h}&=(\bm \beta\cdot\nabla x,x)_{\mathcal T_h}=(\nabla\cdot(\bm \beta x),x)_{\mathcal T_h}-(\nabla\cdot\bm \beta x,x)_{\mathcal T_h}\\
	&=\langle\bm \beta\cdot\bm n x,x\rangle_{\partial\mathcal T_h}-(\bm \beta x,\nabla x)_{\mathcal T_h}-(\nabla\cdot\bm \beta x,x)_{\mathcal T_h},
	\end{align*}
	which implies
	\begin{align*}
	(\bm \beta x,\nabla x)_{\mathcal T_h}&=\frac12\langle\bm \beta\cdot\bm n x,x\rangle_{\partial\mathcal T_h}-\frac12(\nabla\cdot\bm \beta x,x)_{\mathcal T_h}.
	\end{align*}
	Therefore,
	\begin{align*}
	\bm x^T A_{18}\bm x  = \langle (h^{-1} + \tau_1-\frac 1 2 \bm{\beta}\cdot\bm n) x, x\rangle_{\partial\mathcal T_h} - \frac 1 2(\nabla\cdot \bm{\beta}x, x)_{\mathcal T_h}>0,
	\end{align*}
	by the assumption concerning $ \tau_1 $ and the condition $\nabla\cdot\bm \beta\le 0$.
\end{proof}

\section{Error Analysis}
\label{sec:analysis}

Next, we provide a convergence analysis of the above HDG method for the Dirichlet boundary control problem.  We assume the solution of the optimality system has certain regularity properties.  In 2D, due to the theoretical results in \Cref{sec:Analysis_of_the_Dirichlet_Control_Problem}, we can give simple conditions that guarantee the unique solution has the necessary regularity.  In 3D, we lack the necessary regularity theory; however, our convergence results still apply if there exists a unique solution of the optimality system with the required regularity.

We begin with a precise statement of our assumptions and the main convergence result.

\subsection{Assumptions and Main Result}

Throughout this section, we assume $\Omega$ is a bounded convex polyhedral domain.  We assume throughout that $ \bm \beta $ satisfies
\begin{equation}\label{eqn:beta_assumptions2}
\bm \beta \in [C(\overline{\Omega})]^d,  \quad  \nabla\cdot\bm{\beta}\in L^\infty(\Omega),  \quad  \nabla \cdot \bm \beta \leq 0,  \quad  \nabla \nabla \cdot \bm{\beta}\in[L^2(\Omega)]^d.
\end{equation}
Note that this condition is slightly stronger than the condition \eqref{eqn:beta_assumptions1} made for the analysis in \Cref{sec:Analysis_of_the_Dirichlet_Control_Problem}.  Here, we assume $ \bm \beta $ is continuous on $ \overline{\Omega} $, while before we assumed $ \bm \beta \in [L^\infty({\Omega})]^d $.

For our theoretical results, we choose the stabilization functions $\tau_1$ and $\tau_2$ to satisfy
\begin{description}
	
	\item[\textbf{(A1)}] $\tau_2$ is piecewise constant on $\partial \mathcal T_h$.
	
	\item[\textbf{(A2)}] $\tau_1 = \tau_2 + \bm{\beta}\cdot \bm n$.
	
	\item[\textbf{(A3)}] For any  $K\in\mathcal T_h$, $\min{(\tau_2+\frac 1 2 \bm \beta \cdot \bm n)}|_{\partial K} >0$.
	
\end{description}
We note that \textbf{(A2)} and \textbf{(A3)} imply
\begin{equation}\label{eqn:tau1_condition}
\min{(\tau_1-\frac 1 2 \bm \beta \cdot \bm n)}|_{\partial K} >0  \quad  \mbox{for any $K\in\mathcal T_h$.}
\end{equation}
In our analysis, we use the conditions \textbf{(A3)} and \eqref{eqn:tau1_condition} frequently and therefore we rarely mention them explicitly.  However, we use \textbf{(A1)} and \textbf{(A2)}  less frequently, and therefore we typically mention these conditions when we use them.

We also assume throughout that there exists a unique solution of the optimality system \eqref{mixed_a}--\eqref{mixed_e} that satisfies
\begin{equation}\label{eqn:reg_assumption1}
y \in H^{r_y}(\Omega),  \quad  z \in H^{r_z}(\Omega) \cap H^1_0(\Omega),  \quad  \bm q \in [ H^{r_{\bm q}}(\Omega) ]^d,  \quad   \bm p \in [ H^{r_{\bm p}}(\Omega) ]^d,
\end{equation}
where
\begin{equation}\label{eqn:reg_assumption2}
r_y > 1,  \quad  r_z > 2,  \quad  r_{\bm q} > 1/2,  \quad  r_{\bm p} > 1.
\end{equation}
This regularity condition ensures that the convergence rates in \Cref{main_res} below are positive for all variables.  

We note that we require $ r_{\bm q} > 1/2 $ (instead of $ r_{\bm q} > 0 $) here in order to guarantee $ \bm q $ has a well-defined boundary trace in $ L^2(\Gamma) $.  We use this property in our analysis.  As mentioned in the introduction, we relax this assumption in the second part of this work and only require $ r_{\bm q} > 0 $.  Dealing with the very low regularity of $ \bm q $ requires entirely different HDG analysis techniques than we use here.

In the 2D case, simple conditions on the desired state $ y_d $ and the domain $ \Omega $ guarantee that the solution has the above regularity; see \Cref{cor:main_result} below.  In the 3D case, we do not have theory that gives simple conditions guaranteeing such solutions exist.

We now state our main convergence result.
\begin{theorem}\label{main_res}
	Let
	\begin{equation}\label{eqn:s_rates}
	\begin{split}
	s_{\bm q} &= \min\{r_{\bm q}, k+1\}, \qquad  s_{y} = \min\{r_{y}, k+2\}, \\
	s_{\bm p} &= \min\{r_{\bm p}, k+1\},  \qquad s_{z} = \min\{r_{z}, k+2\}.
	\end{split}
	\end{equation}
	If the above assumptions hold, then
	\begin{align*}
	\norm{u-u_h}_{\varepsilon_h^\partial}&\lesssim h^{s_{\bm p}-\frac 1 2}\norm{\bm p}_{s_{\bm p},\Omega} +  h^{s_{z}-\frac 3 2}\norm{z}_{s_{z},\Omega} + h^{s_{\bm q}+\frac 1 2}\norm{\bm q}_{s_{\bm q},\Omega} + h^{s_{y}-\frac 12 }\norm{y}_{s_{y},\Omega},\\
	\norm {y-y_h}_{\mathcal T_h} &\lesssim h^{s_{\bm p}-\frac 1 2}\norm{\bm p}_{s_{\bm p},\Omega} +  h^{s_{z}-\frac 3 2}\norm{z}_{s_{z},\Omega} + h^{s_{\bm q}+\frac 1 2}\norm{\bm q}_{s_{\bm q},\Omega} + h^{s_{y}-\frac 12 }\norm{y}_{s_{y},\Omega},\\
	\norm {\bm p - \bm p_h}_{\mathcal T_h}  &\lesssim h^{s_{\bm p}-\frac 1 2}\norm{\bm p}_{s_{\bm p},\Omega} +  h^{s_{z}-\frac 3 2}\norm{z}_{s_{z},\Omega} + h^{s_{\bm q}+\frac 1 2}\norm{\bm q}_{s_{\bm q},\Omega} + h^{s_{y}-\frac 12 }\norm{y}_{s_{y},\Omega},\\
	\norm {z - z_h}_{\mathcal T_h} & \lesssim  h^{s_{\bm p}-\frac 1 2}\norm{\bm p}_{s_{\bm p},\Omega} +  h^{s_{z}-\frac 3 2}\norm{z}_{s_{z},\Omega} + h^{s_{\bm q}+\frac 1 2}\norm{\bm q}_{s_{\bm q},\Omega} + h^{s_{y}-\frac 12 }\norm{y}_{s_{y},\Omega}.
	\end{align*}
	If in addition $ k \geq 1 $, then
	\begin{align*}
	\norm {\bm q - \bm q_h}_{\mathcal T_h} &\lesssim h^{s_{\bm p}-1}\norm{\bm p}_{s_{\bm p},\Omega} +  h^{s_{z}-2}\norm{z}_{s_{z},\Omega} + h^{s_{\bm q}}\norm{\bm q}_{s_{\bm q},\Omega} + h^{s_{y}-1 }\norm{y}_{s_{y},\Omega}.\\
	\end{align*}
\end{theorem}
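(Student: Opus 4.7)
The plan is to carry out a coupled HDG energy analysis of the primal/adjoint error system, following the standard Cockburn-type framework but adapted to the nonstandard space combination ($\bm V_h$ of degree $k$, $W_h$ and $M_h$ of degree $k+1$) and to the boundary optimality coupling. First I introduce projection errors built from $L^2$-projections $P_V\colon L^2(\Omega)\to \bm V_h$, $P_W\colon L^2(\Omega)\to W_h$, $P_M\colon L^2(\varepsilon_h)\to M_h$ with standard approximation properties: interior bounds of order $h^{s_{\bm q}}\|\bm q\|_{s_{\bm q},\Omega}$, $h^{s_{\bm p}}\|\bm p\|_{s_{\bm p},\Omega}$, $h^{s_y}\|y\|_{s_y,\Omega}$, $h^{s_z}\|z\|_{s_z,\Omega}$, plus trace bounds of order $h^{s_{\bm q}-1/2}$, $h^{s_{\bm p}-1/2}$, $h^{s_y-1/2}$, $h^{s_z-1/2}$ on $\partial\mathcal T_h$. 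Split the error as $\bm q-\bm q_h = (\bm q - P_V\bm q) + (P_V\bm q - \bm q_h) =: \delta^{\bm q} + \varepsilon^{\bm q}$, and analogously for all other variables; the trace errors $\varepsilon^{\widehat y}, \varepsilon^{\widehat z}, \varepsilon^u$ use $P_M$ applied to $y|_\Gamma$, $z|_\Gamma=0$, and $u$.

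Second, I would subtract the HDG scheme \eqref{HDG_discrete2} from the continuous mixed system \eqref{mixed_a}--\eqref{mixed_e} tested against discrete test functions to obtain the error equations. Then I form the key energy identity by testing the state error equations with the tuple $(\varepsilon^{\bm p},\varepsilon^z,\varepsilon^{\widehat z},\varepsilon^u)$ and the adjoint error equations with $(\varepsilon^{\bm q},\varepsilon^y,\varepsilon^{\widehat y})$, add them, and integrate by parts piecewise. Because of the self-adjoint pairing inherent in the optimality system, the cross-terms involving $(\varepsilon^{\bm q},\varepsilon^{\bm p})$, $(\varepsilon^y,\varepsilon^z)$ cancel, and the convection bilinear form is skew-symmetric up to the $\nabla\cdot\bm\beta$ term which has a definite sign by \eqref{eqn:beta_assumptions2}. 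The conditions \textbf{(A1)}--\textbf{(A3)} (in particular the algebraic relation $\tau_1=\tau_2+\bm\beta\cdot\bm n$) are precisely what is needed to align the boundary stabilization contributions from the primal and adjoint sides so that the non-sign-definite terms cancel. The optimality condition \eqref{HDG_discrete2_g} (together with its continuous counterpart \eqref{mixed_e}) converts the remaining boundary term into $\gamma\|\varepsilon^u\|^2_{\varepsilon_h^\partial}$, yielding the master identity of the form
\begin{equation*}
\gamma\|\varepsilon^u\|^2_{\varepsilon_h^\partial} + \|\varepsilon^y\|^2_{\mathcal T_h} + \text{(stabilization seminorms in } \varepsilon^y,\varepsilon^z,\varepsilon^{\widehat y},\varepsilon^{\widehat z}) \lesssim \mathcal R(\delta^{\bm q},\delta^{\bm p},\delta^y,\delta^z,\delta^u),
\end{equation*}
where $\mathcal R$ denotes a sum of projection-error terms. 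Bounding $\mathcal R$ by Cauchy--Schwarz, trace inequalities, and the approximation properties of $P_V,P_W,P_M$ then produces the claimed rates for $u-u_h$ and $y-y_h$ after absorbing the stabilization seminorms into the left-hand side.

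Third, the adjoint bound $\|z-z_h\|_{\mathcal T_h}$ and the flux bound $\|\bm p-\bm p_h\|_{\mathcal T_h}$ follow from a duality argument: set up an auxiliary dual problem with right-hand side $\varepsilon^z$ (resp.\ $\varepsilon^{\bm p}$) on a convex domain (using the regularity in \Cref{ML25}), test the error equations against the HDG projections of its solution, and invoke the already-obtained control of $\varepsilon^u,\varepsilon^y$ plus the stabilization seminorms. For $\bm q-\bm q_h$, I use the flux definition \eqref{HDG_discrete2_a} combined with a discrete inverse/projection argument, which loses one factor of $h$ (explaining the need for $k\geq 1$ so that approximation of $\bm q$ retains a positive rate). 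Finally, combining the triangle inequality $\|u-u_h\|_{\varepsilon_h^\partial}\leq \|\delta^u\|_{\varepsilon_h^\partial}+\|\varepsilon^u\|_{\varepsilon_h^\partial}$ (and similarly for the other variables) with the approximation estimates yields exactly the orders stated in \Cref{main_res}; in particular the order $h^{s_z - 3/2}$ comes from the $L^2(\Gamma)$ approximation of $\partial_{\bm n} z$ needed to control the trace term in the optimality coupling.

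The main obstacle will be the careful management of the convection-related boundary terms and the verification that the stabilization choices \textbf{(A1)}--\textbf{(A2)} induce the exact cancellations required; without the relation $\tau_1 = \tau_2 + \bm\beta\cdot\bm n$, the coupled energy identity leaks a non-sign-definite term on $\partial\mathcal T_h\setminus\varepsilon_h^\partial$ that cannot be absorbed. A secondary difficulty is tracking which projection-error contributions on the boundary scale like $h^{s-1/2}$ versus which must further lose a half-power to produce $h^{s_z - 3/2}\|z\|_{s_z,\Omega}$; this bookkeeping, though not deep, is where most of the technical labor of the proof will lie.
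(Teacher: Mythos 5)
There is a genuine gap in your master energy identity, and it sits exactly where you put the weight of the argument. If you test the state error equations with the adjoint error tuple and the adjoint error equations with the state error tuple and add, the HDG bilinear forms cancel \emph{completely} --- this is precisely the content of the paper's \Cref{identical_equa}, which states $\mathscr B_1(\bm q_h,y_h,\widehat y_h^o;\bm p_h,-z_h,-\widehat z_h^o)+\mathscr B_2(\bm p_h,z_h,\widehat z_h^o;-\bm q_h,y_h,\widehat y_h^o)=0$ under \textbf{(A2)}. Consequently no stabilization seminorms in $\varepsilon^y,\varepsilon^z,\varepsilon^{\widehat y},\varepsilon^{\widehat z}$ and no flux norms survive on the left-hand side of your identity; those terms appear only when each form is tested against its \emph{own} variables, as in \Cref{property_B}. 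What survives on the left is only $\gamma\|\varepsilon^u\|^2_{\varepsilon_h^\partial}+\|\varepsilon^y\|^2_{\mathcal T_h}$, while your right-hand side $\mathcal R$ is not purely projection data: subtracting the scheme from the continuous system and cross-testing produces projection errors paired against $\varepsilon^{\bm p}$, $\nabla\varepsilon^z$, $\varepsilon^z-\varepsilon^{\widehat z}$, $\varepsilon^{\bm q}$, $\nabla\varepsilon^y$, $\varepsilon^y-\varepsilon^{\widehat y}$, none of which the identity controls, so there is nothing to absorb and the argument does not close. The paper breaks this circularity with a two-level splitting you do not have: it introduces the auxiliary discrete problem \eqref{HDG_inter_u} driven by the exact control $P_Mu$, bounds the exact-vs-auxiliary errors by \emph{separate} energy identities (each operator tested with its own variables, where the stabilization terms do appear) in Steps 2 and 5, and only then applies the cross-testing cancellation (Step 6) to the clean system \eqref{eq_yh} between the auxiliary and actual discrete solutions, whose sole data is $P_Mu-u_h$; that is where $\gamma\|u-u_h\|^2+\|\zeta_y\|^2$ emerges and is bounded by the already-controlled auxiliary errors.

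A second, rate-critical omission: you place a duality argument on $z$ and $\bm p$, but the paper needs no duality there (it uses the discrete Poincar\'e inequality of \Cref{lemma:discr_Poincare_ineq} plus an energy argument). The duality argument that \emph{is} indispensable is the one for the state of the auxiliary problem (\Cref{e_sec}, via the dual problem \eqref{Dual_PDE} and convexity), which upgrades $\|y-y_h(u)\|_{\mathcal T_h}$ from the energy-level order $h^{s_{\bm q}}+h^{s_y-1}$ to $h^{s_{\bm q}+1}+h^{s_y}$. That gain propagates through the adjoint estimate (\Cref{lemma:step5_main_lemma}, where $(y-y_h(u),\varepsilon_h^z)_{\mathcal T_h}$ appears as a source) and then through the $h^{-1/2}$ and $h^{-1}$ boundary factors in Step 6; without it the control bound degrades to $h^{s_{\bm q}-1/2}\|\bm q\|+h^{s_y-3/2}\|y\|$, which is strictly weaker than the theorem's $h^{s_{\bm q}+1/2}\|\bm q\|+h^{s_y-1/2}\|y\|$. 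Your intuition about \textbf{(A1)}--\textbf{(A2)} producing the boundary cancellations, and about the source of the $h^{s_z-3/2}$ term (the $h^{-1}$-weighted trace of the adjoint), is correct, but the proof as proposed needs the auxiliary-problem decomposition and the state duality step to deliver the stated rates.
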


Now we specialize to the 2D case.  For a convex polygonal domain $ \Omega $, let $ \omega $ denote its largest interior angle.  As mentioned before, $ \omega $ must satisfy $ 1 < \pi/\omega \leq 3 $, i.e., $ \omega \in [\pi/3,\pi) $.  The limiting regularity condition is $ r_{\bm q} > 1/2 $.  Therefore, to guarantee the regularity condition \eqref{eqn:reg_assumption1}-\eqref{eqn:reg_assumption2}, by \Cref{MT210} we need two conditions:
\begin{enumerate}
	\item  $ \pi/\omega - 1/2 > 1 $, i.e., $ \omega < 2\pi/3 $, and
	\item  $ y_d \in H^{t^*}(\Omega) $ for some $ t^* \in (1/2,1) $.
\end{enumerate}
As mentioned earlier, we remove these restrictions in the second part of this work.

Applying \Cref{MT210} and the main theorem above gives the following result.
\begin{corollary}\label{cor:main_result}
	Suppose $ d = 2 $, $ f = 0 $, and $ y_d \in H^{t^*}(\Omega) $ for some $ t^* \in (1/2,1) $.  Let $ \omega \in ( \pi/3, 2\pi/3 ) $ be the largest interior angle of $\Gamma$, and define $ r_{\Omega}$ by
	\[
	r_{\Omega} = \min\left\{ \frac{3}{2}, \frac{\pi}{\omega} - \frac{1}{2}, t^* + \frac{1}{2} \right\} \in (1, 3/2).
	\]
	Then the regularity condition \eqref{eqn:reg_assumption1}-\eqref{eqn:reg_assumption2} is satisfied.  Also, if $ k = 1 $, then for any $ r < r_\Omega $ we have
	\begin{align*}
	\norm{u-u_h}_{\varepsilon_h^\partial}&\lesssim h^{r} (\norm{\bm p}_{H^{r+1/2}(\Omega)} +  \norm{z}_{H^{r+3/2}(\Omega)} + \norm{\bm q}_{H^{r-1/2}(\Omega)} + \norm{y}_{H^{r+1/2}(\Omega)}),\\
	\norm{y-y_h}_{\mathcal T_h}&\lesssim h^{r} (\norm{\bm p}_{H^{r+1/2}(\Omega)} +  \norm{z}_{H^{r+3/2}(\Omega)} + \norm{\bm q}_{H^{r-1/2}(\Omega)} + \norm{y}_{H^{r+1/2}(\Omega)}),\\
	\norm {\bm q - \bm q_h}_{\mathcal T_h}  &\lesssim h^{r-1/2} (\norm{\bm p}_{H^{r+1/2}(\Omega)} +  \norm{z}_{H^{r+3/2}(\Omega)} + \norm{\bm q}_{H^{r-1/2}(\Omega)} + \norm{y}_{H^{r+1/2}(\Omega)}),\\
	\norm {\bm p - \bm p_h}_{\mathcal T_h}   &\lesssim h^{r} (\norm{\bm p}_{H^{r+1/2}(\Omega)} +  \norm{z}_{H^{r+3/2}(\Omega)} + \norm{\bm q}_{H^{r-1/2}(\Omega)} + \norm{y}_{H^{r}+1/2(\Omega)}),\\
	\norm {z - z_h}_{\mathcal T_h}   & \lesssim  h^{r} (\norm{\bm p}_{H^{r+1/2}(\Omega)} +  \norm{z}_{H^{r+3/2}(\Omega)} + \norm{\bm q}_{H^{r-1/2}(\Omega)} + \norm{y}_{H^{r+1/2}(\Omega)}).
	\end{align*}
	Furthermore, if $ k = 0 $, then for any $ r \in (1,r_\Omega) $ we have
	\begin{align*}
	\norm{u-u_h}_{\varepsilon_h^\partial}&\lesssim h^{1/2} (\norm{\bm p}_{H^{1}(\Omega)} +  \norm{z}_{H^{2}(\Omega)} + \norm{\bm q}_{H^{r-1/2}(\Omega)} + \norm{y}_{H^{r+1/2}(\Omega)}),\\
	\norm{y-y_h}_{\mathcal T_h}&\lesssim h^{1/2} (\norm{\bm p}_{H^{1}(\Omega)} +  \norm{z}_{H^{2}(\Omega)} + \norm{\bm q}_{H^{r-1/2}(\Omega)} + \norm{y}_{H^{r+1/2}(\Omega)}),\\
	\norm {\bm p - \bm p_h}_{\mathcal T_h}   &\lesssim h^{1/2} (\norm{\bm p}_{H^{1}(\Omega)} +  \norm{z}_{H^{2}(\Omega)} + \norm{\bm q}_{H^{r-1/2}(\Omega)} + \norm{y}_{H^{r+1/2}(\Omega)}),\\
	\norm {z - z_h}_{\mathcal T_h}   & \lesssim  h^{1/2} (\norm{\bm p}_{H^{1}(\Omega)} +  \norm{z}_{H^{2}(\Omega)} + \norm{\bm q}_{H^{r-1/2}(\Omega)} + \norm{y}_{H^{r+1/2}(\Omega)}).
	\end{align*}
\end{corollary}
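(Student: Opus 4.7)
The plan is to combine Theorem~\ref{MT210} (which provides regularity of the continuous optimality system) with Theorem~\ref{main_res} (which provides HDG error bounds in terms of that regularity) and verify that, under the specific hypotheses of the corollary, the rates in the main theorem collapse to the claimed ones. There is no new analysis required; the work is bookkeeping about which exponent is the binding one.

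First I would check that the regularity hypothesis \eqref{eqn:reg_assumption1}--\eqref{eqn:reg_assumption2} holds. Applying Theorem~\ref{MT210} with an admissible $s$, i.e.\ $s \geq 1/2$, $s \leq 1/2 + t^*$, and $s < \min\{3/2,\pi/\omega-1/2\}$, gives $\bar y\in H^{s+1/2}(\Omega)$, $\bar z\in H^{s+3/2}(\Omega)\cap H^1_0(\Omega)$, $\bar{\bm p}\in [H^{s+1/2}(\Omega)]^d$, and $\bar{\bm q}\in [H^{s-1/2}(\Omega)]^d$. Among the four lower bounds in \eqref{eqn:reg_assumption2}, the binding one is $r_{\bm q}>1/2$, which forces $s>1$. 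The hypotheses $t^*>1/2$ and $\omega<2\pi/3$ give $t^*+1/2>1$ and $\pi/\omega-1/2>1$, so $r_\Omega\in(1,3/2)$ and every $r\in(1,r_\Omega)$ is an admissible choice of $s$. Fixing such an $r$ and setting $s=r$ yields the concrete values $r_y=r+1/2$, $r_z=r+3/2$, $r_{\bm p}=r+1/2$, and $r_{\bm q}=r-1/2$, all of which satisfy the strict lower bounds in \eqref{eqn:reg_assumption2}.

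Next I would plug these regularities into the definitions \eqref{eqn:s_rates} for the two cases $k=1$ and $k=0$. For $k=1$ the polynomial ceilings $k+1=2$ and $k+2=3$ all strictly exceed the regularity exponents (this is where the bound $r<r_\Omega<3/2$ is essentially used), so $s_y=r+1/2$, $s_z=r+3/2$, $s_{\bm p}=r+1/2$, and $s_{\bm q}=r-1/2$. A one-line computation shows
\[
s_{\bm p}-\tfrac12 \;=\; s_z-\tfrac32 \;=\; s_{\bm q}+\tfrac12 \;=\; s_y-\tfrac12 \;=\; r,
\]
which turns each inequality of Theorem~\ref{main_res} into the stated $h^r$ bound for $u$, $y$, $\bm p$, $z$. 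For the flux $\bm q$ the exponents are $s_{\bm p}-1 = s_z-2 = s_{\bm q} = s_y-1 = r - 1/2$, giving the claimed $h^{r-1/2}$ rate (and this is where the hypothesis $k\geq 1$ from Theorem~\ref{main_res} is needed).

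For $k=0$ the polynomial ceilings are $k+1=1$ and $k+2=2$, and now $r_{\bm p}=r+1/2>1$ and $r_z=r+3/2>2$ are saturated by the polynomial degree, so $s_{\bm p}=1$ and $s_z=2$, while $s_{\bm q}=r-1/2$ and $s_y=r+1/2$ remain set by the regularity. The four exponents appearing in Theorem~\ref{main_res} are therefore $s_{\bm p}-1/2=1/2$, $s_z-3/2=1/2$, $s_{\bm q}+1/2=r$, and $s_y-1/2=r$; since $r>1>1/2$, the $h^{1/2}$ terms dominate (i.e.\ are the slowest) and absorb the rest, yielding the stated $h^{1/2}$ bounds for $u$, $y$, $\bm p$, $z$.

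The only place where the proof is not entirely mechanical is the verification that $r_\Omega$ is strictly less than $3/2$ so that for $k=1$ the polynomial ceilings are never active, and conversely that $r_\Omega$ is strictly greater than $1$ so that the flux $\bm q$ receives the regularity needed for Theorem~\ref{main_res}; both follow immediately from the interval $\omega\in(\pi/3,2\pi/3)$ and $t^*\in(1/2,1)$. I do not anticipate any genuine obstacle beyond carefully tracking the four exponents through the two specializations.
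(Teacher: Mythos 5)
Your proposal is correct and matches the paper's own (implicit) argument: the paper proves \Cref{cor:main_result} simply by applying \Cref{MT210} with $s=r$ to verify \eqref{eqn:reg_assumption1}--\eqref{eqn:reg_assumption2} and then substituting the resulting exponents into \eqref{eqn:s_rates} and \Cref{main_res}, exactly the bookkeeping you carry out for $k=1$ and $k=0$. Your identification of $r_{\bm q}>1/2$ as the binding constraint, of the saturated ceilings $s_{\bm p}=1$, $s_z=2$ when $k=0$, and of where $k\geq 1$ is needed for the $\bm q$ estimate all agree with the intended proof.
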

\Cref{MT210} gives $ u \in H^r(\Gamma) $, and so the convergence rate for the control is optimal for $ k = 1 $.  Similarly, the convergence rate for the flux $ \bm q $ is optimal for $ k = 1 $.  The convergence rates are suboptimal for the other variables when $ k = 1 $ and for all variables when $ k = 0 $.

Since $ r_\Omega \in (1,3/2) $, when $ k = 1 $ this result guarantees a superlinear convergence rate for all variables except $ \bm{q} $.  Also, if $ \Omega $ is a rectangle (i.e., $ \omega = \pi/2 $), $y_d\in H^{1- \varepsilon}(\Omega) $, and $ k = 1 $, then $ r_\Omega = 3/2- \varepsilon $ and therefore for any $ \varepsilon > 0 $ all variables except $ \bm q $ converge at the rate $ O(h^{3/2 - \varepsilon}) $, and $ \bm q $ converges at the rate $ O(h^{1 - \varepsilon}) $.

\subsection{Preliminary material}
\label{sec:Projectionoperator}

Next, we discuss $ L^2 $ projections, HDG operators $ \mathscr B_1 $ and $ \mathscr B_2 $, and the well-posedness of the HDG equations.

We first define the standard $L^2$ projections $\bm\Pi :  [L^2(\Omega)]^d \to \bm V_h$, $\Pi :  L^2(\Omega) \to W_h$, and $P_M:  L^2(\varepsilon_h) \to M_h$, which satisfy
\begin{equation}\label{L2_projection}
\begin{split}
(\bm\Pi \bm q,\bm r)_{K} &= (\bm q,\bm r)_{K} ,\qquad \forall \bm r\in [{\mathcal P}_{k}(K)]^d,\\
(\Pi y,w)_{K}  &= (y,w)_{K} ,\qquad \forall w\in \mathcal P_{k+1}(K),\\
\left\langle P_M m, \mu\right\rangle_{ e} &= \left\langle  m, \mu\right\rangle_{e }, \quad\;\;\; \forall \mu\in \mathcal P_{k+1}(e).
\end{split}
\end{equation}
In the analysis, we use the following classical results:
\begin{subequations}\label{classical_ine}
	\begin{align}
	\norm {\bm q -\bm{\Pi q}}_{\mathcal T_h} &\lesssim h^{s_{\bm q}} \norm{\bm q}_{s_{\bm q},\Omega},\quad\norm {y -{\Pi y}}_{\mathcal T_h} \lesssim h^{s_{y}} \norm{y}_{s_{y},\Omega},\\
	\norm {y -{\Pi y}}_{\partial\mathcal T_h} &\lesssim h^{s_{y}-\frac 1 2} \norm{y}_{s_{y},\Omega},
	\quad\norm {\bm q\cdot \bm n -\bm{\Pi q}\cdot \bm n}_{\partial \mathcal T_h} \lesssim h^{s_{\bm q}-\frac 12} \norm{\bm q}_{s_{\bm q},\Omega},\\
	\norm {w}_{\partial \mathcal T_h} &\lesssim h^{-\frac 12} \norm {w}_{ \mathcal T_h}, \:\quad \forall w\in W_h,
	\end{align}
\end{subequations}
We have the same projection error bounds for $\bm p$ and $z$.

We define the following HDG operators $ \mathscr B_1$ and $ \mathscr B_2 $.
\begin{align}
\hspace{3em}&\hspace{-3em} \mathscr  B_1( \bm q_h,y_h,\widehat y_h^o;\bm r_1,w_1,\mu_1) \nonumber\\
&=(\bm q_h,\bm r_1)_{\mathcal T_h}-( y_h,\nabla\cdot\bm r_1)_{\mathcal T_h}+\langle \widehat y_h^o,\bm r_1\cdot\bm n\rangle_{\partial\mathcal T_h\backslash \varepsilon_h^\partial}\nonumber\\
&\quad-(\bm q_h+\bm \beta y_h,  \nabla w_1)_{\mathcal T_h}-(\nabla\cdot\bm\beta y_h,w_1)_{\mathcal T_h}\nonumber\\
&\quad+\langle {\bm q}_h\cdot\bm n +(h^{-1}+\tau_1)y_h, w_1\rangle_{\partial\mathcal T_h}+\langle (\bm\beta\cdot\bm n -h^{-1}-\tau_1) \widehat y_h^o,w_1\rangle_{\partial\mathcal T_h\backslash \varepsilon_h^\partial}\nonumber\\
&\quad -\langle  {\bm q}_h\cdot\bm n+\bm \beta\cdot\bm n\widehat y_h^o +(h^{-1}+\tau_1)(y_h-\widehat y_h^o), \mu_1\rangle_{\partial\mathcal T_h\backslash\varepsilon^{\partial}_h},\label{def_B1}\\
\hspace{3em}&\hspace{-3em} \mathscr B_2 (\bm p_h,z_h,\widehat z_h^o;\bm r_2, w_2,\mu_2)\nonumber\\
& =(\bm p_h,\bm r_2)_{\mathcal T_h}-( z_h,\nabla\cdot\bm r_2)_{\mathcal T_h}+\langle \widehat z_h^o,\bm r_2\cdot\bm n\rangle_{\partial\mathcal T_h\backslash\varepsilon_h^\partial}-(\bm p_h-\bm \beta z_h,  \nabla w_2)_{\mathcal T_h}\nonumber\\
&\quad+\langle {\bm p}_h\cdot\bm n +(h^{-1}+\tau_2) z_h, w_2\rangle_{\partial\mathcal T_h} -\langle (\bm \beta\cdot\bm n + h^{-1}+\tau_2)\widehat z_h^o ,w_2\rangle_{\partial\mathcal T_h\backslash\varepsilon_h^\partial}\nonumber\\
&\quad-\langle  {\bm p}_h\cdot\bm n-\bm \beta\cdot\bm n\widehat z_h^o +(h^{-1}+\tau_2)(z_h-\widehat z_h^o), \mu_2\rangle_{\partial\mathcal T_h\backslash\varepsilon^{\partial}_h}\label{def_B2}.
\end{align}
By the definition of $\mathscr B_1$ and $\mathscr B_2$,  we can rewrite the HDG formulation of the optimality system \eqref{HDG_discrete2}, as follows: find $({\bm{q}}_h,{\bm{p}}_h,y_h,z_h,\widehat y_h^o,\widehat z_h^o,u_h)\in \bm{V}_h\times\bm{V}_h\times W_h \times W_h\times M_h(o)\times M_h(o)\times M_h(\partial)$  such that
\begin{subequations}\label{HDG_full_discrete}
	\begin{align}
	\mathscr B_1(\bm q_h,y_h,\widehat y_h^o;\bm r_1,w_1,\mu_1)&=( f, w_1)_{\mathcal T_h} - \langle u_h,\bm r_1\cdot\bm n\rangle_{\varepsilon_h^\partial} \nonumber \\
	&\quad-\langle (\bm\beta\cdot\bm n-h^{-1}-\tau_1) u_h, w_1\rangle_{\varepsilon_h^\partial},\label{HDG_full_discrete_a}\\
	\mathscr B_2(\bm p_h,z_h,\widehat z_h^o;\bm r_2,w_2,\mu_2)&=(y_h-y_d,w_2)_{\mathcal T_h},\label{HDG_full_discrete_b}\\
	\gamma^{-1}\langle {\bm{p}}_h\cdot \bm{n} + h^{-1} z_h + \tau_2 z_h, \mu_3\rangle_{{{\varepsilon_h^{\partial}}}} &= -\langle u_h, \mu_3 \rangle_{{\varepsilon_h^{\partial}}}, \label{HDG_full_discrete_e}
	\end{align}
\end{subequations}
for all $\left(\bm{r}_1, \bm{r}_2, w_1, w_2, \mu_1, \mu_2, {\mu}_3\right)\in \bm V_h\times\bm V_h\times W_h\times W_h\times M_h(o)\times M_h(o)\times M_h(\partial) $.

Next, we present a basic property of the operators $\mathscr B_1$ and $\mathscr B_2$,   and show the HDG equations \eqref{HDG_full_discrete} have a unique solution.
\begin{lemma}\label{property_B}
	For any $ ( \bm v_h, w_h, \mu_h ) \in \bm V_h \times W_h \times M_h(o) $, we have
	\begin{align*}
	\hspace{2em}&\hspace{-2em} \mathscr B_1(\bm v_h,w_h,\mu_h;\bm v_h,w_h,\mu_h)\\
	&=(\bm v_h,\bm v_h)_{\mathcal T_h}+ \langle (h^{-1}+\tau_1 - \frac 12 \bm \beta\cdot\bm n)(w_h-\mu_h),w_h-\mu_h\rangle_{\partial\mathcal T_h\backslash \varepsilon_h^\partial}\\
	&\quad-\frac 1 2(\nabla\cdot\bm\beta w_h,w_h)_{\mathcal T_h} +\langle (h^{-1}+\tau_1-\frac12\bm \beta\cdot\bm n) w_h,w_h\rangle_{\varepsilon_h^\partial},\\
	\hspace{2em}&\hspace{-2em}\mathscr B_2(\bm v_h,w_h,\mu_h;\bm v_h,w_h,\mu_h)\\
	&=(\bm v_h,\bm v_h)_{\mathcal T_h}+ \langle (h^{-1}+\tau_2 + \frac 12 \bm \beta\cdot\bm n)(w_h-\mu_h),w_h-\mu_h\rangle_{\partial\mathcal T_h\backslash \varepsilon_h^\partial}\\
	&\quad-\frac 1 2(\nabla\cdot\bm\beta w_h,w_h)_{\mathcal T_h} +\langle (h^{-1}+\tau_2+\frac12\bm \beta\cdot\bm n) w_h,w_h\rangle_{\varepsilon_h^\partial}.
	\end{align*}
\end{lemma}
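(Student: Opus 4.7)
The plan is to substitute $(\bm r_1,w_1,\mu_1)=(\bm v_h,w_h,\mu_h)$ in the definition \eqref{def_B1} of $\mathscr B_1$, expand all eleven terms, and reorganize. The obvious piece $(\bm v_h,\bm v_h)_{\mathcal T_h}$ appears immediately. Next I would integrate by parts elementwise on the diffusion pair $-(w_h,\nabla\cdot\bm v_h)_{\mathcal T_h}-(\bm v_h,\nabla w_h)_{\mathcal T_h}=-\langle \bm v_h\cdot\bm n,w_h\rangle_{\partial\mathcal T_h}$, and observe that this cancels exactly against $\langle \bm v_h\cdot\bm n,w_h\rangle_{\partial\mathcal T_h}$ coming from the fifth line of $\mathscr B_1$. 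Likewise, the term $\langle \mu_h,\bm v_h\cdot\bm n\rangle_{\partial\mathcal T_h\setminus\varepsilon_h^\partial}$ cancels against $-\langle \bm v_h\cdot\bm n,\mu_h\rangle_{\partial\mathcal T_h\setminus\varepsilon_h^\partial}$ from the last line, eliminating all $\bm v_h$-boundary contributions.

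Next I would handle the convection terms. Using the same identity already used in the positive-definiteness proof for $A_{18}$,
\[
(\bm\beta w_h,\nabla w_h)_{\mathcal T_h}=\tfrac12\langle \bm\beta\cdot\bm n\,w_h,w_h\rangle_{\partial\mathcal T_h}-\tfrac12(\nabla\cdot\bm\beta\,w_h,w_h)_{\mathcal T_h},
\]
the remaining volume contribution $-(\bm\beta w_h,\nabla w_h)_{\mathcal T_h}-(\nabla\cdot\bm\beta\,w_h,w_h)_{\mathcal T_h}$ reduces to $-\tfrac12\langle \bm\beta\cdot\bm n\,w_h,w_h\rangle_{\partial\mathcal T_h}-\tfrac12(\nabla\cdot\bm\beta\,w_h,w_h)_{\mathcal T_h}$. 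Splitting the boundary integral into $\varepsilon_h^\partial$ and $\partial\mathcal T_h\setminus\varepsilon_h^\partial$ already produces the $\langle (h^{-1}+\tau_1-\tfrac12\bm\beta\cdot\bm n)w_h,w_h\rangle_{\varepsilon_h^\partial}$ term and the $-\tfrac12(\nabla\cdot\bm\beta w_h,w_h)_{\mathcal T_h}$ term of the claimed identity.

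The main obstacle is the algebra on $\partial\mathcal T_h\setminus\varepsilon_h^\partial$: what remains, with $\alpha:=h^{-1}+\tau_1$ and $\beta_n:=\bm\beta\cdot\bm n$, is
\[
\langle(\alpha-\tfrac12\beta_n)w_h,w_h\rangle+\langle(\beta_n-\alpha)\mu_h,w_h\rangle-\langle\beta_n\mu_h,\mu_h\rangle-\langle\alpha(w_h-\mu_h),\mu_h\rangle,
\]
all taken over $\partial\mathcal T_h\setminus\varepsilon_h^\partial$. Grouping by $w_h^2$, $w_h\mu_h$ and $\mu_h^2$ yields the symmetric form $(\alpha-\tfrac12\beta_n)w_h^2+(\beta_n-2\alpha)w_h\mu_h+(\alpha-\beta_n)\mu_h^2$, which differs from $(\alpha-\tfrac12\beta_n)(w_h-\mu_h)^2$ only by $-\tfrac12\beta_n\mu_h^2$. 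The key observation that closes the proof is that $\mu_h\in M_h(o)$ is single-valued on every interior face while the outward normal $\bm n$ flips sign between the two elements sharing that face; hence $\langle\beta_n\mu_h,\mu_h\rangle_{\partial\mathcal T_h\setminus\varepsilon_h^\partial}=0$ and the discrepancy vanishes. This yields the claimed interior-face term $\langle(h^{-1}+\tau_1-\tfrac12\bm\beta\cdot\bm n)(w_h-\mu_h),w_h-\mu_h\rangle_{\partial\mathcal T_h\setminus\varepsilon_h^\partial}$.

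For $\mathscr B_2$ I would follow exactly the same script. The structural differences are that $\bm\beta$ enters with the opposite sign in the convection terms and the $(\nabla\cdot\bm\beta y_h,w_1)$ term is absent. Integration by parts gives $(\bm\beta z_h,\nabla z_h)_{\mathcal T_h}=\tfrac12\langle\beta_n z_h,z_h\rangle_{\partial\mathcal T_h}-\tfrac12(\nabla\cdot\bm\beta z_h,z_h)_{\mathcal T_h}$, and since this term now appears with a plus sign in $\mathscr B_2$, the final coefficient becomes $h^{-1}+\tau_2+\tfrac12\bm\beta\cdot\bm n$ instead of $h^{-1}+\tau_1-\tfrac12\bm\beta\cdot\bm n$. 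The $-\tfrac12(\nabla\cdot\bm\beta w_h,w_h)_{\mathcal T_h}$ volume term reappears because although $\mathscr B_2$ lacks the $-(\nabla\cdot\bm\beta z_h,w_2)$ term, it also lacks the cancellation of $\tfrac12(\nabla\cdot\bm\beta w_h,w_h)_{\mathcal T_h}$ from the convection identity. The single-valuedness of $\mu_h$ on interior faces is used again to eliminate the stray $\beta_n\mu_h^2$ contribution, completing the identity.
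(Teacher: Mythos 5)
Your proposal is correct and follows essentially the same route as the paper's proof: cancel the $\bm v_h$–$w_h$ cross terms by elementwise integration by parts, apply the identity $(\bm\beta w_h,\nabla w_h)_{\mathcal T_h}=\tfrac12\langle\bm\beta\cdot\bm n\,w_h,w_h\rangle_{\partial\mathcal T_h}-\tfrac12(\nabla\cdot\bm\beta\,w_h,w_h)_{\mathcal T_h}$, regroup the interior-face terms into the perfect square, and kill the leftover $\tfrac12\langle\bm\beta\cdot\bm n\,\mu_h,\mu_h\rangle_{\partial\mathcal T_h\backslash\varepsilon_h^\partial}$ via the single-valuedness of $\mu_h$ and the sign flip of $\bm n$ across interior faces. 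Your sign bookkeeping for $\mathscr B_2$ (coefficient $\tau_2+\tfrac12\bm\beta\cdot\bm n$ and the reappearance of $-\tfrac12(\nabla\cdot\bm\beta\,w_h,w_h)_{\mathcal T_h}$) is also accurate, matching the paper's remark that the second identity follows by the same argument.
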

\begin{proof}
	We only prove the first identity; the second can be obtained by the same argument.
	\begin{align*}
	\hspace{3em}&\hspace{-3em} 	\mathscr B_1(\bm v_h,w_h,\mu_h;\bm v_h,w_h,\mu_h)\\
	&=(\bm v_h,\bm v_h)_{\mathcal T_h}-( w_h,\nabla\cdot\bm v_h)_{\mathcal T_h}+\langle \mu_h,\bm v_h\cdot\bm n\rangle_{\partial\mathcal T_h\backslash \varepsilon_h^\partial}\\
	& \quad -(\bm v_h+\bm \beta w_h,  \nabla w_h)_{\mathcal T_h}-(\nabla\cdot\bm\beta w_h,w_h)_{\mathcal T_h}\\
	& \quad +\langle {\bm v}_h\cdot\bm n +(h^{-1}+\tau_1)w_h,w_h\rangle_{\partial\mathcal T_h} +\langle (\bm\beta\cdot\bm n -h^{-1}-\tau_1) \mu_h,w_h\rangle_{\partial\mathcal T_h\backslash \varepsilon_h^\partial}\\
	& \quad-\langle  {\bm v}_h\cdot\bm n+\bm \beta\cdot\bm n\mu_h +(h^{-1}+\tau_1)(w_h - \mu_h),\mu_h \rangle_{\partial\mathcal T_h\backslash\varepsilon^{\partial}_h}\\
	&=(\bm v_h,\bm v_h)_{\mathcal T_h}-(\bm \beta w_h,  \nabla w_h)_{\mathcal T_h} -(\nabla\cdot\bm\beta w_h,w_h)_{\mathcal T_h}\\
	&\quad+\langle  (h^{-1} +\tau_1) w_h,w_h\rangle_{\partial\mathcal T_h}+\langle (\bm\beta\cdot\bm n -h^{-1}-\tau_1) \mu_h,w_h\rangle_{\partial\mathcal T_h\backslash \varepsilon_h^\partial}\\
	&\quad -\langle  \bm \beta\cdot\bm n \mu_h +(h^{-1}+ \tau_1)(w_h - \mu_h ),\mu_h\rangle_{\partial\mathcal T_h\backslash\varepsilon^{\partial}_h}.
	\end{align*}
	Moreover,
	\begin{align*}
	(\bm \beta w_h,\nabla w_h)_{\mathcal T_h}&=(\bm \beta\cdot\nabla w_h,w_h)_{\mathcal T_h}=(\nabla\cdot(\bm \beta w_h),w_h)_{\mathcal T_h}-(\nabla\cdot\bm \beta w_h,w_h)_{\mathcal T_h}\\
	&=\langle\bm \beta\cdot\bm n w_h,w_h\rangle_{\partial\mathcal T_h}-(\bm \beta w_h,\nabla w_h)_{\mathcal T_h}-(\nabla\cdot\bm \beta w_h,w_h)_{\mathcal T_h},
	\end{align*}
	which implies
	\begin{align*}
	(\bm \beta w_h,\nabla w_h)_{\mathcal T_h}=\frac12\langle\bm \beta\cdot\bm n w_h,w_h\rangle_{\partial\mathcal T_h}-\frac12(\nabla\cdot\bm \beta w_h,w_h)_{\mathcal T_h}.
	\end{align*}
	Then we obtain
	\begin{align*}
	\hspace{1em}&\hspace{-1em}  \mathscr B_1 (\bm v_h,w_h,\mu_h;\bm v_h,w_h,\mu_h)\\
	&=(\bm v_h,\bm v_h)_{\mathcal T_h}+ \langle (h^{-1} + \tau_1 - \frac 12 \bm \beta\cdot\bm n)(w_h-\mu_h),w_h-\mu_h\rangle_{\partial\mathcal T_h\backslash \varepsilon_h^\partial}\\
	&\quad  -\frac{1}{2}(\nabla\cdot\bm\beta w_h,w_h)_{\mathcal T_h} + \langle (h^{-1}+\tau_1-\frac12\bm \beta\cdot\bm n) w_h,w_h\rangle_{\varepsilon_h^\partial}-\frac12\langle\bm \beta\cdot\bm n \mu_h,\mu_h\rangle_{\partial\mathcal T_h\backslash \varepsilon_h^\partial}.
	\end{align*}
	Since $ \mu_h$ is single-valued across the interfaces, we have
	\begin{align*}
	-\frac12\langle\bm \beta\cdot\bm n\mu_h,\mu_h\rangle_{\partial\mathcal T_h\backslash\varepsilon_h^\partial}=0.
	\end{align*}
	This completes the proof.
\end{proof}

Next, we give a property of the HDG operators $\mathscr B_1$ and $\mathscr B_2$ that is critical to our error analysis of the method.
\begin{lemma}\label{identical_equa}
	If \textbf{(A2)} holds, then
	$$\mathscr B_1 (\bm q_h,y_h,\widehat y_h^o;\bm p_h,-z_h,-\widehat z_h^o) + \mathscr B_2 (\bm p_h,z_h,\widehat z_h^o;-\bm q_h,y_h,\widehat y_h^o) = 0.$$
\end{lemma}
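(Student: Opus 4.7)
The plan is to verify the identity by direct, term-by-term manipulation: expand both bilinear forms using the definitions \eqref{def_B1}--\eqref{def_B2}, apply element-wise integration by parts on the volume terms so that everything becomes a surface integral on $\partial\mathcal T_h$ (or on $\partial\mathcal T_h\setminus\varepsilon_h^\partial$), and then group terms and invoke assumption \textbf{(A2)} together with the single-valuedness of $\widehat y_h^o,\widehat z_h^o$ on interior faces. The pure $L^2$-inner products of the fluxes, $(\bm q_h,\bm p_h)_{\mathcal T_h}$ and $(\bm p_h,-\bm q_h)_{\mathcal T_h}$, cancel immediately, and the rest of the calculation is bookkeeping.

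For the remaining volume terms I would pair up $-(y_h,\nabla\cdot\bm p_h)_{\mathcal T_h}+(\bm p_h,\nabla y_h)_{\mathcal T_h}$ and $(z_h,\nabla\cdot\bm q_h)_{\mathcal T_h}-(\bm q_h,\nabla z_h)_{\mathcal T_h}$ and apply integration by parts on each element to rewrite them as $-\langle \bm p_h\cdot\bm n,y_h\rangle_{\partial\mathcal T_h}+\langle \bm q_h\cdot\bm n,z_h\rangle_{\partial\mathcal T_h}$. For the convection pieces I would use the symmetric identity
\[
(\bm\beta y_h,\nabla z_h)_{\mathcal T_h}+(\bm\beta z_h,\nabla y_h)_{\mathcal T_h}=(\bm\beta,\nabla(y_h z_h))_{\mathcal T_h}=-(\nabla\cdot\bm\beta\,y_h,z_h)_{\mathcal T_h}+\langle \bm\beta\cdot\bm n y_h,z_h\rangle_{\partial\mathcal T_h},
\]
so the leftover $(\nabla\cdot\bm\beta\,y_h,z_h)_{\mathcal T_h}$ from $\mathscr B_1$ is annihilated and only $\langle\bm\beta\cdot\bm n y_h,z_h\rangle_{\partial\mathcal T_h}$ survives. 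At this point all volume contributions have been converted to boundary integrals.

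For the boundary bookkeeping, the traces $\widehat{\bm q}_h\cdot\bm n$-style terms in $\mathscr B_1$ contribute $\langle \bm q_h\cdot\bm n,-z_h\rangle_{\partial\mathcal T_h}$ and $\langle \bm q_h\cdot\bm n,\widehat z_h^o\rangle_{\partial\mathcal T_h\setminus\varepsilon_h^\partial}$, which cancel against the $\langle \bm q_h\cdot\bm n,z_h\rangle_{\partial\mathcal T_h}$ produced by the IBP above and the $-\langle \widehat z_h^o,\bm q_h\cdot\bm n\rangle_{\partial\mathcal T_h\setminus\varepsilon_h^\partial}$ from $\mathscr B_2$; the analogous cancellations happen for $\bm p_h\cdot\bm n$. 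The remaining $\langle y_h,z_h\rangle_{\partial\mathcal T_h}$-type terms combine into $\langle(\bm\beta\cdot\bm n-\tau_1+\tau_2)y_h,z_h\rangle_{\partial\mathcal T_h}$, which is zero by \textbf{(A2)}. Substituting $h^{-1}+\tau_1=h^{-1}+\tau_2+\bm\beta\cdot\bm n$ into the remaining $(h^{-1}+\tau_1)$ factors on $\partial\mathcal T_h\setminus\varepsilon_h^\partial$ and expanding, all pairings involving $\widehat y_h^o z_h$, $y_h\widehat z_h^o$, and $\widehat y_h^o\widehat z_h^o$ cancel between $\mathscr B_1$ and $\mathscr B_2$. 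The only potential leftovers are terms of the form $\langle\bm\beta\cdot\bm n\widehat y_h^o,\widehat z_h^o\rangle_{\partial\mathcal T_h\setminus\varepsilon_h^\partial}$, which vanish by the same single-valued-trace argument used at the end of \Cref{property_B}: on each interior face the two contributions from $K^+$ and $K^-$ carry opposite signs of $\bm\beta\cdot\bm n$ while the hatted variables agree. The hardest part of this proof is purely organizational — keeping track of the many similar-looking terms and verifying that every sign and every coefficient lines up — but no new ideas beyond integration by parts, \textbf{(A2)}, and single-valuedness are needed.
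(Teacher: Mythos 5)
Your proposal is correct and follows essentially the same route as the paper's own proof: expand both operators, integrate by parts elementwise so the volume terms become boundary pairings, cancel the flux and trace pairings, and observe that the surviving terms carry the factor $\tau_2+\bm\beta\cdot\bm n-\tau_1$, which vanishes under \textbf{(A2)}. Your explicit treatment of the leftover $\langle \bm\beta\cdot\bm n\,\widehat y_h^o,\widehat z_h^o\rangle_{\partial\mathcal T_h\backslash\varepsilon_h^\partial}$ term via single-valuedness of the interior traces is a correct (and slightly more careful) rendering of the same bookkeeping the paper compresses into ``integration by parts gives.''
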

\begin{proof}
	By the definition of $ \mathscr B_1 $  and $ \mathscr B_2 $,
	\begin{align*}
	\hspace{1em}&\hspace{-1em}  \mathscr B_1 (\bm q_h,y_h,\widehat y_h^o;\bm p_h,-z_h,-\widehat z_h^o) + \mathscr B_2 (\bm p_h,z_h,\widehat z_h^o;-\bm q_h,y_h,\widehat y_h^o)\\
	&=(\bm{q}_h, \bm p_h)_{{\mathcal{T}_h}}- (y_h, \nabla\cdot \bm p_h)_{{\mathcal{T}_h}}+\langle \widehat{y}_h^o, \bm p_h\cdot \bm{n} \rangle_{\partial{{\mathcal{T}_h}}\backslash {\varepsilon_h^{\partial}}} \\
	&\quad + (\bm{q}_h + \bm{\beta} y_h, \nabla z_h)_{{\mathcal{T}_h}} + (\nabla\cdot\bm \beta y_h,  z_h)_{{\mathcal{T}_h}}  - \langle\bm q_h\cdot\bm n + (h^{-1}+\tau_1) y_h , z_h \rangle_{\partial{{\mathcal{T}_h}}}\\
	&\quad  - \langle(\bm\beta\cdot \bm n-\tau_1-h^{-1})\widehat y_h^o, z_h \rangle_{\partial{{\mathcal{T}_h}}\backslash \varepsilon_h^{\partial}} \\
	&\quad+ \langle\bm q_h\cdot\bm n + \bm{\beta}\cdot\bm n \widehat y_h^o  +(h^{-1}+\tau_1)(y_h-\widehat y_h^o), \widehat z_h^o  \rangle_{\partial{{\mathcal{T}_h}}\backslash\varepsilon_h^{\partial}}\\
	&\quad-(\bm{p}_h, \bm q_h)_{{\mathcal{T}_h}}+ (z_h, \nabla\cdot \bm q_h)_{{\mathcal{T}_h}} -\langle \widehat{z}_h^o, \bm q_h \cdot \bm{n} \rangle_{\partial{{\mathcal{T}_h}}\backslash {\varepsilon_h^{\partial}}}  - (\bm{p}_h - \bm{\beta} z_h, \nabla y_h)_{{\mathcal{T}_h}} \\
	&\quad+\langle\bm p_h\cdot\bm n +(h^{-1}+\tau_2) z_h , y_h \rangle_{\partial{{\mathcal{T}_h}}} - \langle (\bm{\beta}\cdot \bm n+\tau_2 + h^{-1}) \widehat z_h^o, y_h \rangle_{\partial{{\mathcal{T}_h}}\backslash \varepsilon_h^{\partial}}\\
	&\quad- \langle\bm p_h\cdot\bm n -\bm{\beta} \cdot\bm n\widehat z_h^o +(h^{-1}+ \tau_2) (z_h-\widehat z_h^o), \widehat y_h^o \rangle_{\partial{{\mathcal{T}_h}}\backslash\varepsilon_h^{\partial}}.
	\end{align*}
	Integration by parts gives
	\begin{align*}
	\mathscr B_1 &(\bm q_h,y_h,\widehat y_h^o;\bm p_h,-z_h,-\widehat z_h^o) + \mathscr B_2 (\bm p_h,z_h,\widehat z_h^o;-\bm q_h,y_h,\widehat y_h^o)\\
	&=\langle (\tau_2 + \bm{\beta}\cdot\bm n-\tau_1) y_h, z_h \rangle_{\partial\mathcal T_h} + \langle (\tau_2 + \bm{\beta}\cdot\bm n-\tau_1) \widehat y_h^o, \widehat z_h^o \rangle_{\partial\mathcal T_h\backslash\varepsilon_h^\partial}.
	\end{align*}
	The proof is complete by assumption \textbf{(A2)}.
\end{proof}

\begin{prop}\label{ex_uni}
	If \textbf{(A2)} holds, there exists a unique solution of the HDG equations \eqref{HDG_full_discrete}.
\end{prop}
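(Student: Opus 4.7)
Since the HDG system \eqref{HDG_full_discrete} is a square linear system on the finite dimensional space $\bm V_h\times \bm V_h\times W_h\times W_h\times M_h(o)\times M_h(o)\times M_h(\partial)$, existence follows from uniqueness. I therefore take $f=0$ and $y_d=0$ and aim to show that every solution must be identically zero.

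First, I mimic the algebraic identity of \Cref{identical_equa} on the discrete system itself. Testing \eqref{HDG_full_discrete_a} with $(\bm r_1,w_1,\mu_1)=(\bm p_h,-z_h,-\widehat z_h^o)$ and \eqref{HDG_full_discrete_b} with $(\bm r_2,w_2,\mu_2)=(-\bm q_h,y_h,\widehat y_h^o)$, and then adding, the left hand sides sum to zero by \Cref{identical_equa}, while the right hand sides combine into
\[
0=\|y_h\|_{\mathcal T_h}^2-\langle u_h,\bm p_h\cdot\bm n\rangle_{\varepsilon_h^\partial}+\langle(\bm\beta\cdot\bm n-h^{-1}-\tau_1)u_h,z_h\rangle_{\varepsilon_h^\partial}.
\]
Next, I test the discrete optimality condition \eqref{HDG_full_discrete_e} with $\mu_3=u_h$, solve for $\langle\bm p_h\cdot\bm n,u_h\rangle_{\varepsilon_h^\partial}$, and substitute. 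The coefficient in front of the remaining boundary pairing $\langle z_h,u_h\rangle_{\varepsilon_h^\partial}$ becomes $\tau_2+\bm\beta\cdot\bm n-\tau_1$, which vanishes identically by assumption \textbf{(A2)}. What is left is
\[
\|y_h\|_{\mathcal T_h}^2+\gamma\|u_h\|_{\varepsilon_h^\partial}^2=0,
\]
so $y_h=0$ in $\Omega$ and $u_h=0$ on $\varepsilon_h^\partial$.

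With $y_h=0$ and $u_h=0$, the equation \eqref{HDG_full_discrete_a} becomes $\mathscr B_1(\bm q_h,0,\widehat y_h^o;\cdot)=0$; testing diagonally and applying \Cref{property_B} gives
\[
\|\bm q_h\|_{\mathcal T_h}^2+\langle(h^{-1}+\tau_1-\tfrac12\bm\beta\cdot\bm n)\widehat y_h^o,\widehat y_h^o\rangle_{\partial\mathcal T_h\setminus\varepsilon_h^\partial}=0,
\]
so by \eqref{eqn:tau1_condition} we get $\bm q_h=0$ and $\widehat y_h^o=0$. Similarly, since $y_h-y_d=0$, testing \eqref{HDG_full_discrete_b} diagonally and applying \Cref{property_B} for $\mathscr B_2$ together with \textbf{(A3)} and $\nabla\cdot\bm\beta\le 0$ gives $\bm p_h=0$, $z_h=\widehat z_h^o$ on $\partial\mathcal T_h\setminus\varepsilon_h^\partial$, and $z_h=0$ on $\varepsilon_h^\partial$. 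Finally, to remove $z_h$ from the interior I use equation \eqref{HDG_discrete2_c} with $\bm p_h=0$, integrate by parts element-by-element, and use the boundary/interface information just derived to reduce the equation to $(\nabla z_h,\bm r_2)_{\mathcal T_h}=0$ for all $\bm r_2\in\bm V_h$; taking $\bm r_2=\nabla z_h\in\bm V_h$ shows $z_h$ is piecewise constant, hence globally zero by connectivity of $\mathcal T_h$ and $z_h=0$ on $\Gamma$, after which $\widehat z_h^o=z_h=0$ as well. The main obstacle is the very first step: spotting that the combination dictated by \Cref{identical_equa} together with the optimality equation \eqref{HDG_full_discrete_e} causes the $\langle z_h,u_h\rangle_{\varepsilon_h^\partial}$ cross term to cancel precisely under assumption \textbf{(A2)}, which is exactly what makes the energy estimate close.
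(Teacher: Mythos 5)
Your proposal is correct and follows essentially the same route as the paper: the same antisymmetric test choice combined with \Cref{identical_equa} and the discrete optimality condition to get $\|y_h\|_{\mathcal T_h}^2+\gamma\|u_h\|_{\varepsilon_h^\partial}^2=0$ (you merely spell out the cancellation of the $\langle z_h,u_h\rangle_{\varepsilon_h^\partial}$ cross term under \textbf{(A2)} that the paper leaves implicit), followed by the diagonal tests with \Cref{property_B} and the final element-by-element integration by parts in \eqref{HDG_discrete2_c} with $\bm r_2=\nabla z_h$ to conclude $z_h=\widehat z_h^o=0$. No gaps.
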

\begin{proof}
	Since the system \eqref{HDG_full_discrete} is finite dimensional, we only need to prove the uniqueness.  Therefore, we assume $y_d = f =0$ and we show the system \eqref{HDG_full_discrete} only has the trivial solution.
	
	First, take $(\bm r_1,w_1,\mu_1) = (\bm p_h,-z_h,-\widehat z_h^o)$, $(\bm r_2,w_2,\mu_2) = (-\bm q_h,y_h,\widehat y_h^o)$, and $w_3 = -\gamma u_h $ in the HDG equations \eqref{HDG_full_discrete_a},  \eqref{HDG_full_discrete_b}, and \eqref{HDG_full_discrete_e}, respectively, by  \Cref{identical_equa} and sum to obtain
	\begin{align*}
	\hspace{1em}&\hspace{-1em}  \mathscr B_1  (\bm q_h,y_h,\widehat y_h^o;\bm p_h,-z_h,-\widehat z_h^o) + \mathscr B_2 (\bm p_h,z_h,\widehat z_h^o;-\bm q_h,y_h,\widehat y_h^o) \\
	& =	(y_h,y_h)_{\mathcal T_h} + \gamma \norm {u_h}^2_{\varepsilon_h^\partial} \\
	& = 0.
	\end{align*}
	This implies $y_h =  u_h = 0$ since $\gamma>0$.
	
	Next, taking $(\bm r_1,w_1,\mu_1) = (\bm q_h,y_h,\widehat y_h^o)$ and $(\bm r_2,w_2,\mu_2) = (\bm p_h,z_h,\widehat z_h^o)$ in Lemma \ref{property_B} gives $\bm q_h= \bm p_h= \bm 0 $, $ \widehat y_h^o =0$,  $ z_h = 0$ on $\varepsilon_h^{\partial}$, and $ z_h - \widehat z_h^o=0$ on $\partial\mathcal T_h\backslash \varepsilon_h^{\partial}$.  Also, since $\widehat z_h=0$ on $\varepsilon_h^{\partial}$ we have
	\begin{align}\label{uniqu}
	z_h-\widehat z_h =0.
	\end{align}
	Substituting \eqref{uniqu} into \eqref{HDG_discrete2_c}, and remembering again $\widehat z_h = 0$ on $\varepsilon_h^\partial$, we get
	\begin{align*}
	- (z_h, \nabla\cdot \bm{r_2})_{{\mathcal{T}_h}}+\langle  z_h, \bm{r_2}\cdot \bm{n} \rangle_{\partial{{\mathcal{T}_h}}} =0.
	\end{align*}
	Integrate by parts, and take $\bm r_2 = \nabla z_h$ to obtain
	\begin{align*}
	(\nabla z_h, \nabla z_h)_{{\mathcal{T}_h}}=0.
	\end{align*}
	Thus, $z_h$ is  constant on each $K\in\mathcal T_h$, and also $ z_h = \widehat z_h $ on $\partial\mathcal T_h $.  Since $\widehat z_h=0$ on $\varepsilon_h^{\partial}$ and single valued on each face, we have $z_h=0$ on each $K\in\mathcal T_h$, and therefore also $ \widehat{z}_h^o = 0 $.
\end{proof}

\subsection{Proof of Main Result}
To prove the main result, we follow the strategy of our earlier work \cite{HuShenSinglerZhangZheng_HDG_Dirichlet_control1} and split the proof into
seven steps.  We consider the following auxiliary problem: find $$({\bm{q}}_h(u),{\bm{p}}_h(u), y_h(u), z_h(u), {\widehat{y}}_h^o(u), {\widehat{z}}_h^o(u))\in \bm{V}_h\times\bm{V}_h\times W_h \times W_h\times M_h(o)\times M_h(o)$$ such that
\begin{subequations}\label{HDG_inter_u}
	\begin{align}
	\mathscr B_1(\bm q_h(u), y_h(u), \widehat{y}_h(u); \bm r_1, w_1, \mu_1)&=( f, w_1)_{\mathcal T_h} - \langle P_M u, \bm r_1\cdot\bm n\rangle_{\varepsilon_h^\partial} \nonumber \\
	& \quad -\langle (\bm\beta\cdot\bm n-h^{-1}-\tau_1) P_M u, w_1\rangle_{\varepsilon_h^\partial},\label{HDG_u_a}\\
	\mathscr B_2(\bm p_h(u), z_h(u), \widehat{z}_h(u); \bm r_2,  w_2, \mu_2)&=(y_h(u) - y_d, w_2)_{\mathcal T_h},\label{HDG_u_b}
	\end{align}
\end{subequations}
for all $\left(\bm{r}_1, \bm{r}_2,w_1,w_2,\mu_1,\mu_2\right)\in \bm{V}_h\times\bm{V}_h \times W_h\times W_h\times M_h(o)\times M_h(o)$. We first bound the error between the solutions of the auxiliary problem and the mixed form \eqref{mixed_a} -  \eqref{mixed_d}  of the optimality system. We use the following notation:
\begin{equation}\label{notation_1}
\begin{split}
\delta^{\bm q} &=\bm q-{\bm\Pi}\bm q,  \qquad\qquad\qquad \qquad\qquad\qquad\;\;\;\;\varepsilon^{\bm q}_h={\bm\Pi} \bm q-\bm q_h(u),\\
\delta^y&=y- {\Pi} y, \qquad\qquad\qquad \qquad\qquad\qquad\;\;\;\; \;\varepsilon^{y}_h={\Pi} y-y_h(u),\\
\delta^{\widehat y} &= y-P_My,  \qquad\qquad\qquad\qquad\qquad\qquad \;\;\; \varepsilon^{\widehat y}_h=P_M y-\widehat{y}_h(u),\\
\widehat {\bm\delta}_1 &= \delta^{\bm q}\cdot\bm n + \bm{\beta}\cdot\bm n \delta^{\widehat y} + (h^{-1}+\tau_1)(\delta^y - \delta^{\widehat y}),
\end{split}
\end{equation}
where $\widehat y_h(u) = \widehat y_h^o(u)$ on $\varepsilon_h^o$ and $\widehat y_h(u) = P_M u$ on $\varepsilon_h^{\partial}$.  Note that this implies $\varepsilon_h^{\widehat y} = 0$ on $\varepsilon_h^{\partial}$.

\subsubsection{Step 1: The error equation for part 1 of the auxiliary problem \eqref{HDG_u_a}.}
\label{subsec:proof_step1}

\begin{lemma}\label{lemma:step1_first_lemma}
	We have
	\begin{align}\label{error_equation_L2k1}
	\mathscr B_1 (\varepsilon_h^{\bm q},\varepsilon_h^{ y}, \varepsilon_h^{\widehat y}, \bm r_1, w_1, \mu_1) &= ( \bm \beta \delta^y, \nabla w_1)_{{\mathcal{T}_h}}
	+(\nabla\cdot\bm{\beta}\delta^y,w_1)_{\mathcal T_h}\nonumber\\
	& \quad - \langle \widehat{\bm \delta}_1, w_1 \rangle_{\partial{{\mathcal{T}_h}}} + \langle \widehat{\bm \delta}_1, \mu_1 \rangle_{\partial{{\mathcal{T}_h}}\backslash \varepsilon_h^{\partial}}.
	\end{align}
\end{lemma}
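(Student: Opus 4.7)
The plan is to exploit linearity of $\mathscr B_1$ together with the decompositions $\bm\Pi\bm q = \bm q - \delta^{\bm q}$, $\Pi y = y - \delta^y$, and $P_M y = y - \delta^{\widehat y}$. Since $y|_\Gamma = u$ gives $P_M y = P_M u = \widehat y_h(u)$ on $\varepsilon_h^\partial$, the decomposition $\varepsilon_h^{\widehat y} = P_M y - \widehat y_h(u)$ is consistent on all of $\varepsilon_h$, and linearity yields
\begin{multline*}
\mathscr B_1(\varepsilon_h^{\bm q}, \varepsilon_h^y, \varepsilon_h^{\widehat y}; \bm r_1, w_1, \mu_1) = \mathscr B_1(\bm q, y, y; \bm r_1, w_1, \mu_1) \\
{}- \mathscr B_1(\delta^{\bm q}, \delta^y, \delta^{\widehat y}; \bm r_1, w_1, \mu_1) - \mathscr B_1(\bm q_h(u), y_h(u), \widehat y_h(u); \bm r_1, w_1, \mu_1).
\end{multline*}
The last term is given by the auxiliary equation \eqref{HDG_u_a}, so the proof reduces to two identifications: first, $\mathscr B_1(\bm q, y, y; \bm r_1, w_1, \mu_1)$ reproduces exactly the same right-hand side as \eqref{HDG_u_a}; and second, $-\mathscr B_1(\delta^{\bm q}, \delta^y, \delta^{\widehat y}; \bm r_1, w_1, \mu_1)$ equals the right-hand side of \eqref{error_equation_L2k1}.

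For the first identification, I would substitute $(\bm q, y, y)$ into the definition \eqref{def_B1}, integrate by parts in $(y, \nabla\cdot\bm r_1)_{\mathcal T_h}$ and $(\bm q + \bm\beta y, \nabla w_1)_{\mathcal T_h}$, and invoke $\bm q = -\nabla y$, $\nabla\cdot(\bm q + \bm\beta y) - (\nabla\cdot\bm\beta)\, y = f$, and $y|_\Gamma = u$. Continuity of $y$ and normal continuity of $\bm q$ across interior faces (provided by the regularity assumption), together with single-valuedness of $\mu_1\in M_h(o)$, kill the $\mu_1$-boundary contribution on $\partial\mathcal T_h\backslash\varepsilon_h^\partial$. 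On $\varepsilon_h^\partial$ I would replace $u$ by $P_M u$: this is immediate for $\langle u,\bm r_1\cdot\bm n\rangle_{\varepsilon_h^\partial}$ since $\bm r_1\cdot\bm n\in\mathcal P^k(e)$, and for $\langle(\bm\beta\cdot\bm n - h^{-1} - \tau_1)u, w_1\rangle_{\varepsilon_h^\partial}$ I invoke \textbf{(A2)} to rewrite the coefficient as $-(h^{-1}+\tau_2)$ and \textbf{(A1)} to see that $\tau_2$ is piecewise constant, whence $(\bm\beta\cdot\bm n - h^{-1} - \tau_1)w_1|_e\in\mathcal P^{k+1}(e)$ and $P_M$-orthogonality applies.

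For the second identification, I would expand $\mathscr B_1(\delta^{\bm q}, \delta^y, \delta^{\widehat y}; \bm r_1, w_1, \mu_1)$ directly from \eqref{def_B1}. Orthogonality of $\bm\Pi$, $\Pi$, and $P_M$ immediately annihilates the four ``pure projection'' terms $(\delta^{\bm q}, \bm r_1)_{\mathcal T_h}$, $(\delta^y, \nabla\cdot\bm r_1)_{\mathcal T_h}$, $(\delta^{\bm q}, \nabla w_1)_{\mathcal T_h}$, and $\langle\delta^{\widehat y}, \bm r_1\cdot\bm n\rangle_{\partial\mathcal T_h\backslash\varepsilon_h^\partial}$, since the test functions paired against them have the requisite (lower) polynomial degree. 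What survives are $-(\bm\beta\delta^y, \nabla w_1)_{\mathcal T_h} - (\nabla\cdot\bm\beta\,\delta^y, w_1)_{\mathcal T_h}$, a $w_1$-boundary contribution on $\partial\mathcal T_h$, and $-\langle\widehat{\bm\delta}_1, \mu_1\rangle_{\partial\mathcal T_h\backslash\varepsilon_h^\partial}$. On the interior set $\partial\mathcal T_h\backslash\varepsilon_h^\partial$ the $w_1$-boundary pieces combine, by the very definition of $\widehat{\bm\delta}_1$, into $\langle\widehat{\bm\delta}_1, w_1\rangle_{\partial\mathcal T_h\backslash\varepsilon_h^\partial}$.

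The main obstacle, as I anticipate it, is extending this cancellation across to $\varepsilon_h^\partial$: I need $\langle \delta^{\bm q}\cdot\bm n + (h^{-1}+\tau_1)\delta^y, w_1\rangle_{\varepsilon_h^\partial}$ to match $\langle\widehat{\bm\delta}_1, w_1\rangle_{\varepsilon_h^\partial}$, and the discrepancy is exactly $\langle(\bm\beta\cdot\bm n - h^{-1} - \tau_1)\delta^{\widehat y}, w_1\rangle_{\varepsilon_h^\partial}$, where $\delta^{\widehat y} = u - P_M u$ on $\varepsilon_h^\partial$. The same appeal to \textbf{(A2)} and \textbf{(A1)} as before rewrites the coefficient as $-(h^{-1}+\tau_2)$ and makes it piecewise constant on each face, so $(\bm\beta\cdot\bm n - h^{-1} - \tau_1)w_1|_e\in\mathcal P^{k+1}(e)$ and $P_M$-orthogonality of $\delta^{\widehat y}$ kills the discrepancy. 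Flipping the overall sign and collecting the surviving pieces then yields \eqref{error_equation_L2k1} verbatim.
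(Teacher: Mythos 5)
Your proposal is correct, and at its core it is the same argument as the paper's: linearity of $\mathscr B_1$, the orthogonality of $\bm\Pi$, $\Pi$, $P_M$, the elementwise identities satisfied by the exact pair $(\bm q,y)$ (via $\bm q=-\nabla y$, the PDE, normal continuity of $\bm q+\bm\beta y$, and $y=u$ on $\Gamma$), and subtraction of the auxiliary equation \eqref{HDG_u_a}; the paper simply substitutes $(\bm\Pi\bm q,\Pi y,P_My)$ into \eqref{def_B1} directly instead of splitting off $\mathscr B_1(\delta^{\bm q},\delta^y,\delta^{\widehat y};\cdot)$ as a separate block. The one substantive difference is your double appeal to \textbf{(A1)}--\textbf{(A2)}: once to replace $u$ by $P_Mu$ in $\langle(\bm\beta\cdot\bm n-h^{-1}-\tau_1)u,w_1\rangle_{\varepsilon_h^\partial}$ coming from $\mathscr B_1(\bm q,y,y;\cdot)$, and once to kill the leftover $\langle(\bm\beta\cdot\bm n-h^{-1}-\tau_1)\delta^{\widehat y},w_1\rangle_{\varepsilon_h^\partial}$ from the projection-error block. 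Both appeals are valid (the assumptions are in force throughout \Cref{sec:analysis}, and with $\tau_1=\tau_2+\bm\beta\cdot\bm n$ and $\tau_2$ facewise constant the coefficient times $w_1|_e$ does lie in $\mathcal P^{k+1}(e)$), but they are superfluous: since $\delta^{\widehat y}=u-P_Mu$ on $\varepsilon_h^\partial$, the two boundary discrepancies you dispose of are the same quantity entering your final combination with opposite signs, so they cancel identically whatever $\tau_1$ is. The paper's bookkeeping keeps that quantity inside $\widehat{\bm\delta}_1$, whose definition contains exactly the term $(\bm\beta\cdot\bm n-h^{-1}-\tau_1)\delta^{\widehat y}$, which is why \eqref{error_equation_L2k1} holds as an identity with no hypotheses on the stabilization functions; the paper reserves \textbf{(A1)}--\textbf{(A2)} for the later steps that genuinely need them. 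So your proof stands as written, but you could streamline it (and slightly strengthen the statement's independence from \textbf{(A1)}--\textbf{(A2)}) by not discarding those two boundary terms individually and instead letting them cancel against each other.
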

\begin{proof}
	By the definition of the operator $ \mathscr B_1 $ in \eqref{def_B1}, we have
	\begin{align*}
	\hspace{1em}&\hspace{-1em}  \mathscr B_1 (\bm \Pi {\bm q},\Pi { y}, P_M  y, \bm r_1, w_1, \mu_1)\\
	&= (\bm \Pi {\bm q}, \bm{r_1})_{{\mathcal{T}_h}}- (\Pi { y}, \nabla\cdot \bm{r_1})_{{\mathcal{T}_h}}+\langle P_M  y, \bm{r_1}\cdot \bm{n} \rangle_{\partial{{\mathcal{T}_h}}\backslash {\varepsilon_h^{\partial}}}\\
	&  \quad- (\bm \Pi {\bm q} + \bm{\beta} \Pi y, \nabla w_1)_{{\mathcal{T}_h}}
	- (\nabla\cdot\bm{\beta} \Pi y,  w_1)_{{\mathcal{T}_h}}\\
	&\quad+\langle \bm \Pi {\bm q}\cdot\bm n +(h^{-1} + \tau_1) \Pi { y}, w_1 \rangle_{\partial{{\mathcal{T}_h}}}
	+ (\bm{\beta}\cdot\bm n-h^{-1}-\tau_1) P_M  y, w_1 \rangle_{\partial{{\mathcal{T}_h}}\backslash \varepsilon_h^{\partial}}\\
	&\quad	-\langle  \bm \Pi \bm q\cdot\bm n+\bm \beta\cdot\bm n P_M y +(h^{-1}+\tau_1)(\Pi y - P_M y),\mu_1\rangle_{\partial\mathcal T_h\backslash\varepsilon^{\partial}_h}.
	\end{align*}
	By properties of the $ L^2 $ projections \eqref{L2_projection}, we have
	\begin{align*}
	\hspace{3em}&\hspace{-3em} \mathscr B_1 (\bm \Pi {\bm q},\Pi { y},P_M  y, \bm r_1, w_1, \mu_1) \\
	&= ( {\bm q}, \bm{r_1})_{{\mathcal{T}_h}}- ({ y}, \nabla\cdot \bm{r_1})_{{\mathcal{T}_h}}+\langle   y, \bm{r_1}\cdot \bm{n} \rangle_{\partial{{\mathcal{T}_h}}\backslash {\varepsilon_h^{\partial}}}\\
	& \quad - ( {\bm q} + \bm \beta y, \nabla w_1)_{{\mathcal{T}_h}} +  (  \bm \beta \delta^y, \nabla w_1)_{{\mathcal{T}_h}} - (\nabla\cdot\bm{\beta} y, w_1)_{\mathcal T_h} + (\nabla\cdot\bm{\beta} \delta^y, w_1)_{\mathcal T_h} \\
	&\quad+\langle {\bm q}\cdot\bm n, w_1 \rangle_{\partial{{\mathcal{T}_h}}} - \langle \delta^{\bm q}\cdot\bm n, w_1 \rangle_{\partial{{\mathcal{T}_h}}}+\langle (h^{-1}+\tau_1)\Pi y , w_1 \rangle_{\partial{{\mathcal{T}_h}}}  \\
	&\quad+\langle\bm\beta\cdot\bm n y, w_1\rangle_{\partial\mathcal T_h\backslash\varepsilon_h^\partial} - \langle\bm\beta\cdot\bm n \delta^{\widehat y}, w_1\rangle_{\partial\mathcal T_h\backslash\varepsilon_h^\partial}
	- \langle (h^{-1}+\tau_1) P_M  y, w_1 \rangle_{\partial{{\mathcal{T}_h}}\backslash \varepsilon_h^{\partial}} \\
	&\quad- \langle {\bm q}\cdot\bm n, \mu_1 \rangle_{\partial{{\mathcal{T}_h}}\backslash\varepsilon_h^{\partial}} + \langle \delta^{\bm q}\cdot\bm n, \mu_1 \rangle_{\partial{{\mathcal{T}_h}}\backslash\varepsilon_h^{\partial}} - \langle {\bm \beta}\cdot\bm n y, \mu_1 \rangle_{\partial{{\mathcal{T}_h}}\backslash\varepsilon_h^{\partial}}\\
	&\quad+\langle {\bm \beta}\cdot\bm n \delta^{\widehat y}, \mu_1 \rangle_{\partial{{\mathcal{T}_h}}\backslash\varepsilon_h^{\partial}}+\langle (h^{-1}+\tau_1) (\delta^y-\delta^{\widehat y}), \mu_1 \rangle_{\partial{{\mathcal{T}_h}}\backslash\varepsilon_h^{\partial}}.
	\end{align*}
	Note that the exact state $ y $ and exact flux $\bm{q}$ satisfy
	\begin{align*}
	(\bm{q},\bm{r}_1)_{\mathcal{T}_h}-(y,\nabla\cdot \bm{r}_1)_{\mathcal{T}_h}+\left\langle{y},\bm r_1\cdot \bm n \right\rangle_{\partial {\mathcal{T}_h}} &= 0,\\
	-(\bm{q}+\bm{\beta} y,\nabla w_1)_{\mathcal{T}_h}-(\nabla \cdot \bm{\beta} y, w_1)_{\mathcal{T}_h}+\left\langle ({\bm{q}}+\bm \beta y)\cdot \bm{n},w_1\right\rangle_{\partial {\mathcal{T}_h}} &= (f,w_1)_{\mathcal{T}_h},\\
	\left\langle ({\bm{q}}+\bm{\beta} y)\cdot \bm{n},\mu_1\right\rangle_{\partial {\mathcal{T}_h}\backslash \varepsilon_h^{\partial}}&=0,
	\end{align*}
	for all $(\bm{r}_1,w_1,\mu_1)\in\bm{V}_h\times W_h\times M_h(o)$. Then we have
	\begin{align*}
	\hspace{1em}&\hspace{-1em}  \mathscr B_1 (\bm \Pi {\bm q},\Pi { y}, P_M  y, \bm r_1, w_1, \mu_1)\\
	&=-\left\langle u,\bm r_1\cdot \bm n \right\rangle_{\varepsilon_h^{\partial}} - \left\langle \bm{\beta}\cdot \bm n u,w_1\right\rangle_{\varepsilon_h^{\partial}} + (f,w_1)_{\mathcal T_h} +  ( \bm \beta \delta^y, \nabla w_1)_{{\mathcal{T}_h}} \\
	&\quad+(\nabla\cdot\bm{\beta}\delta^y,w_1)_{\mathcal T_h} - \langle \delta^{\bm q}\cdot\bm n, w_1 \rangle_{\partial{{\mathcal{T}_h}}}+\langle (h^{-1} +\tau_1) \Pi y, w_1 \rangle_{\partial{{\mathcal{T}_h}}} \\
	&\quad- \langle {\bm \beta}\cdot\bm n \delta^{\widehat y}, w_1 \rangle_{\partial{{\mathcal{T}_h}}\backslash\varepsilon_h^{\partial}}  - \langle (h^{-1}+\tau_1) P_M  y, w_1 \rangle_{\partial{{\mathcal{T}_h}}\backslash \varepsilon_h^{\partial}} + \langle \delta^{\bm q}\cdot\bm n, \mu_1 \rangle_{\partial{{\mathcal{T}_h}}\backslash\varepsilon_h^{\partial}}\\
	&\quad+\langle {\bm \beta}\cdot\bm n \delta^{\widehat y}, \mu_1 \rangle_{\partial{{\mathcal{T}_h}}\backslash\varepsilon_h^{\partial}}+ \langle (h^{-1}+\tau_1) (\delta^y-\delta^{\widehat y}), \mu_1 \rangle_{\partial{{\mathcal{T}_h}}\backslash\varepsilon_h^{\partial}}.
	\end{align*}
	Subtract part 1 of the auxiliary problem \eqref{HDG_u_a} from the above equality to obtain the result:
	\begin{align*}
	\hspace{1em}&\hspace{-1em} \mathscr B_1 (\varepsilon_h^{\bm q},\varepsilon_h^{ y}, \varepsilon_h^{\widehat y},\bm r_1, w_1, \mu_1) \\
	& =  ( \bm \beta \delta^y, \nabla w_1)_{{\mathcal{T}_h}}
	+(\nabla\cdot\bm{\beta}\delta^y,w_1)_{\mathcal T_h} - \langle \delta^{\bm q}\cdot\bm n, w_1 \rangle_{\partial{{\mathcal{T}_h}}} \\
	&  \quad+\langle (h^{-1}+\tau_1) \Pi y, w_1 \rangle_{\partial{{\mathcal{T}_h}}}- \langle {\bm \beta}\cdot\bm n \delta^{\widehat y}, w_1 \rangle_{\partial{{\mathcal{T}_h}}}  - \langle (h^{-1}+\tau_1) P_M  y, w_1 \rangle_{\partial{{\mathcal{T}_h}}} \\
	&\quad+ \langle \delta^{\bm q}\cdot\bm n, \mu_1 \rangle_{\partial{{\mathcal{T}_h}}\backslash\varepsilon_h^{\partial}}+\langle {\bm \beta}\cdot\bm n \delta^{\widehat y}, \mu_1 \rangle_{\partial{{\mathcal{T}_h}}\backslash\varepsilon_h^{\partial}}+ \langle (h^{-1}+\tau_1) (\delta^y-\delta^{\widehat y}), \mu_1 \rangle_{\partial{{\mathcal{T}_h}}\backslash\varepsilon_h^{\partial}}\\
	&= ( \bm \beta \delta^y, \nabla w_1)_{{\mathcal{T}_h}}
	+(\nabla\cdot\bm{\beta}\delta^y,w_1)_{\mathcal T_h} - \langle \widehat{\bm \delta}_1, w_1 \rangle_{\partial{{\mathcal{T}_h}}} + \langle \widehat{\bm \delta}_1, \mu_1 \rangle_{\partial{{\mathcal{T}_h}}\backslash \varepsilon_h^{\partial}}.
	\end{align*}
\end{proof}

\subsubsection{Step 2: Estimate for $\varepsilon_h^{\boldmath q}$.}
We begin with a key inequality that has been proved for an existing HDG method in \cite{MR3440284}.
\begin{lemma}\label{nabla_ine}
	We have
	\begin{equation*}
	\|\nabla\varepsilon^y_h\|_{\mathcal T_h}
	\le \|\varepsilon^{\bm q}_h\|_{\mathcal T_h}+Ch^{-\frac1 2}\|\varepsilon^y_h-\varepsilon^{\widehat y}_h\|_{\partial\mathcal T_h}.
	\end{equation*}
\end{lemma}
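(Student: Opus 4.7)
The strategy is to extract the "flux equation" (\emph{i.e.}, the part of $\mathscr B_1$ with test function $\bm r_1\in\bm V_h$, $w_1=0$, $\mu_1=0$) from the error equation in \Cref{lemma:step1_first_lemma}, and then test it with the natural choice $\bm r_1 = \nabla\varepsilon_h^y$. Since $\varepsilon_h^y\in W_h$ consists of piecewise polynomials of degree $k+1$, we have $\nabla\varepsilon_h^y\in\bm V_h$, so this is an admissible test function. Note that all terms on the right-hand side of the error equation in \Cref{lemma:step1_first_lemma} involve either $w_1$ or $\mu_1$, so with the choice $(w_1,\mu_1)=(0,0)$ the right-hand side vanishes.

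Unpacking the definition of $\mathscr B_1$ for this choice of test functions yields
\[
0 = \mathscr B_1(\varepsilon_h^{\bm q},\varepsilon_h^y,\varepsilon_h^{\widehat y};\nabla\varepsilon_h^y,0,0)
= (\varepsilon_h^{\bm q},\nabla\varepsilon_h^y)_{\mathcal T_h} - (\varepsilon_h^y,\nabla\cdot\nabla\varepsilon_h^y)_{\mathcal T_h} + \langle \varepsilon_h^{\widehat y}, \nabla\varepsilon_h^y\cdot\bm n\rangle_{\partial\mathcal T_h\setminus\varepsilon_h^\partial}.
\]
I would then integrate by parts on the second term, producing $\|\nabla\varepsilon_h^y\|^2_{\mathcal T_h}$ together with a boundary term $-\langle \varepsilon_h^y, \nabla\varepsilon_h^y\cdot\bm n\rangle_{\partial\mathcal T_h}$. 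Using that $\varepsilon_h^{\widehat y}=0$ on $\varepsilon_h^\partial$ (noted explicitly just after \eqref{notation_1}), the remaining two boundary terms combine into $\langle \varepsilon_h^y-\varepsilon_h^{\widehat y},\nabla\varepsilon_h^y\cdot\bm n\rangle_{\partial\mathcal T_h}$, leaving the clean identity
\[
\|\nabla\varepsilon_h^y\|_{\mathcal T_h}^2 = -(\varepsilon_h^{\bm q},\nabla\varepsilon_h^y)_{\mathcal T_h} + \langle \varepsilon_h^y-\varepsilon_h^{\widehat y}, \nabla\varepsilon_h^y\cdot\bm n\rangle_{\partial\mathcal T_h}.
\]

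To finish, I would apply Cauchy--Schwarz to the volume term, and to the boundary term I would combine Cauchy--Schwarz with the standard discrete trace inverse inequality $\|\nabla\varepsilon_h^y\|_{\partial\mathcal T_h}\le C h^{-1/2}\|\nabla\varepsilon_h^y\|_{\mathcal T_h}$ (valid since $\nabla\varepsilon_h^y$ is piecewise polynomial of bounded degree). This gives
\[
\|\nabla\varepsilon_h^y\|_{\mathcal T_h}^2 \le \bigl(\|\varepsilon_h^{\bm q}\|_{\mathcal T_h} + Ch^{-1/2}\|\varepsilon_h^y-\varepsilon_h^{\widehat y}\|_{\partial\mathcal T_h}\bigr)\|\nabla\varepsilon_h^y\|_{\mathcal T_h},
\]
and dividing through by $\|\nabla\varepsilon_h^y\|_{\mathcal T_h}$ yields the stated bound. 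The argument is essentially a clean integration-by-parts identity plus a trace inverse inequality; there is no serious obstacle, and the only point to be careful about is keeping track of the skeleton set $\varepsilon_h^\partial$ versus $\partial\mathcal T_h\setminus\varepsilon_h^\partial$ and invoking $\varepsilon_h^{\widehat y}|_{\varepsilon_h^\partial}=0$ to consolidate the two boundary integrals.
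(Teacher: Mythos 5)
Your argument is correct, and it is worth noting that the paper itself does not prove \Cref{nabla_ine}: it cites \cite{MR3440284} and merely remarks that raising the polynomial degree of $M_h$ from $k$ to $k+1$ does not alter the proof given there. Your proposal supplies that omitted proof in a self-contained way, and it is essentially the standard argument: the flux block of the error equation of \Cref{lemma:step1_first_lemma} (take $w_1=0$, $\mu_1=0$, so the right-hand side vanishes) tested with $\bm r_1=\nabla\varepsilon_h^y$, which lies in $\bm V_h$ precisely because $W_h$ uses degree $k+1$ while $\bm V_h$ uses degree $k$; then elementwise integration by parts, the fact that $\varepsilon_h^{\widehat y}=0$ on $\varepsilon_h^\partial$, Cauchy--Schwarz, and the discrete trace inequality $\|\nabla\varepsilon_h^y\|_{\partial\mathcal T_h}\lesssim h^{-1/2}\|\nabla\varepsilon_h^y\|_{\mathcal T_h}$ (the analogue of the last inequality in \eqref{classical_ine}, valid for any piecewise polynomial of fixed degree on a shape-regular mesh, not only for elements of $W_h$). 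Your computation also makes transparent why the paper's remark about the degree of $M_h$ is harmless: the trace space enters only through the single term $\langle\varepsilon_h^{\widehat y},\bm r_1\cdot\bm n\rangle_{\partial\mathcal T_h\backslash\varepsilon_h^\partial}$, which is handled by Cauchy--Schwarz irrespective of that degree. Two cosmetic points: the constant $1$ in front of $\|\varepsilon_h^{\bm q}\|_{\mathcal T_h}$ in the statement does come out of your Cauchy--Schwarz step as required, and the final division by $\|\nabla\varepsilon_h^y\|_{\mathcal T_h}$ is legitimate since the asserted inequality is trivial when that norm vanishes.
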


In the HDG method in \cite{MR3440284}, degree $ k $ polynomials are used for the space $ M_h $ instead of degree $ k+1 $ here.  Increasing this degree does not lead to any change in the proof of the above lemma; therefore, we omit the proof.


\begin{lemma}\label{lemma:step2_main_lemma}
	We have
	\begin{align}
	\norm{\varepsilon_h^{\bm{q}}}_{\mathcal{T}_h}^2+h^{-1}\|{\varepsilon_h^y-\varepsilon_h^{\widehat{y}}}\|_{\partial \mathcal T_h}^2 \lesssim h^{2s_{\bm q}}\norm{\bm q}_{s^{\bm q},\Omega}^2 + h^{2s_{y}-2}\norm{y}_{s^{y},\Omega}^2.
	\end{align}
\end{lemma}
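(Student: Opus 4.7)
The plan is to test the error equation from \Cref{lemma:step1_first_lemma} with the discrete error itself, namely $(\bm r_1, w_1, \mu_1) = (\varepsilon_h^{\bm q}, \varepsilon_h^y, \varepsilon_h^{\widehat y})$, and invoke the identity in \Cref{property_B} to convert the left-hand side into a positive quadratic form. Since $\varepsilon_h^{\widehat y} = 0$ on $\varepsilon_h^{\partial}$, the boundary contribution from $\varepsilon_h^{\partial}$ naturally merges into the interior jump seminorm, and using $\nabla\cdot\bm\beta \le 0$ together with the positivity \eqref{eqn:tau1_condition} of $\tau_1 - \tfrac12 \bm\beta\cdot\bm n$ from assumption \textbf{(A3)}, the left-hand side satisfies
\[
\mathscr B_1(\varepsilon_h^{\bm q},\varepsilon_h^y,\varepsilon_h^{\widehat y};\varepsilon_h^{\bm q},\varepsilon_h^y,\varepsilon_h^{\widehat y}) \gtrsim \|\varepsilon_h^{\bm q}\|^2_{\mathcal T_h} + h^{-1}\|\varepsilon_h^y - \varepsilon_h^{\widehat y}\|^2_{\partial \mathcal T_h}.
\]

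On the right-hand side three contributions must be estimated: the convective volume term $(\bm\beta\, \delta^y, \nabla \varepsilon_h^y)_{\mathcal T_h}$, the reaction term $(\nabla\cdot\bm\beta\, \delta^y, \varepsilon_h^y)_{\mathcal T_h}$, and the boundary term, which — again using $\varepsilon_h^{\widehat y}=0$ on $\varepsilon_h^{\partial}$ — collapses to $-\langle \widehat{\bm\delta}_1, \varepsilon_h^y - \varepsilon_h^{\widehat y}\rangle_{\partial \mathcal T_h}$. For the convective term, Cauchy--Schwarz combined with \Cref{nabla_ine} yields a product $\|\delta^y\|_{\mathcal T_h}(\|\varepsilon_h^{\bm q}\|_{\mathcal T_h} + h^{-1/2}\|\varepsilon_h^y - \varepsilon_h^{\widehat y}\|_{\partial\mathcal T_h})$ that Young's inequality absorbs into the coercive left-hand side at the cost of a $h^{2s_y}\|y\|_{s_y,\Omega}^2$ remainder. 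The boundary term is handled by the weighted Cauchy--Schwarz bound $\|h^{1/2}\widehat{\bm\delta}_1\|_{\partial\mathcal T_h} \cdot h^{-1/2}\|\varepsilon_h^y - \varepsilon_h^{\widehat y}\|_{\partial \mathcal T_h}$; unpacking the definition of $\widehat{\bm\delta}_1$ and invoking the trace-type projection estimates \eqref{classical_ine} gives $\|h^{1/2}\widehat{\bm\delta}_1\|^2_{\partial\mathcal T_h} \lesssim h^{2 s_{\bm q}}\|\bm q\|^2_{s_{\bm q},\Omega} + h^{2s_y-2}\|y\|^2_{s_y,\Omega}$ — the $h^{-1}$ in $(h^{-1}+\tau_1)(\delta^y-\delta^{\widehat y})$ is precisely balanced by the $h^{1/2}$ weighting and the $h^{s_y-1/2}$ projection decay.

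The main obstacle is the reaction term $(\nabla\cdot\bm\beta\, \delta^y, \varepsilon_h^y)_{\mathcal T_h}$, because a direct Cauchy--Schwarz estimate brings in $\|\varepsilon_h^y\|_{\mathcal T_h}$, a quantity not a priori controlled by the coercive norms on the left-hand side. The remedy is a discrete Poincaré-type inequality of the form $\|\varepsilon_h^y\|_{\mathcal T_h} \lesssim \|\nabla \varepsilon_h^y\|_{\mathcal T_h} + h^{-1/2}\|\varepsilon_h^y - \varepsilon_h^{\widehat y}\|_{\partial \mathcal T_h}$, which is legitimate here precisely because $\varepsilon_h^{\widehat y}$ vanishes on $\varepsilon_h^{\partial}$, so the jump seminorm $\|\varepsilon_h^y-\varepsilon_h^{\widehat y}\|_{\partial\mathcal T_h}$ also encodes the boundary trace of $\varepsilon_h^y$. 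A second application of \Cref{nabla_ine} then reduces $\|\nabla\varepsilon_h^y\|_{\mathcal T_h}$ to the coercive quantities on the left-hand side, and a final Young's inequality with sufficiently small $\epsilon$ absorbs all the resulting terms, leaving the announced estimate $\|\varepsilon_h^{\bm q}\|^2_{\mathcal T_h} + h^{-1}\|\varepsilon_h^y - \varepsilon_h^{\widehat y}\|^2_{\partial\mathcal T_h} \lesssim h^{2s_{\bm q}}\|\bm q\|^2_{s_{\bm q},\Omega} + h^{2s_y - 2}\|y\|^2_{s_y,\Omega}$, where the superfluous $h^{2s_y}$ remainder from the convective term is dominated by $h^{2s_y-2}$ for $h$ small.
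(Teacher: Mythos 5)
Your proposal is correct and follows the paper's overall strategy: test the Step-1 error equation with the discrete error, use the coercivity identity of \Cref{property_B} (with $\varepsilon_h^{\widehat y}=0$ on $\varepsilon_h^\partial$ merging the boundary and interior jump terms), and estimate the convective term via \Cref{nabla_ine} plus Young and the boundary term via the weighted bound $h^{1/2}\|\widehat{\bm\delta}_1\|_{\partial\mathcal T_h}\cdot h^{-1/2}\|\varepsilon_h^y-\varepsilon_h^{\widehat y}\|_{\partial\mathcal T_h}$, exactly as in the paper. The one place you diverge is the reaction term $(\nabla\cdot\bm\beta\,\delta^y,\varepsilon_h^y)_{\mathcal T_h}$: the paper does not need a Poincar\'e argument there, because the coercivity identity retains the term $\tfrac12\|(-\nabla\cdot\bm\beta)^{1/2}\varepsilon_h^y\|_{\mathcal T_h}^2$ on the left, and a weighted Cauchy--Schwarz (using $\nabla\cdot\bm\beta\in L^\infty$, $\nabla\cdot\bm\beta\le 0$) bounds the reaction term by $C\|\delta^y\|_{\mathcal T_h}^2+\tfrac12\|(-\nabla\cdot\bm\beta)^{1/2}\varepsilon_h^y\|_{\mathcal T_h}^2$, which is absorbed directly. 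So your assertion that $\|\varepsilon_h^y\|_{\mathcal T_h}$ is ``not controlled by the left-hand side'' overlooks this weighted control, which is precisely the paper's device; your alternative --- the discrete Poincar\'e inequality (stated in the paper as \Cref{lemma:discr_Poincare_ineq} for the $z$-variables, but valid for $(\varepsilon_h^y,\varepsilon_h^{\widehat y})$ since $\varepsilon_h^{\widehat y}$ vanishes on $\varepsilon_h^\partial$) combined with a second use of \Cref{nabla_ine} and a small-$\epsilon$ Young absorption --- is nevertheless valid and yields the same bound. The trade-off is that the paper's route is shorter and self-contained within the coercivity identity, while yours imports an extra lemma but avoids manipulating the $(-\nabla\cdot\bm\beta)^{1/2}$-weighted norm; both give the stated estimate, with your extra $h^{2s_y}$ remainder harmlessly dominated by $h^{2s_y-2}$.
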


\begin{proof}
	First, since $\varepsilon_h^{\widehat y}=0$ on $\varepsilon_h^\partial$, the basic property of $ \mathscr B_1 $ in \Cref{property_B} gives
	\begin{align*}
	\hspace{1em}&\hspace{-1em} \mathscr B(\varepsilon_h^{\bm q},\varepsilon_h^{ y}, \varepsilon_h^{\widehat y}, \varepsilon_h^{\bm q},\varepsilon_h^{ y}, \varepsilon_h^{\widehat y})\\ &=(\varepsilon_h^{\bm{q}},\varepsilon_h^{\bm{q}})_{\mathcal{T}_h}+ \|(h^{-1}+\tau_1-\frac 12 \bm{\beta} \cdot\bm n)^{\frac 12 } (\varepsilon_h^y-\varepsilon_h^{\widehat{y}})\|_{\partial\mathcal T_h}^2 +\frac 1 2\| (-\nabla\cdot\bm{\beta})^{\frac  1 2}\varepsilon_h^y\|_{\mathcal T_h}^2.
	\end{align*}
	Then, taking $(\bm r_1, w_1,\mu_1) = (\bm \varepsilon_h^{\bm q},\varepsilon_h^y,\varepsilon_h^{\widehat y})$ in \eqref{error_equation_L2k1} in \Cref{lemma:step1_first_lemma} gives
	\begin{equation}\label{step2_2}
	\begin{split}
	(\varepsilon_h^{\bm{q}},\varepsilon_h^{\bm{q}})_{\mathcal{T}_h}&+ \|(h^{-1}+\tau_1-\frac 12 \bm{\beta} \cdot\bm n)^{\frac 12 } (\varepsilon_h^y-\varepsilon_h^{\widehat{y}})\|_{\partial\mathcal T_h}^2 +\frac 1 2\| (-\nabla\cdot\bm{\beta})^{\frac  1 2}\varepsilon_h^y\|_{\mathcal T_h}^2\\
	&= ( \bm \beta \delta^y, \nabla \varepsilon_h^y)_{{\mathcal{T}_h}}
	+(\nabla\cdot\bm{\beta}\delta^y,\varepsilon_h^y)_{\mathcal T_h} -\langle \widehat {\bm\delta}_1,\varepsilon_h^y - \varepsilon_h^{\widehat y}\rangle_{\partial\mathcal T_h} \\
	&=: T_1 + T_2 + T_3.
	\end{split}
	\end{equation}
	For the terms $T_1$ and $T_2$, simply applying \Cref{nabla_ine} and Young's inequality gives
	\begin{align*}
	T_1 &=  ( \bm \beta \delta^y, \nabla \varepsilon_h^y)_{{\mathcal{T}_h}} \le C \| \delta^y\|_{\mathcal T_h}^2 + \frac 1 4
	\|\varepsilon_h^{\bm{q}}\|_{\mathcal T_h}^2 + \frac 1 {4h} \|{\varepsilon_h^y-\varepsilon_h^{\widehat{y}}}\|_{\partial \mathcal T_h}^2,\\
	T_2 &= (\nabla\cdot\bm{\beta}\delta^y,\varepsilon_h^y)_{\mathcal T_h} \le C\|\delta^y\|_{\mathcal T_h}^2 + \frac 1 2 \|(-\nabla\cdot\bm{\beta})^{\frac 1 2} \varepsilon_h^y\|_{\mathcal T_h}^2,\\
	T_3 &= - \langle \widehat {\bm\delta}_1,\varepsilon_h^y - \varepsilon_h^{\widehat y}\rangle_{\partial\mathcal T_h} \le 4h \|\bm{\delta}_1\|_{\partial\mathcal T_h}^2 + \frac 1 {4h} \|{\varepsilon_h^y-\varepsilon_h^{\widehat{y}}}\|_{\partial \mathcal T_h}^2.
	\end{align*}
	Sum all the estimates for $\{T_i\}_{i=1}^3$ to obtain
	\begin{align*}
	\|\varepsilon_h^{\bm{q}}\|_{\mathcal{T}_h}^2+h^{-1}\|{\varepsilon_h^y-\varepsilon_h^{\widehat{y}}}\|_{\partial \mathcal T_h}^2 & \lesssim h \|\bm{\delta}_1\|_{\partial\mathcal T_h}^2  +
	\norm{\delta^{y}}_{\mathcal T_h}^2 \\
	&\lesssim h^{2s_{\bm q}}\norm{\bm q}_{s^{\bm q},\Omega}^2 + h^{2s_{y}-2}\norm{y}_{s^{y},\Omega}^2.
	\end{align*}
\end{proof}

\subsubsection{Step 3: Estimate for $\varepsilon_h^{y}$ by a duality argument.}
\label{subsec:proof_step3}

Next, we introduce the dual problem for any given $\Theta$ in $L^2(\Omega):$
\begin{equation}\label{Dual_PDE}
\begin{split}
\bm\Phi-\nabla\Psi &= 0\qquad \ \ ~\text{in}\ \  \Omega,\\
\nabla\cdot\bm{\Phi}+\nabla\cdot(\bm\beta\Psi) &= \Theta \qquad \ \text{in}\  \ \Omega,\\
\Psi &= 0\qquad \ \ ~\text{on}\ \partial\Omega.
\end{split}
\end{equation}
Since the domain $\Omega$ is convex, we have the regularity estimate
\begin{align}
\norm{\bm \Phi}_{1,\Omega} + \norm{\Psi}_{2,\Omega} \le C_{\text{reg}} \norm{\Theta}_\Omega.
\end{align}

Before we estimate  $\varepsilon_h^y$,  we introduce the following notation, which is similar to the earlier notation in \eqref{notation_1}:
\begin{align}
\delta^{\bm \Phi} &=\bm \Phi-{\bm\Pi} \bm \Phi, \quad \delta^\Psi=\Psi- {\Pi} \Psi, \quad
\delta^{\widehat \Psi} = \Psi-P_M\Psi.
\end{align}
\begin{lemma}\label{e_sec}
	We have
	\begin{align*}
	\|\varepsilon_h^y\|_{\mathcal T_h} \lesssim  h^{s_{\bm q}+1}\norm{\bm q}_{s^{\bm q},\Omega} + h^{s_{y}}\norm{y}_{s^{y},\Omega}.
	\end{align*}
\end{lemma}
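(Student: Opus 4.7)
The strategy is a standard Aubin--Nitsche duality argument adapted to the HDG setting. I set the source $\Theta = \varepsilon_h^y$ in the dual problem \eqref{Dual_PDE}, obtaining a pair $(\bm\Phi,\Psi)\in [H^1(\Omega)]^d\times (H^2(\Omega)\cap H^1_0(\Omega))$ with $\|\bm\Phi\|_{1,\Omega}+\|\Psi\|_{2,\Omega}\lesssim \|\varepsilon_h^y\|_{\mathcal T_h}$. The goal is to produce a bound of the form $\|\varepsilon_h^y\|_{\mathcal T_h}^2 \lesssim \bigl(h^{s_{\bm q}+1}\|\bm q\|_{s_{\bm q},\Omega}+h^{s_y}\|y\|_{s_y,\Omega}\bigr)\|\varepsilon_h^y\|_{\mathcal T_h}$, after which dividing by $\|\varepsilon_h^y\|_{\mathcal T_h}$ gives the claim.

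The first step is to write $\|\varepsilon_h^y\|_{\mathcal T_h}^2=(\varepsilon_h^y,\nabla\cdot\bm\Phi+\nabla\cdot(\bm\beta\Psi))_{\mathcal T_h}$ and perform element-wise integration by parts, using the smoothness of $(\bm\Phi,\Psi)$, the single-valuedness of $\bm\Phi\cdot\bm n$ and $\Psi$ across interior faces, and the fact that $\Psi=0$ on $\partial\Omega$. This represents $\|\varepsilon_h^y\|_{\mathcal T_h}^2$ in terms of $\varepsilon_h^{\bm q},\varepsilon_h^y,\varepsilon_h^{\widehat y}$ paired against the exact dual variables $\bm\Phi,\Psi$. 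Next I insert the projections $\bm\Pi\bm\Phi,\Pi\Psi,P_M\Psi$ by adding and subtracting, and regroup so that the expression splits into a principal part, which I will identify with $-\mathscr B_1(\varepsilon_h^{\bm q},\varepsilon_h^y,\varepsilon_h^{\widehat y};\bm\Pi\bm\Phi,\Pi\Psi,P_M\Psi)$ (up to a sign convention mirroring \Cref{identical_equa}, which is where assumption \textbf{(A2)} enters to make the stabilization terms cancel), plus a remainder involving only the dual projection errors $\delta^{\bm\Phi},\delta^\Psi,\delta^{\widehat\Psi}$ tested against $\varepsilon_h^{\bm q}$, $\varepsilon_h^y$, and $\varepsilon_h^y-\varepsilon_h^{\widehat y}$.

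For the principal $\mathscr B_1$ term I invoke the error equation \eqref{error_equation_L2k1} with test functions $(\bm r_1,w_1,\mu_1)=(\bm\Pi\bm\Phi,\Pi\Psi,P_M\Psi)$, which converts it to a sum of terms involving the primal projection errors $\delta^y$ and $\widehat{\bm\delta}_1$ paired against $\Pi\Psi, P_M\Psi$, and $\nabla\Pi\Psi$. Every remaining term is then estimated by Cauchy--Schwarz together with the classical projection bounds \eqref{classical_ine} applied both to $(\bm q,y)$ and to $(\bm\Phi,\Psi)$, the Step~2 bound $\|\varepsilon_h^{\bm q}\|_{\mathcal T_h}+h^{-1/2}\|\varepsilon_h^y-\varepsilon_h^{\widehat y}\|_{\partial\mathcal T_h}\lesssim h^{s_{\bm q}}\|\bm q\|_{s_{\bm q},\Omega}+h^{s_y-1}\|y\|_{s_y,\Omega}$ from \Cref{lemma:step2_main_lemma}, and the dual regularity estimate. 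The crucial gain of one extra power of $h$ on the $\bm q$ side comes from pairing $\delta^{\bm q}\cdot\bm n$ (or $\widehat{\bm\delta}_1$) against $\delta^{\widehat\Psi}$ on $\partial\mathcal T_h$, where both projection errors contribute their full approximation orders; similarly, the convection terms $(\bm\beta\delta^y,\nabla\Pi\Psi)_{\mathcal T_h}$ and $(\nabla\cdot\bm\beta\,\delta^y,\Pi\Psi)_{\mathcal T_h}$ are bounded by $\|\delta^y\|_{\mathcal T_h}\|\Psi\|_{1,\Omega}\lesssim h^{s_y}\|y\|_{s_y,\Omega}\|\varepsilon_h^y\|_{\mathcal T_h}$.

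The main obstacle is the bookkeeping in the second step: correctly identifying the discrete ``adjointness'' identity that equates the integrated-by-parts form of $(\varepsilon_h^y,\Theta)_{\mathcal T_h}$ with $-\mathscr B_1(\varepsilon_h^{\bm q},\varepsilon_h^y,\varepsilon_h^{\widehat y};\bm\Pi\bm\Phi,\Pi\Psi,P_M\Psi)$ modulo projection-error remainders. This requires careful sign tracking in the stabilization terms $(h^{-1}+\tau_1)(\varepsilon_h^y-\varepsilon_h^{\widehat y})$ versus their counterparts arising from the dual problem (which naturally match $\tau_2$), together with the boundary treatment on $\varepsilon_h^\partial$ where $\varepsilon_h^{\widehat y}=0$ and also $\Psi=0$. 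Assumption \textbf{(A2)}, $\tau_1=\tau_2+\bm\beta\cdot\bm n$, is precisely what makes these terms line up, exactly as in \Cref{identical_equa}. Once this identity is in place, the remaining estimates are routine applications of Cauchy--Schwarz and the projection error bounds.
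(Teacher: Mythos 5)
Your proposal is correct and follows essentially the same route as the paper: set up the dual problem \eqref{Dual_PDE} with $\Theta=\pm\varepsilon_h^y$, test the error equation \eqref{error_equation_L2k1} with $(\bm\Pi\bm\Phi,\Pi\Psi,P_M\Psi)$, compare with the direct expansion of $\mathscr B_1(\varepsilon_h^{\bm q},\varepsilon_h^y,\varepsilon_h^{\widehat y};\bm\Pi\bm\Phi,\Pi\Psi,P_M\Psi)$ obtained by projection orthogonality and element-wise integration by parts, and bound the resulting remainders $R_1,\dots,R_5$ via \eqref{classical_ine}, \Cref{nabla_ine}, \Cref{lemma:step2_main_lemma}, and the $H^2$ dual regularity. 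The one inaccuracy is your appeal to \textbf{(A2)} and \Cref{identical_equa}: no such cancellation occurs or is needed in this step, since only $\mathscr B_1$ and $\tau_1$ appear; the $(h^{-1}+\tau_1)(\varepsilon_h^y-\varepsilon_h^{\widehat y})$ stabilization terms simply survive, paired against $\delta^\Psi-\delta^{\widehat\Psi}$, and are absorbed into the remainder ($R_5$ in the paper) using the Step~2 bound and the $O(h^{3/2})$ face estimates for the dual projection errors.
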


\begin{proof}
	Consider the dual problem \eqref{Dual_PDE} and let $\Theta =- \varepsilon_h^y$.  Take  $(\bm r_1,w_1,\mu_1) = ( {\bm\Pi}\bm{\Phi},{\Pi}\Psi,P_M\Psi)$ in \eqref{error_equation_L2k1} in \Cref{lemma:step1_first_lemma},   and since $\Psi = 0$ on $\varepsilon_h^{\partial}$, we have
	\begin{align*}
	\hspace{1em}&\hspace{-1em}  \mathscr B_1 (\varepsilon^{\bm q}_h,\varepsilon^y_h,\varepsilon^{\widehat y}_h;{\bm\Pi}\bm{\Phi},{\Pi}\Psi,P_M\Psi)\\
	&= (\varepsilon^{\bm q}_h,{\bm\Pi}\bm{\Phi})_{\mathcal T_h}-( \varepsilon^y_h,\nabla\cdot{\bm\Pi}\bm{\Phi})_{\mathcal T_h}+\langle  \varepsilon^{\widehat y}_h,{\bm\Pi}\bm{\Phi}\cdot\bm n\rangle_{\partial\mathcal T_h\backslash \varepsilon_h^\partial}\\
	&\quad-(\varepsilon^{\bm q}_h+\bm \beta\varepsilon^y_h,  \nabla {\Pi}\Psi)_{\mathcal T_h}-(\nabla\cdot\bm\beta \varepsilon^y_h,{\Pi}\Psi)_{\mathcal T_h}+\langle \varepsilon^{\bm q}_h\cdot\bm n +(h^{-1}+\tau_1) \varepsilon^y_h ,{\Pi}\Psi\rangle_{\partial\mathcal T_h}\\
	&\quad+\langle (\bm\beta\cdot\bm n -h^{-1}-\tau_1) \varepsilon^{\widehat y}_h,{\Pi}\Psi\rangle_{\partial\mathcal T_h}\\
	&\quad-\langle  \varepsilon^{\bm q}_h\cdot\bm n+\bm \beta\cdot\bm n\varepsilon^{\widehat y}_h  + (h^{-1}+\tau_1)(\varepsilon^y_h - \varepsilon^{\widehat y}_h),P_M\Psi\rangle_{\partial\mathcal T_h}\\
	&= (\varepsilon^{\bm q}_h,\bm{\Phi})_{\mathcal T_h}-( \varepsilon^y_h,\nabla\cdot \bm{\Phi})_{\mathcal T_h} + ( \varepsilon^y_h,\nabla\cdot\delta^{\bm{\Phi}})_{\mathcal T_h}-\langle  \varepsilon^{\widehat y}_h,\delta^{\bm{\Phi}}\cdot\bm n\rangle_{\partial\mathcal T_h}-(\varepsilon^{\bm q}_h+\bm \beta\varepsilon^y_h,  \nabla \Psi)_{\mathcal T_h}\\
	&\quad+(\varepsilon^{\bm q}_h+\bm \beta\varepsilon^y_h,  \nabla \delta^\Psi)_{\mathcal T_h} - (\nabla\cdot\bm\beta \varepsilon^y_h,\Psi)_{\mathcal T_h} + (\nabla\cdot\bm\beta \varepsilon^y_h,\delta^\Psi)_{\mathcal T_h}\\
	& \quad- \langle  \varepsilon^{\bm q}_h\cdot\bm n+\bm \beta\cdot\bm n\varepsilon^{\widehat y}_h +(h^{-1}+ \tau_1)(\varepsilon^y_h - \varepsilon^{\widehat y}_h),\delta^\Psi - \delta^{\widehat\Psi}\rangle_{\partial\mathcal T_h}.
	\end{align*}
	Here we used $\langle\varepsilon^{\widehat y}_h,\bm \Phi\cdot\bm n\rangle_{\partial\mathcal T_h}=0$, which holds since $\varepsilon^{\widehat y}_h$ is single-valued function on interior edges and $\varepsilon^{\widehat y}_h=0$ on $\varepsilon^{\partial}_h$.
	
	Next, integration by parts gives
	\begin{equation}\label{inde_eq2}
	\begin{split}
	(\varepsilon^y_h,\nabla\cdot\delta^{\bm \Phi})_{\mathcal{T}_h}
	&= \langle \varepsilon^y_h,\delta^{\bm \Phi} \cdot\bm n\rangle_{\partial\mathcal T_h}-(\nabla\varepsilon^y_h,\delta^{\bm \Phi})_{\mathcal{T}_h} = \langle \varepsilon^y_h,\delta^{\bm \Phi}\cdot\bm n\rangle_{\partial\mathcal T_h},\\
	(\varepsilon^{\bm q}_h, \nabla \delta^{ \Psi})_{\mathcal{T}_h}&=\langle \varepsilon^{\bm q}_h \cdot\bm n, \delta^{ \Psi}\rangle_{\partial\mathcal T_h}-(\nabla\cdot \varepsilon^{\bm q}_h , \delta^{ \Psi})_{\mathcal T_h} = \langle \varepsilon^{\bm q}_h \cdot\bm n, \delta^{ \Psi}\rangle_{\partial\mathcal T_h},\\
	(\bm\beta \varepsilon_h^y, \nabla \delta^{ \Psi})_{\mathcal{T}_h}&=\langle \bm{\beta} \cdot\bm n \varepsilon_h^y, \delta^{ \Psi}\rangle_{\partial\mathcal T_h}- (\nabla\cdot \bm{\beta} \varepsilon_h^y, \delta^{ \Psi})_{\mathcal T_h}  - (\bm{\beta} \nabla\varepsilon_h^y, \delta^{ \Psi})_{\mathcal T_h}.
	\end{split}
	\end{equation}
	We have
	\begin{align*}
	\hspace{3em}&\hspace{-3em}  \mathscr B_1 (\varepsilon^{\bm q}_h,\varepsilon^y_h,\varepsilon^{\widehat y}_h;{\bm\Pi}\bm{\Phi},{\Pi}\Psi,P_M\Psi)\\
	& =\|  \varepsilon_h^y\|_{\mathcal T_h}^2 + \langle \varepsilon^y_h - \varepsilon^{\widehat y}_h,\delta^{\bm \Phi}\cdot\bm n +\bm{\beta}\cdot\bm n  \delta^{\Psi} \rangle_{\partial\mathcal T_h} - (\nabla \varepsilon_h^y,\bm{\beta}\delta^{\Psi})_{\mathcal T_h}\\
	& \quad - \langle   (h^{-1}+ \tau_1)(\varepsilon^y_h-\varepsilon^{\widehat y}_h),\delta^\Psi - \delta^{\widehat\Psi}\rangle_{\partial\mathcal T_h}.
	\end{align*}
	On the other hand,  $\Psi = 0$ on $\varepsilon_h^\partial $  and \eqref{error_equation_L2k1} in \Cref{lemma:step1_first_lemma} give
	\begin{align*}
	\hspace{3em}&\hspace{-3em}  \mathscr B_1 (\varepsilon^{\bm q}_h,\varepsilon^y_h,\varepsilon^{\widehat y}_h;{\bm\Pi}\bm{\Phi},{\Pi}\Psi,P_M\Psi)\\
	&= ( \bm \beta \delta^y, \nabla {\Pi}\Psi)_{{\mathcal{T}_h}}
	+(\nabla\cdot\bm{\beta}\delta^y,{\Pi}\Psi)_{\mathcal T_h} + \langle \widehat{\bm \delta}_1,\delta^{\Psi} - \delta^{\widehat \Psi} \rangle_{\partial{{\mathcal{T}_h}}}.
	\end{align*}
	Comparing the above two equalities, we get
	\begin{align*}
	\|  \varepsilon_h^y\|_{\mathcal T_h}^2  &= - \langle \varepsilon^y_h - \varepsilon^{\widehat y}_h,\delta^{\bm \Phi}\cdot\bm n +\bm{\beta}\cdot\bm n  \delta^{\Psi} \rangle_{\partial\mathcal T_h} \\
	& \quad + (\nabla \varepsilon_h^y,\bm{\beta}\delta^{\Psi})_{{\mathcal{T}_h}}+( \bm \beta \delta^y, \nabla {\Pi}\Psi)_{{\mathcal{T}_h}}+(\nabla\cdot\bm{\beta}\delta^y,{\Pi}\Psi)_{\mathcal T_h}\\
	&\quad +\langle   (h^{-1} + \tau_1)(\varepsilon^y_h-\varepsilon^{\widehat y}_h) 
	 \widehat{\bm \delta}_1,\delta^\Psi - \delta^{\widehat\Psi}\rangle_{\partial\mathcal T_h}\\
	&=:R_1+R_2+R_3+R_4+R_5.
	\end{align*}
	For the terms $R_1$ and $R_2$,  \Cref{nabla_ine} and \Cref{lemma:step2_main_lemma} give
	\begin{align*}
	R_1 &= - \langle \varepsilon^y_h - \varepsilon^{\widehat y}_h,\delta^{\bm \Phi}\cdot\bm n +\bm{\beta}\cdot\bm n  \delta^{\Psi} \rangle_{\partial\mathcal T_h}\\
	     &\le    h^{-\frac 1 2}\|\varepsilon^y_h - \varepsilon^{\widehat y}_h\|_{\partial \mathcal T_h} ~h^{\frac 1 2} \|\delta^{\bm \Phi}\cdot\bm n +\bm{\beta}\cdot\bm n  \delta^{\Psi}\|_{\partial\mathcal T_h}\\
	& \le   h^{-\frac 1 2}\|\varepsilon^y_h - \varepsilon^{\widehat y}_h\|_{\partial \mathcal T_h}  \|\delta^{\bm \Phi}\cdot\bm n +\bm{\beta}\cdot\bm n  \delta^{\Psi}\|_{\mathcal T_h}\\
	& \le  C h^{-\frac 1 2}\|\varepsilon^y_h - \varepsilon^{\widehat y}_h\|_{\partial \mathcal T_h}  (\|\delta^{\bm \Phi}\|_{\mathcal T_h}+ \|  \delta^{\Psi}\|_{\mathcal T_h})\\
	&\le C( h^{s_{\bm q}+1}\norm{\bm q}_{s^{\bm q},\Omega} + h^{s_{y}}\norm{y}_{s^{y},\Omega}) \|\varepsilon_h^y\|_{\mathcal T_h},\\
	\bigskip
	R_2 &=(\nabla \varepsilon_h^y,\bm{\beta}\delta^{\Psi})_{{\mathcal{T}_h}} \le C \|\nabla \varepsilon_h^y\|_{{\mathcal{T}_h}} \|\delta^{\Psi}\|_{{\mathcal{T}_h}}\\
	& \le C( h^{s_{\bm q}+1}\norm{\bm q}_{s^{\bm q},\Omega} + h^{s_{y}}\norm{y}_{s^{y},\Omega}) \|\varepsilon_h^y\|_{\mathcal T_h}.
	\end{align*}
	By a simple triangle inequality for terms $R_3$ and $R_4$, we have
	\begin{align*}
	R_3 &=( \bm \beta \delta^y, \nabla {\Pi}\Psi)_{{\mathcal{T}_h}}\le  C\|\delta^y\|_{\mathcal T_h} \|\nabla {\Pi}\Psi\|_{\mathcal T_h}\\
	&\le C\|\delta^y\|_{\mathcal T_h} ( \|\nabla \delta^\Psi\|_{\mathcal T_h} +\|\nabla\Psi\|_{\mathcal T_h})\\
	&\le C\|\delta^y\|_{\mathcal T_h} (h\|\Psi\|_{2,\Omega} + \|\Psi\|_{1,\Omega}  ) \le C\|\delta^y\|_{\mathcal T_h} \|\Psi\|_{2,\Omega}\\
	&\le C( h^{s_{\bm q}+1}\norm{\bm q}_{s^{\bm q},\Omega} + h^{s_{y}}\norm{y}_{s^{y},\Omega}) \|\varepsilon_h^y\|_{\mathcal T_h},\\
	R_4 &=(\nabla\cdot\bm{\beta}\delta^y,{\Pi}\Psi)_{\mathcal T_h}\le  C\|\delta^y\|_{\mathcal T_h} \| {\Pi}\Psi\|_{\mathcal T_h}\\
	&\le C\|\delta^y\|_{\mathcal T_h}  ( \| \delta^\Psi\|_{\mathcal T_h} +\|\Psi\|_{\mathcal T_h})\\
	&\le C\|\delta^y\|_{\mathcal T_h} (h^2\|\Psi\|_{2,\Omega} +\|\Psi\|_{\Omega}  ) \le C\|\delta^y\|_{\mathcal T_h} \|\Psi\|_{2,\Omega}\\
	&\le C( h^{s_{\bm q}+1}\norm{\bm q}_{s^{\bm q},\Omega} + h^{s_{y}}\norm{y}_{s^{y},\Omega}) \|\varepsilon_h^y\|_{\mathcal T_h}.
	\end{align*}
	For the term $R_5$, we have
	\begin{align*}
	R_5 &=\langle   (h^{-1}+ \tau_1)(\varepsilon^y_h-\varepsilon^{\widehat y}_h) + \widehat{\bm \delta}_1,\delta^\Psi - \delta^{\widehat\Psi}\rangle_{\partial\mathcal T_h}\\
	&\le C(h^{-1}\|(\varepsilon^y_h-\varepsilon^{\widehat y}_h) \|_{\partial\mathcal T_h} + \|\widehat{\bm \delta}_1\|_{\partial\mathcal T_h} )\|\delta^\Psi - \delta^{\widehat\Psi}\|_{\partial\mathcal T_h}\\
	&\le C( h^{s_{\bm q}+1}\norm{\bm q}_{s^{\bm q},\Omega} + h^{s_{y}}\norm{y}_{s^{y},\Omega}) \|\varepsilon_h^y\|_{\mathcal T_h}.
	\end{align*}
	Finally, we complete the proof  by summing the estimates for $R_1$ to $R_5$.
\end{proof}
As a consequence, a simple application of the triangle inequality gives optimal convergence rates for $\|\bm q -\bm q_h(u)\|_{\mathcal T_h}$ and $\|y -y_h(u)\|_{\mathcal T_h}$:

\begin{lemma}\label{lemma:step3_conv_rates}
	\begin{subequations}
		\begin{align}
		\|\bm q -\bm q_h(u)\|_{\mathcal T_h}&\le \|\delta^{\bm q}\|_{\mathcal T_h} + \|\varepsilon_h^{\bm q}\|_{\mathcal T_h} \lesssim  h^{s_{\bm q}}\norm{\bm q}_{s^{\bm q},\Omega} + h^{s_{y}-1}\norm{y}_{s^{y},\Omega},\\
		\|y -y_h(u)\|_{\mathcal T_h}&\le \|\delta^{y}\|_{\mathcal T_h} + \|\varepsilon_h^{y}\|_{\mathcal T_h} \lesssim h^{s_{\bm q}+1}\norm{\bm q}_{s^{\bm q},\Omega} + h^{s_{y}}\norm{y}_{s^{y},\Omega}.
		\end{align}
	\end{subequations}
\end{lemma}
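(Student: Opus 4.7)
The statement is essentially an immediate corollary of the preceding two lemmas, so my plan is short and entirely assembly: triangle inequality plus substitution.

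First, I would decompose each error using the $L^2$ projections introduced in Section~\ref{sec:Projectionoperator}:
\[
\bm q - \bm q_h(u) = (\bm q - \bm\Pi\bm q) + (\bm\Pi\bm q - \bm q_h(u)) = \delta^{\bm q} + \varepsilon_h^{\bm q},\qquad
y - y_h(u) = \delta^{y} + \varepsilon_h^{y}.
\]
Applying the triangle inequality in $L^2(\mathcal T_h)$ gives the first inequality in each line of the statement for free.

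Next, I would bound the four terms on the right hand side. For the projection errors $\|\delta^{\bm q}\|_{\mathcal T_h}$ and $\|\delta^{y}\|_{\mathcal T_h}$, I would simply invoke the classical estimates \eqref{classical_ine}, which yield $\|\delta^{\bm q}\|_{\mathcal T_h}\lesssim h^{s_{\bm q}}\norm{\bm q}_{s_{\bm q},\Omega}$ and $\|\delta^{y}\|_{\mathcal T_h}\lesssim h^{s_{y}}\norm{y}_{s_{y},\Omega}$. For the discrete errors, I would cite Lemma~\ref{lemma:step2_main_lemma} (which controls $\|\varepsilon_h^{\bm q}\|_{\mathcal T_h}$ by $h^{s_{\bm q}}\norm{\bm q}_{s_{\bm q},\Omega} + h^{s_y - 1}\norm{y}_{s_y,\Omega}$) and Lemma~\ref{e_sec} (which gives $\|\varepsilon_h^{y}\|_{\mathcal T_h}\lesssim h^{s_{\bm q}+1}\norm{\bm q}_{s_{\bm q},\Omega} + h^{s_{y}}\norm{y}_{s_{y},\Omega}$). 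Summing the two contributions for the flux, the $h^{s_{\bm q}}$ rate absorbs both projection and discrete pieces, and the only remaining term is $h^{s_y-1}\norm{y}_{s_y,\Omega}$. Analogously, for the scalar variable the $h^{s_y}$ term dominates its projection counterpart and we are left with the stated bound.

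There is no real obstacle here: all of the analytical work (the energy argument yielding the flux estimate, and the duality argument yielding the $L^2$ estimate for $y$) has already been carried out. The only small sanity check I would perform before writing the final line is that the powers of $h$ in the two contributions collected by the triangle inequality are consistent with the ones claimed on the right hand side, i.e.\ that $\min\{s_{\bm q}, s_{\bm q}\} = s_{\bm q}$ for the flux and $\min\{s_y, s_y\} = s_y$ for the scalar, which is trivially true. Consequently, the proof reduces to one line of triangle inequality followed by a direct citation of Lemmas~\ref{lemma:step2_main_lemma} and~\ref{e_sec} together with \eqref{classical_ine}, and no auxiliary technical lemma is required.
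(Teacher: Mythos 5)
Your proposal is correct and is exactly the argument the paper intends: the lemma is stated there as "a simple application of the triangle inequality," combining the projection estimates \eqref{classical_ine} with \Cref{lemma:step2_main_lemma} for $\varepsilon_h^{\bm q}$ and \Cref{e_sec} for $\varepsilon_h^{y}$, precisely as you do. No gaps; your bookkeeping of the $h$-powers matches the stated rates.
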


\subsubsection{Step 4: The error equation for part 2 of the auxiliary problem \eqref{HDG_u_b}.}
Next, we focus on the dual variables, i.e., the state $z$ and the flux $\bm p$, and estimate the error between the solutions of the auxiliary problem and the mixed form \eqref{mixed_a} -  \eqref{mixed_d} of the optimality system. Define
\begin{equation}\label{notation_3}
\begin{split}
\delta^{\bm p} &=\bm p-{\bm\Pi}\bm p,  \qquad\qquad\qquad \qquad\qquad\qquad\;\;\;\;\varepsilon^{\bm p}_h={\bm\Pi} \bm p-\bm p_h(u),\\
\delta^z&=z- {\Pi} z, \qquad\qquad\qquad \qquad\qquad\qquad\;\;\;\; \;\varepsilon^{z}_h={\Pi} z-z_h(u),\\
\delta^{\widehat z} &= z-P_Mz,  \qquad\qquad\qquad\qquad\qquad\qquad \;\;\; \varepsilon^{\widehat z}_h=P_M z-\widehat{z}_h(u),\\
\widehat {\bm\delta}_2 &= \delta^{\bm p}\cdot\bm n + \bm{\beta}\cdot\bm n \delta^{\widehat z} + (h^{-1}+\tau_2)(\delta^z - \delta^{\widehat z}).
\end{split}
\end{equation}

\begin{lemma}\label{lemma:step4_first_lemma}
	We have
	\begin{align}
	\hspace{3em}&\hspace{-3em}  \mathscr B_2 (\varepsilon_h^{\bm p},\varepsilon_h^{z}, \varepsilon_h^{\widehat z}, \bm r_2, w_2, \mu_2) \nonumber\\
	&=( \bm \beta \delta^z, \nabla w_2)_{{\mathcal{T}_h}} - \langle \widehat{\bm \delta}_2, w_2 \rangle_{\partial{{\mathcal{T}_h}}} + \langle \widehat{\bm \delta}_2, \mu_2 \rangle_{\partial{{\mathcal{T}_h}}\backslash \varepsilon_h^{\partial}} +(y-y_h(u),w_2)_{\mathcal T_h}.\label{error_equation_L2k2}
	\end{align}
\end{lemma}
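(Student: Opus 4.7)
The plan is to mirror the proof of \Cref{lemma:step1_first_lemma} in Step~1, replacing the primal state equation by the adjoint equation and replacing the operator $\mathscr{B}_1$ by $\mathscr{B}_2$. First I would expand $\mathscr{B}_2(\bm{\Pi}\bm{p}, \Pi z, P_M z; \bm{r}_2, w_2, \mu_2)$ using definition \eqref{def_B2} and then move projections off the discrete test objects using the $L^2$-projection identities in \eqref{L2_projection}. Each removal of a projection generates a correction involving $\delta^{\bm p}$, $\delta^z$, or $\delta^{\widehat z}$, exactly as in Step~1 but with $\bm{\beta}$ replaced by $-\bm{\beta}$ in the convective contribution (because the adjoint uses $-\nabla\!\cdot\!(\bm{\beta} z)$) and with no divergence-of-$\bm{\beta}$ volume term.

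Next I would substitute the variational identities satisfied by the exact $(z, \bm p)$: namely
\begin{align*}
(\bm p, \bm r_2)_{\mathcal{T}_h} - (z,\nabla\!\cdot\!\bm r_2)_{\mathcal{T}_h} + \langle z, \bm r_2\cdot\bm n\rangle_{\partial\mathcal{T}_h} &= 0,\\
-(\bm p - \bm\beta z, \nabla w_2)_{\mathcal{T}_h} + \langle (\bm p - \bm\beta z)\cdot\bm n, w_2\rangle_{\partial\mathcal{T}_h} &= (y - y_d, w_2)_{\mathcal{T}_h},\\
\langle (\bm p - \bm\beta z)\cdot\bm n, \mu_2\rangle_{\partial\mathcal{T}_h\backslash\varepsilon_h^\partial} &= 0,
\end{align*}
using that $z = 0$ on $\Gamma$ (so all $\langle z, \cdot\rangle_{\varepsilon_h^\partial}$ terms vanish, and $P_M z = 0$ on $\varepsilon_h^\partial$). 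Then I would subtract the auxiliary HDG equation \eqref{HDG_u_b}; this subtraction introduces the right-hand side $(y - y_h(u), w_2)_{\mathcal{T}_h}$ because the exact equation carries $(y - y_d, w_2)$ while the discrete one carries $(y_h(u) - y_d, w_2)$.

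The last step is bookkeeping: regroup the resulting boundary contributions of the form $\delta^{\bm p}\!\cdot\!\bm n$, $\bm\beta\!\cdot\!\bm n\,\delta^{\widehat z}$, and $(h^{-1}+\tau_2)(\delta^z - \delta^{\widehat z})$ into the single quantity $\widehat{\bm\delta}_2$ defined in \eqref{notation_3}, and collect the volume convective correction $(\bm\beta\delta^z, \nabla w_2)_{\mathcal{T}_h}$. Splitting $\langle \widehat{\bm\delta}_2, w_2\rangle_{\partial\mathcal{T}_h}$ against $\langle \widehat{\bm\delta}_2, \mu_2\rangle_{\partial\mathcal{T}_h\backslash\varepsilon_h^\partial}$ is done exactly as in Step~1, yielding \eqref{error_equation_L2k2}.

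The main obstacle I expect is sign-bookkeeping in the convective terms: the adjoint has $-\nabla\!\cdot\!(\bm\beta z)$ where the state has $+\bm\beta\!\cdot\!\nabla y$, so the signs of the $\bm\beta\!\cdot\!\bm n$ boundary contributions and of the $(\bm\beta \delta^{\widehat z}, \cdot)$ interface terms are flipped relative to Step~1. A related subtlety is that $\Pi z$ and $P_M z$ both vanish in the appropriate sense on $\varepsilon_h^\partial$, so the boundary-trace terms there collapse rather than producing a $u$-dependent contribution as happened in Step~1; once this is handled, the remainder of the manipulations are identical to those carried out for $\mathscr{B}_1$.
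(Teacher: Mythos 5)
Your proposal is correct and is exactly the argument the paper intends: the paper omits this proof as being analogous to the Step 1 lemma, and you carry out that analogy with the right exact adjoint identities (element-wise forms of \eqref{mixed_c}--\eqref{mixed_d} plus single-valuedness of $(\bm p-\bm\beta z)\cdot\bm n$), the projection manipulations, the use of $z=0$ on $\Gamma$ (so $P_Mz=0$ and $\delta^{\widehat z}=0$ on $\varepsilon_h^\partial$), and the subtraction of \eqref{HDG_u_b} which produces the $(y-y_h(u),w_2)_{\mathcal T_h}$ term. One bookkeeping remark: doing the computation carefully, the convective corrections come out as $-(\bm\beta\delta^z,\nabla w_2)_{\mathcal T_h}$ and with $-\bm\beta\cdot\bm n\,\delta^{\widehat z}$ inside the boundary remainder (precisely the sign flips you anticipate), so the signs printed in \eqref{error_equation_L2k2} and in the definition of $\widehat{\bm\delta}_2$ in \eqref{notation_3} are inherited from Step 1 and are harmless typos; they do not affect the Step 5 estimates, which only use the magnitudes of these terms.
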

The proof is similar to the proof of \Cref{lemma:step4_first_lemma} and is omitted.

\subsubsection{Step 5: Estimate for $\varepsilon_h^{\boldmath p}$.}
Before we estimate  $\varepsilon_h^{\bm p}$, we give the following discrete Poincar{\'e} inequality from \cite{MR3440284}.
\begin{lemma}\label{lemma:discr_Poincare_ineq}
	We have
	\begin{align}\label{poin_in}
	\|\varepsilon_h^z\|_{\mathcal T_h} \le C(\|\nabla \varepsilon_h^z\|_{\mathcal T_h} + h^{-\frac 1 2} \|\varepsilon_h^z - \varepsilon_h^{\widehat z}\|_{\partial\mathcal T_h}).
	\end{align}
\end{lemma}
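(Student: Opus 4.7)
The plan is to establish this discrete Poincaré inequality by a duality argument with an auxiliary Poisson problem, exploiting the convexity of $\Omega$ and the facts that $\varepsilon_h^{\widehat z}$ is single-valued on interior faces and vanishes on $\varepsilon_h^\partial$ (since $z = 0$ on $\Gamma$ forces $P_M z = 0$ there, and the auxiliary HDG problem enforces $\widehat z_h(u) = 0$ on the boundary).

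First I would introduce $\varphi \in H_0^1(\Omega)$ as the solution of the dual problem $-\Delta \varphi = \varepsilon_h^z$ in $\Omega$ with $\varphi = 0$ on $\Gamma$. Convexity of $\Omega$ yields the elliptic regularity estimate $\|\varphi\|_{2,\Omega} \leq C\|\varepsilon_h^z\|_{\mathcal T_h}$. Then I would compute
\[
\|\varepsilon_h^z\|_{\mathcal T_h}^2 = (\varepsilon_h^z, -\Delta \varphi)_{\mathcal T_h} = (\nabla \varepsilon_h^z, \nabla \varphi)_{\mathcal T_h} - \langle \varepsilon_h^z, \nabla \varphi \cdot \bm n\rangle_{\partial \mathcal T_h}
\]
by element-wise integration by parts.

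Next I would use the single-valuedness of $\varepsilon_h^{\widehat z}$ on interior faces together with $\varepsilon_h^{\widehat z} = 0$ on $\varepsilon_h^\partial$ and the continuity of $\nabla\varphi\cdot\bm n$ across interior faces (since $\varphi \in H^2(\Omega)$) to conclude that $\langle \varepsilon_h^{\widehat z}, \nabla \varphi \cdot \bm n\rangle_{\partial \mathcal T_h} = 0$. Subtracting this zero term gives
\[
\|\varepsilon_h^z\|_{\mathcal T_h}^2 = (\nabla \varepsilon_h^z, \nabla \varphi)_{\mathcal T_h} - \langle \varepsilon_h^z - \varepsilon_h^{\widehat z}, \nabla \varphi \cdot \bm n\rangle_{\partial \mathcal T_h}.
\]
The first term is bounded by $\|\nabla \varepsilon_h^z\|_{\mathcal T_h} \|\varphi\|_{1,\Omega}$ by Cauchy--Schwarz. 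For the second, I would apply the standard element-wise trace inequality $\|\nabla\varphi\cdot\bm n\|_{\partial K} \lesssim h^{-1/2}\|\nabla\varphi\|_K + h^{1/2}\|\nabla^2\varphi\|_K$ summed over $K \in \mathcal T_h$, which gives $\|\nabla\varphi\cdot\bm n\|_{\partial\mathcal T_h} \lesssim h^{-1/2}\|\varphi\|_{2,\Omega}$. Combining these bounds with the regularity estimate yields
\[
\|\varepsilon_h^z\|_{\mathcal T_h}^2 \lesssim \bigl(\|\nabla \varepsilon_h^z\|_{\mathcal T_h} + h^{-1/2}\|\varepsilon_h^z - \varepsilon_h^{\widehat z}\|_{\partial \mathcal T_h}\bigr)\|\varepsilon_h^z\|_{\mathcal T_h},
\]
and dividing through completes the proof.

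The only delicate point is verifying that the boundary consistency $\varepsilon_h^{\widehat z}|_{\varepsilon_h^\partial} = 0$ is indeed built into the auxiliary problem \eqref{HDG_u_b} (so that the face-jump cancellation argument works uniformly up to the boundary); this is where the choice of homogeneous Dirichlet data for the adjoint in \eqref{mixed_c}--\eqref{mixed_d} plays a role. Everything else is standard bookkeeping with trace and regularity estimates, which is why this lemma is cited directly from \cite{MR3440284} without reproducing the proof.
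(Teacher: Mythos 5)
Your proposal is correct, but note that the paper itself does not prove this lemma at all: it is quoted directly from \cite{MR3440284} (just as \Cref{nabla_ine} is), so there is no in-paper argument to compare against. Your duality proof is a legitimate, self-contained substitute, and it is in the same spirit as the paper's own Step~3 argument (\Cref{e_sec}), which also rests on $H^2$ elliptic regularity on the convex domain. The mechanics check out: the elementwise integration by parts, the insertion of $\langle \varepsilon_h^{\widehat z},\nabla\varphi\cdot\bm n\rangle_{\partial\mathcal T_h}=0$ (valid because $\varepsilon_h^{\widehat z}$ is single-valued on interior faces, $\nabla\varphi\cdot\bm n$ has a single trace there since $\varphi\in H^2(\Omega)$, and $\varepsilon_h^{\widehat z}=P_Mz-\widehat z_h(u)=0$ on $\varepsilon_h^\partial$ because $z\in H^1_0(\Omega)$ and the scheme sets the boundary trace of the adjoint to zero), and the scaled trace inequality $\|\nabla\varphi\cdot\bm n\|_{\partial\mathcal T_h}\lesssim h^{-1/2}\|\varphi\|_{2,\Omega}$ all hold, and dividing by $\|\varepsilon_h^z\|_{\mathcal T_h}$ gives exactly \eqref{poin_in}. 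Two small caveats worth making explicit: (i) your route uses convexity of $\Omega$ for the $H^2$ bound and (implicitly) shape regularity/quasi-uniformity so that $h_K^{-1/2}\lesssim h^{-1/2}$ in the summed trace inequality — both are consistent with the standing assumptions of \Cref{sec:analysis}, but the inequality itself is known to hold without convexity (e.g., via Brenner-type Poincar\'e inequalities for piecewise $H^1$ functions, which is closer to how such results are usually stated in the HDG literature); (ii) your closing worry about whether $\varepsilon_h^{\widehat z}=0$ on $\varepsilon_h^\partial$ is "built in" is already settled by the paper's notation — the definition of $\widehat z_h(u)$ together with $P_Mz=0$ on boundary faces gives it directly, and the paper uses this same fact in \Cref{lemma:step5_main_lemma}.
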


\begin{lemma}\label{lemma:step5_main_lemma}
	We have
	\begin{subequations}
		\begin{align}
		\hspace{3em}&\hspace{-3em}\norm{\varepsilon_h^{\bm{p}}}_{\mathcal{T}_h}+h^{-\frac 1 2}\|{\varepsilon_h^z-\varepsilon_h^{\widehat{z}}}\|_{\partial \mathcal T_h}\nonumber \\ 
		&\lesssim  h^{s_{\bm p}}\norm{\bm p}_{s^{\bm p},\Omega} + h^{s_{z}-1}\norm{z}_{s^{z},\Omega} +h^{s_{\bm q}+1}\norm{\bm q}_{s^{\bm q},\Omega} + h^{s_{y}}\norm{y}_{s^{y},\Omega},\label{error_p}\\
		\norm{\varepsilon_h^{{z}}}_{\mathcal{T}_h} &\lesssim  h^{s_{\bm p}}\norm{\bm p}_{s^{\bm p},\Omega} + h^{s_{z}-1}\norm{z}_{s^{z},\Omega} +h^{s_{\bm q}+1}\norm{\bm q}_{s^{\bm q},\Omega} + h^{s_{y}}\norm{y}_{s^{y},\Omega}.\label{error_z}
		\end{align}
	\end{subequations}
\end{lemma}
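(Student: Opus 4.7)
My plan mirrors Step 2 applied to the dual triple $(\varepsilon_h^{\bm p},\varepsilon_h^z,\varepsilon_h^{\widehat z})$, and then invokes the discrete Poincar\'e inequality to recover the $L^2$ bound on $\varepsilon_h^z$ alone. The first move is to take $(\bm r_2,w_2,\mu_2)=(\varepsilon_h^{\bm p},\varepsilon_h^z,\varepsilon_h^{\widehat z})$ in the error equation \eqref{error_equation_L2k2} from \Cref{lemma:step4_first_lemma}. By the $\mathscr B_2$ identity in \Cref{property_B}, together with $\nabla\cdot\bm\beta\le 0$ and hypothesis \textbf{(A3)}, the left-hand side dominates $\norm{\varepsilon_h^{\bm p}}_{\mathcal T_h}^2+h^{-1}\norm{\varepsilon_h^z-\varepsilon_h^{\widehat z}}_{\partial\mathcal T_h\backslash\varepsilon_h^\partial}^2+h^{-1}\norm{\varepsilon_h^z}_{\varepsilon_h^\partial}^2$. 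Since $z=0$ on $\Gamma$ forces $\varepsilon_h^{\widehat z}=0$ on $\varepsilon_h^\partial$, these two boundary pieces assemble into $h^{-1}\norm{\varepsilon_h^z-\varepsilon_h^{\widehat z}}_{\partial\mathcal T_h}^2$.

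The right-hand side, after combining the two boundary contributions from \eqref{error_equation_L2k2}, breaks into three terms
\[
T_1=(\bm\beta\delta^z,\nabla\varepsilon_h^z)_{\mathcal T_h},\quad
T_2=-\langle\widehat{\bm\delta}_2,\varepsilon_h^z-\varepsilon_h^{\widehat z}\rangle_{\partial\mathcal T_h\backslash\varepsilon_h^\partial}-\langle\widehat{\bm\delta}_2,\varepsilon_h^z\rangle_{\varepsilon_h^\partial},\quad
T_3=(y-y_h(u),\varepsilon_h^z)_{\mathcal T_h}.
\]
For $T_1$, I would apply the dual analog of \Cref{nabla_ine} (its proof transfers verbatim because $\mathscr B_2$ has the same structure as $\mathscr B_1$) to bound $\norm{\nabla\varepsilon_h^z}_{\mathcal T_h}\lesssim \norm{\varepsilon_h^{\bm p}}_{\mathcal T_h}+h^{-1/2}\norm{\varepsilon_h^z-\varepsilon_h^{\widehat z}}_{\partial\mathcal T_h}$ and then use Young's inequality with a small parameter, producing a reabsorbable term plus $\norm{\delta^z}_{\mathcal T_h}^2$. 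For $T_2$, weighted Cauchy--Schwarz and Young yield $h\norm{\widehat{\bm\delta}_2}_{\partial\mathcal T_h}^2$ plus a reabsorbable boundary term; the projection estimates \eqref{classical_ine} applied componentwise to $\widehat{\bm\delta}_2$ give the contribution $h^{2s_{\bm p}}\norm{\bm p}_{s_{\bm p},\Omega}^2+h^{2s_z-2}\norm{z}_{s_z,\Omega}^2$.

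The main obstacle is $T_3$, which couples the dual error to the primal auxiliary error and is exactly why $\norm{\bm q}_{s_{\bm q},\Omega}$ and $\norm{y}_{s_y,\Omega}$ appear in the final bound. My plan is to control $\norm{\varepsilon_h^z}_{\mathcal T_h}$ via the discrete Poincar\'e inequality \eqref{poin_in}, so that the piece multiplying $\norm{y-y_h(u)}_{\mathcal T_h}$ is once again reabsorbable into the left-hand side, while the remaining factor is bounded by \Cref{lemma:step3_conv_rates}. Summing all estimates and choosing the Young parameters small enough to absorb $\norm{\varepsilon_h^{\bm p}}_{\mathcal T_h}^2$ and $h^{-1}\norm{\varepsilon_h^z-\varepsilon_h^{\widehat z}}_{\partial\mathcal T_h}^2$ on the LHS yields \eqref{error_p}. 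Then \eqref{error_z} follows by one more application of the discrete Poincar\'e inequality \eqref{poin_in} combined with \eqref{error_p} and the dual analog of \Cref{nabla_ine}.
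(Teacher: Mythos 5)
Your proposal is correct and follows essentially the same route as the paper: test \eqref{error_equation_L2k2} with $(\varepsilon_h^{\bm p},\varepsilon_h^z,\varepsilon_h^{\widehat z})$, use the coercivity identity of \Cref{property_B} (with $\varepsilon_h^{\widehat z}=0$ on $\varepsilon_h^\partial$), bound $T_1,T_2$ by the dual analogue of \Cref{nabla_ine}, Young's inequality, and the projection estimates, handle $T_3$ via the discrete Poincar\'e inequality plus \Cref{lemma:step3_conv_rates} with absorption, and finally recover \eqref{error_z} from \eqref{poin_in}, \eqref{error_p}, and the dual gradient bound. The only (harmless) difference is that you explicitly keep and discard the nonnegative $-\tfrac12(\nabla\cdot\bm\beta\,\varepsilon_h^z,\varepsilon_h^z)_{\mathcal T_h}$ term, which the paper silently omits.
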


\begin{proof}
	First, we note the key inequality in \Cref{nabla_ine} is valid with $ (z,\bm p, \hat z) $ in place of $ (y,\bm q, \hat y) $.  This gives
	\begin{align}\label{nabla_z}
	\|\nabla \varepsilon_h^z\|_{\mathcal T_h}  \le  \|\varepsilon^{\bm p}_h\|_{\mathcal T_h}+Ch^{-\frac1 2}\|\varepsilon^z_h-\varepsilon^{\widehat z}_h\|_{\partial\mathcal T_h},
	\end{align}
	which we use below.  Next, since $\varepsilon_h^{\widehat z}=0$ on $\varepsilon_h^\partial$, the basic property of $ \mathscr B_2 $ in \Cref{property_B} gives
	\[ \mathscr B_2 (\varepsilon_h^{\bm p},\varepsilon_h^{ z}, \varepsilon_h^{\widehat z}, \varepsilon_h^{\bm p},\varepsilon_h^{z}, \varepsilon_h^{\widehat z}) =(\varepsilon_h^{\bm{p}},\varepsilon_h^{\bm{p}})_{\mathcal{T}_h}+ \|(h^{-1}+\tau_2+\frac 12 \bm{\beta} \cdot\bm n)^{\frac 12 } (\varepsilon_h^z-\varepsilon_h^{\widehat{z}})\|_{\partial\mathcal T_h}^2. \]
	Then taking $(\bm r_2, w_2,\mu_2) = (\bm \varepsilon_h^{\bm p},\varepsilon_h^z,\varepsilon_h^{\widehat z})$ in \eqref{error_equation_L2k2} in \Cref{lemma:step4_first_lemma} gives
	\begin{align*}
	(\varepsilon_h^{\bm{p}},\varepsilon_h^{\bm{p}})_{\mathcal{T}_h}& + \|(h^{-1}+\tau_2+\frac 12 \bm{\beta} \cdot\bm n)^{\frac 12 } (\varepsilon_h^z-\varepsilon_h^{\widehat{z}})\|_{\partial\mathcal T_h}^2\\
	&=( \bm \beta \delta^z, \nabla \varepsilon_h^z)_{{\mathcal{T}_h}}  -\langle \widehat {\bm\delta}_2,\varepsilon_h^z - \varepsilon_h^{\widehat z}\rangle_{\partial\mathcal T_h} + (y-y_h(u),\varepsilon_h^z)_{\mathcal T_h}\\
	& =: T_1+T_2+T_3.
	\end{align*}
	By the same argument as in \Cref{lemma:step2_main_lemma},  simply applying \eqref{nabla_z} and Young's inequality gives
	\begin{align*}
	T_1 &=  ( \bm \beta \delta^z, \nabla \varepsilon_h^z)_{{\mathcal{T}_h}} \le C \| \delta^z\|_{\mathcal T_h}^2 + \frac 1 4
	\|\varepsilon_h^{\bm{p}}\|_{\mathcal T_h}^2 + \frac 1 {4h} \|{\varepsilon_h^z-\varepsilon_h^{\widehat{z}}}\|_{\partial \mathcal T_h}^2,\\
	T_2 &=  -\langle \widehat {\bm\delta}_2,\varepsilon_h^z - \varepsilon_h^{\widehat z}\rangle_{\partial\mathcal T_h} \le 4h  \|\widehat {\bm\delta}\|_{\partial\mathcal T_h}^2 + \frac 1 {4h} \|{\varepsilon_h^z-\varepsilon_h^{\widehat{z}}}\|_{\partial \mathcal T_h}^2.
	\end{align*}
	Finally, for the term $T_3$, we have
	\begin{align*}
	T_3&= (y-y_h(u),\varepsilon_h^z)_{\mathcal T_h} \le  \|y-y_h(u)\|_{\mathcal T_h} \|\varepsilon_h^z\|_{\mathcal T_h}\\
	&\le C\|y-y_h(u)\|_{\mathcal T_h} (\|\nabla \varepsilon_h^z\|_{\mathcal T_h} + h^{-\frac 1 2} \|\varepsilon_h^z - \varepsilon_h^{\widehat z}\|_{\partial\mathcal T_h})\\
	&\le  C\|y-y_h(u)\|_{\mathcal T_h} (\|\varepsilon^{\bm p}_h\|_{\mathcal T_h}+h^{-\frac1 2}\|\varepsilon^z_h-\varepsilon^{\widehat z}_h\|_{\partial\mathcal T_h})\\
	&\le  C\|y-y_h(u)\|_{\mathcal T_h}^2 + \frac 1 4
	\|\varepsilon_h^{\bm{p}}\|_{\mathcal T_h}^2 + \frac 1 {4h} \|{\varepsilon_h^z-\varepsilon_h^{\widehat{z}}}\|_{\partial \mathcal T_h}^2.
	\end{align*}
	Summing $T_1$ to $T_3$ gives
	\begin{align*}
	\hspace{3em}&\hspace{-3em} \norm{\varepsilon_h^{\bm{p}}}_{\mathcal{T}_h}+h^{-\frac 1 2}\|{\varepsilon_h^z-\varepsilon_h^{\widehat{z}}}\|_{\partial \mathcal T_h}\\
	&\lesssim  h^{s_{\bm p}}\norm{\bm p}_{s^{\bm p},\Omega} + h^{s_{z}-1}\norm{z}_{s^{z},\Omega} +h^{s_{\bm q}+1}\norm{\bm q}_{s^{\bm q},\Omega} + h^{s_{y}}\norm{y}_{s^{y},\Omega}.
	\end{align*}
	Finally, \eqref{poin_in}, \eqref{error_p}, and \eqref{nabla_z} together imply \eqref{error_z}.
\end{proof}

As a consequence, a simple application of the triangle inequality gives optimal convergence rates for $\|\bm p -\bm p_h(u)\|_{\mathcal T_h}$ and $\|z -z_h(u)\|_{\mathcal T_h}$:

\begin{lemma}\label{lemma:step6_conv_rates}
	We have
	\begin{subequations}
		\begin{align}
		\|\bm p -\bm p_h(u)\|_{\mathcal T_h}
		&\lesssim h^{s_{\bm p}}\norm{\bm p}_{s^{\bm p},\Omega}  + h^{s_{z}-1}\norm{z}_{s^{z},\Omega} +h^{s_{\bm q}+1}\norm{\bm q}_{s^{\bm q},\Omega} + h^{s_{y}}\norm{y}_{s^{y},\Omega},\\
		\|z -z_h(u)\|_{\mathcal T_h} &\lesssim  h^{s_{\bm p}}\norm{\bm p}_{s^{\bm p},\Omega} + h^{s_{z}-1}\norm{z}_{s^{z},\Omega} + h^{s_{\bm q}+1}\norm{\bm q}_{s^{\bm q},\Omega} + h^{s_{y}}\norm{y}_{s^{y},\Omega}.
		\end{align}
	\end{subequations}
\end{lemma}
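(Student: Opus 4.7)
The plan is to prove both bounds by the same elementary triangle inequality argument that was used in Lemma \ref{lemma:step3_conv_rates}. Specifically, the notation introduced in \eqref{notation_3} gives the orthogonal-type splittings
\[
\bm p - \bm p_h(u) = \delta^{\bm p} + \varepsilon_h^{\bm p}, \qquad z - z_h(u) = \delta^z + \varepsilon_h^z,
\]
so by the triangle inequality
\[
\|\bm p - \bm p_h(u)\|_{\mathcal T_h} \le \|\delta^{\bm p}\|_{\mathcal T_h} + \|\varepsilon_h^{\bm p}\|_{\mathcal T_h}, \qquad \|z - z_h(u)\|_{\mathcal T_h} \le \|\delta^z\|_{\mathcal T_h} + \|\varepsilon_h^z\|_{\mathcal T_h}.
\]

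Next, I would bound the projection errors $\|\delta^{\bm p}\|_{\mathcal T_h}$ and $\|\delta^z\|_{\mathcal T_h}$ using the classical $L^2$-projection estimates in \eqref{classical_ine}, which give $\|\delta^{\bm p}\|_{\mathcal T_h} \lesssim h^{s_{\bm p}}\|\bm p\|_{s_{\bm p},\Omega}$ and $\|\delta^z\|_{\mathcal T_h} \lesssim h^{s_z}\|z\|_{s_z,\Omega} \le h^{s_z-1}\|z\|_{s_z,\Omega}$ (for $h$ bounded). Both of these are clearly dominated by corresponding terms already appearing on the right-hand sides of \eqref{error_p} and \eqref{error_z}.

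Finally, I would invoke Lemma \ref{lemma:step5_main_lemma} to control $\|\varepsilon_h^{\bm p}\|_{\mathcal T_h}$ and $\|\varepsilon_h^z\|_{\mathcal T_h}$ by the common expression
\[
h^{s_{\bm p}}\|\bm p\|_{s_{\bm p},\Omega} + h^{s_z - 1}\|z\|_{s_z,\Omega} + h^{s_{\bm q}+1}\|\bm q\|_{s_{\bm q},\Omega} + h^{s_y}\|y\|_{s_y,\Omega}.
\]
Summing the projection and $\varepsilon$-contributions yields both displayed bounds. There is no real obstacle here: all the substantive work has already been carried out in Steps 4 and 5, and this lemma is a bookkeeping consequence recorded in parallel to Lemma \ref{lemma:step3_conv_rates}.
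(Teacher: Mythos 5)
Your proposal is correct and coincides with the paper's own argument: the paper derives this lemma exactly as "a simple application of the triangle inequality," splitting $\bm p-\bm p_h(u)=\delta^{\bm p}+\varepsilon_h^{\bm p}$ and $z-z_h(u)=\delta^{z}+\varepsilon_h^{z}$, bounding the projection errors by \eqref{classical_ine} and the discrete errors by \Cref{lemma:step5_main_lemma}. Your extra remark that $h^{s_z}\lesssim h^{s_z-1}$ for bounded $h$ is a harmless bookkeeping step that makes the absorption of $\|\delta^z\|_{\mathcal T_h}$ explicit.
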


\subsubsection{Step 6: Estimate for $\|u-u_h\|_{\varepsilon_h^\partial}$ and $\norm {y-y_h}_{\mathcal T_h}$.}

Next, we bound the error between the solutions of the auxiliary problem and the HDG problem \eqref{HDG_full_discrete}.  We use these error bounds and the error bounds in \Cref{lemma:step3_conv_rates}, \Cref{lemma:step5_main_lemma}, and \Cref{lemma:step6_conv_rates} to obtain the main results.

For the remaining steps, we denote
\begin{equation*}
\begin{split}
\zeta_{\bm q} &=\bm q_h(u)-\bm q_h,\quad\zeta_{y} = y_h(u)-y_h,\quad\zeta_{\widehat y} = \widehat y_h(u)-\widehat y_h,\\
\zeta_{\bm p} &=\bm p_h(u)-\bm p_h,\quad\zeta_{z} = z_h(u)-z_h,\quad\zeta_{\widehat z} = \widehat z_h(u)-\widehat z_h.
\end{split}
\end{equation*}
Subtracting the auxiliary problem and the HDG problem gives the following error equations
\begin{subequations}\label{eq_yh}
	\begin{align}
	\mathscr B_1(\zeta_{\bm q},\zeta_y,\zeta_{\widehat y};\bm r_1, w_1,\mu_1)&=-\langle P_M u-u_h, \bm r_1\cdot \bm n + (\bm{\beta} \cdot\bm n -h^{-1}-\tau_1)w_1\rangle_{\varepsilon_h^\partial},\label{eq_yh_yhu}\\
	\mathscr B_2(\zeta_{\bm p},\zeta_z,\zeta_{\widehat z};\bm r_2, w_2,\mu_2)&=(\zeta_y, w_2)_{\mathcal T_h}\label{eq_zh_zhu}.
	\end{align}
\end{subequations}

\begin{lemma}
	If \textbf{(A1)} and \textbf{(A2)} hold, then
	\begin{align*}
	 \gamma\norm{u-u_h}_{\varepsilon_h^\partial}^2  + \norm {\zeta_y}_{\mathcal T_h}^2 &= \langle \gamma u+\bm p_h(u)\cdot\bm n +h^{-1}  z_h(u) + \tau_2 z_h(u),u-u_h\rangle_{\varepsilon_h^\partial} \\
	&\quad- \langle \gamma u_h+\bm p_h\cdot\bm n + h^{-1}  z_h+\tau_2 z_h,u-u_h\rangle_{\varepsilon_h^\partial}.
	\end{align*}
\end{lemma}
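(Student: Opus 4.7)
The plan is to produce an energy identity by symmetrizing the two error equations \eqref{eq_yh_yhu}--\eqref{eq_zh_zhu} using the key skew-adjoint property of the pair $(\mathscr B_1,\mathscr B_2)$ from \Cref{identical_equa}, and then match the resulting boundary expression to the right-hand side of the lemma using the discrete optimality condition \eqref{HDG_full_discrete_e} and the continuous optimality condition \eqref{mixed_e}.

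First I would take $(\bm r_1,w_1,\mu_1)=(\zeta_{\bm p},-\zeta_z,-\zeta_{\widehat z})$ in \eqref{eq_yh_yhu} and $(\bm r_2,w_2,\mu_2)=(-\zeta_{\bm q},\zeta_y,\zeta_{\widehat y})$ in \eqref{eq_zh_zhu}. Adding these two, by \Cref{identical_equa} the left-hand side collapses to $0$, yielding
\[
\|\zeta_y\|_{\mathcal T_h}^2 \;=\; \bigl\langle P_Mu-u_h,\;\zeta_{\bm p}\cdot\bm n-(\bm\beta\cdot\bm n-h^{-1}-\tau_1)\zeta_z\bigr\rangle_{\varepsilon_h^\partial}.
\]
Using assumption \textbf{(A2)}, the coefficient simplifies via $\bm\beta\cdot\bm n-h^{-1}-\tau_1=-h^{-1}-\tau_2$, so
\[
\|\zeta_y\|_{\mathcal T_h}^2 \;=\; \bigl\langle P_Mu-u_h,\;\zeta_{\bm p}\cdot\bm n+(h^{-1}+\tau_2)\zeta_z\bigr\rangle_{\varepsilon_h^\partial}.
\]
Now by \textbf{(A1)} the quantity $\zeta_{\bm p}\cdot\bm n+(h^{-1}+\tau_2)\zeta_z$ lies in $M_h(\partial)$ on each boundary face, so the defining property of the projection $P_M$ allows me to replace $P_Mu-u_h$ by $u-u_h$ on the left slot of the inner product.

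Next I would add $\gamma\|u-u_h\|_{\varepsilon_h^\partial}^2=\langle\gamma(u-u_h),u-u_h\rangle_{\varepsilon_h^\partial}$ to both sides and rearrange, splitting the resulting inner product into one piece built from the auxiliary solution $(\bm p_h(u),z_h(u))$ and one piece built from the discrete solution $(\bm p_h,z_h)$. This produces exactly the two boundary brackets in the stated identity. To finish I would observe that the second bracket is in fact zero: $\gamma u_h+\bm p_h\cdot\bm n+(h^{-1}+\tau_2)z_h$ belongs to $M_h(\partial)$ (again using \textbf{(A1)}), so pairing it with $u-u_h$ equals pairing it with $P_M(u-u_h)=P_Mu-u_h\in M_h(\partial)$, which vanishes by the discrete optimality equation \eqref{HDG_full_discrete_e} taken with $\mu_3=P_Mu-u_h$; since I only need an identity that \emph{contains} this bracket, retaining it is harmless and yields the form stated in the lemma.

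The only delicate point is bookkeeping: the signs in the test-function substitution must be chosen precisely so that the convective terms in $\mathscr B_1$ and $\mathscr B_2$ cancel via \Cref{identical_equa}, and the use of \textbf{(A1)} and \textbf{(A2)} must be made at exactly the right places to ensure that the boundary trace $\zeta_{\bm p}\cdot\bm n+(h^{-1}+\tau_2)\zeta_z$ lies in the discrete multiplier space. I do not foresee any analytic obstruction beyond this algebraic manipulation.
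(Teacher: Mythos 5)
Your proposal is correct and follows essentially the same route as the paper: test the error equations \eqref{eq_yh_yhu}--\eqref{eq_zh_zhu} with $(\zeta_{\bm p},-\zeta_z,-\zeta_{\widehat z})$ and $(-\zeta_{\bm q},\zeta_y,\zeta_{\widehat y})$, cancel via \Cref{identical_equa}, use \textbf{(A2)} to reduce the coefficient to $h^{-1}+\tau_2$, use \textbf{(A1)} and the $P_M$ property to replace $P_Mu-u_h$ by $u-u_h$, and regroup into the two boundary brackets. You merely perform the regrouping at the end rather than the start, and your extra observation that the second bracket vanishes by \eqref{HDG_full_discrete_e} is also what the paper exploits in the subsequent theorem, so nothing is missing.
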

\begin{proof}
	First, we have
	\begin{multline*}
	\langle \gamma u+\bm p_h(u)\cdot\bm n +h^{-1}  z_h(u) + \tau_2 z_h(u),u-u_h\rangle_{\varepsilon_h^\partial} - \langle \gamma u_h+\bm p_h\cdot\bm n + h^{-1}  z_h+\tau_2 z_h,u-u_h\rangle_{\varepsilon_h^\partial}\\
	= \gamma\norm{u-u_h}_{\varepsilon_h^\partial}^2 +\langle \zeta_{\bm p}\cdot\bm n+h^{-1}  \zeta_z +\tau_2 \zeta_z,u-u_h\rangle_{\varepsilon_h^\partial}.
	\end{multline*}
	Next, \Cref{identical_equa} gives
	\begin{align*}
	\mathscr B_1 &(\zeta_{\bm q},\zeta_y,\zeta_{\widehat{y}};\zeta_{\bm p},-\zeta_{z},-\zeta_{\widehat z}) + \mathscr B_2(\zeta_{\bm p},\zeta_z,\zeta_{\widehat z};-\zeta_{\bm q},\zeta_y,\zeta_{\widehat{y}})  = 0.
	\end{align*}
	On the other hand, since $\tau_2$ is piecewise constant on $\partial \mathcal T_h$, we have
	\begin{align*}
	\mathscr B_1(\zeta_{\bm q},\zeta_y,\zeta_{\widehat y};&\zeta_{\bm p}, -\zeta_{z},-\zeta_{\widehat{ z}}) + \mathscr B_2(\zeta_{\bm p},\zeta_z,\zeta_{\widehat z}; -\zeta_{\bm q}, \zeta_{y},\zeta_{\widehat{y}})\\
	&=  (\zeta_{ y},\zeta_{ y})_{\mathcal{T}_h} - \langle P_Mu-u_h, \zeta_{\bm p}\cdot \bm{n} + (h^{-1} +\tau_1-\bm{\beta} \cdot\bm n)\zeta_z \rangle_{{\varepsilon_h^{\partial}}}\\
	&= (\zeta_{ y},\zeta_{ y})_{\mathcal{T}_h} -\langle P_Mu-u_h, \zeta_{\bm p}\cdot \bm{n} + h^{-1}\zeta_z +\tau_2 \zeta_z  \rangle_{{\varepsilon_h^{\partial}}}\\
	&= (\zeta_{ y},\zeta_{ y})_{\mathcal{T}_h} -\langle u-u_h, \zeta_{\bm p}\cdot \bm{n} + h^{-1}\zeta_z +\tau_2 \zeta_z  \rangle_{{\varepsilon_h^{\partial}}}.
	\end{align*}
	Comparing the above two equalities gives
	\begin{align*}
	(\zeta_{ y},\zeta_{ y})_{\mathcal{T}_h} = \langle u-u_h, \zeta_{\bm p}\cdot \bm{n} + h^{-1}\zeta_z +\tau_2 \zeta_z  \rangle_{{\varepsilon_h^{\partial}}}.
	\end{align*}
\end{proof}

\begin{theorem}
	We have
	\begin{align*}
	\norm{u-u_h}_{\varepsilon_h^\partial}&\lesssim h^{s_{\bm p}-\frac 1 2}\norm{\bm p}_{s_{\bm p},\Omega} +  h^{s_{z}-\frac 3 2}\norm{z}_{s_{z},\Omega} + h^{s_{\bm q}+\frac 1 2}\norm{\bm q}_{s_{\bm q},\Omega} + h^{s_{y}-\frac 12 }\norm{y}_{s_{y},\Omega},\\
	\norm {y-y_h}_{\mathcal T_h} &\lesssim h^{s_{\bm p}-\frac 1 2}\norm{\bm p}_{s_{\bm p},\Omega} +  h^{s_{z}-\frac 3 2}\norm{z}_{s_{z},\Omega} + h^{s_{\bm q}+\frac 1 2}\norm{\bm q}_{s_{\bm q},\Omega} + h^{s_{y}-\frac 12 }\norm{y}_{s_{y},\Omega}.
	\end{align*}
\end{theorem}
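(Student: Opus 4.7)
The plan is to start from the identity
\[
\gamma\norm{u-u_h}_{\varepsilon_h^\partial}^2 + \norm{\zeta_y}_{\mathcal T_h}^2 = I - II
\]
supplied by the previous lemma, where $I$ contains the continuous residual and $II$ the discrete one, and to show that $II$ vanishes outright while $I$ is controlled by the target quantities times $\norm{u-u_h}_{\varepsilon_h^\partial}$. For $II$, assumption \textbf{(A1)} makes $\tau_2$ piecewise constant, so $\bm p_h\cdot\bm n + (h^{-1}+\tau_2)z_h$ restricted to any boundary face is a polynomial of degree at most $k+1$ and hence lies in $M_h(\partial)$. By the numerical trace definition \eqref{HDG_discrete2_k} it coincides with $\widehat{\bm p}_h\cdot\bm n$ on $\varepsilon_h^\partial$, and the discrete optimality equation \eqref{HDG_full_discrete_e} then forces $\gamma u_h + \widehat{\bm p}_h\cdot\bm n \equiv 0$ on $\varepsilon_h^\partial$, killing $II$.

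For $I$, I would apply the continuous optimality condition \eqref{mixed_e}, $\gamma u + \bar{\bm p}\cdot\bm n=0$ on $\Gamma$, together with $\bar z=0$ on $\Gamma$, to reduce $I$ to
\[
\langle (\bm p_h(u)-\bm p)\cdot\bm n + (h^{-1}+\tau_2)(z_h(u)-z),\, u-u_h\rangle_{\varepsilon_h^\partial}.
\]
Splitting $\bm p_h(u)-\bm p = -\delta^{\bm p}-\varepsilon_h^{\bm p}$ and $z_h(u)-z = -\delta^z-\varepsilon_h^z$ and applying Cauchy--Schwarz reduces the task to four boundary norms. The projection pieces $\norm{\delta^{\bm p}\cdot\bm n}_{\varepsilon_h^\partial}$ and $h^{-1}\norm{\delta^z}_{\varepsilon_h^\partial}$ come directly from \eqref{classical_ine} and produce the expected $O(h^{s_{\bm p}-1/2})\norm{\bm p}_{s_{\bm p},\Omega}$ and $O(h^{s_z-3/2})\norm{z}_{s_z,\Omega}$. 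For $\norm{\varepsilon_h^{\bm p}\cdot\bm n}_{\varepsilon_h^\partial}$ I would use the discrete inverse trace inequality $\norm{\varepsilon_h^{\bm p}}_{\partial\mathcal T_h}\lesssim h^{-1/2}\norm{\varepsilon_h^{\bm p}}_{\mathcal T_h}$ combined with \Cref{lemma:step5_main_lemma}.

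The main obstacle is the term $h^{-1}\norm{\varepsilon_h^z}_{\varepsilon_h^\partial}$: a naive inverse trace inequality together with the $h^{-1}$ prefactor would cost an extra half power of $h$ and miss the target rate. The key observation is that $\varepsilon_h^{\widehat z}=0$ on $\varepsilon_h^\partial$, because $P_M z=0$ there ($z=0$ on $\Gamma$) and $\widehat z_h^o(u)$ is defined only on interior faces. Consequently $\norm{\varepsilon_h^z}_{\varepsilon_h^\partial}=\norm{\varepsilon_h^z-\varepsilon_h^{\widehat z}}_{\varepsilon_h^\partial}\leq \norm{\varepsilon_h^z-\varepsilon_h^{\widehat z}}_{\partial\mathcal T_h}$, and the latter jump quantity is precisely the one estimated in \Cref{lemma:step5_main_lemma} at an effective rate a half power of $h$ better than the bulk $\varepsilon_h^{\bm p}$ estimate; after multiplication by $h^{-1}$ this delivers exactly the desired $O(h^{s_z-3/2})$ contribution together with the matching $s_{\bm p},s_{\bm q},s_y$ terms. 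A Young's inequality then absorbs $\norm{u-u_h}_{\varepsilon_h^\partial}$ on the left and simultaneously bounds $\norm{u-u_h}_{\varepsilon_h^\partial}$ and $\norm{\zeta_y}_{\mathcal T_h}$. The estimate for $\norm{y-y_h}_{\mathcal T_h}$ follows from the triangle inequality $\norm{y-y_h}_{\mathcal T_h}\leq \norm{y-y_h(u)}_{\mathcal T_h}+\norm{\zeta_y}_{\mathcal T_h}$, using \Cref{lemma:step3_conv_rates} for the first piece, whose contributions are already dominated by the terms appearing in the bound for $\norm{\zeta_y}_{\mathcal T_h}$.
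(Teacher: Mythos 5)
Your proposal is correct and follows essentially the same route as the paper's own proof: the same identity from the preceding lemma, the same use of the continuous optimality condition and the pointwise vanishing of the discrete residual on $\varepsilon_h^\partial$, the same combination of boundary projection estimates, the inverse trace inequality for $\varepsilon_h^{\bm p}$, and the jump estimate of \Cref{lemma:step5_main_lemma} to recover the half power of $h$, followed by the triangle inequality with \Cref{lemma:step3_conv_rates} for $\norm{y-y_h}_{\mathcal T_h}$. Your explicit justification that $\gamma u_h+\bm p_h\cdot\bm n+(h^{-1}+\tau_2)z_h$ lies in $M_h(\partial)$ under \textbf{(A1)} and hence vanishes identically, and your careful handling of the $\delta^z$ boundary contribution, merely spell out details the paper leaves implicit.
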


\begin{proof}
	%
	Since $\gamma u+\bm p \cdot\bm n=0$ on $\varepsilon_h^{\partial}$ and $\gamma u_h+\bm p_h\cdot\bm n + h^{-1} z_h +\tau_2 z_h=0$ on $\varepsilon_h^{\partial}$ we have
	\begin{align*}
	\gamma\norm{u-u_h}_{\varepsilon_h^\partial}^2  + \norm {\zeta_y}_{\mathcal T_h}^2 &= \langle \gamma u+\bm p_h(u)\cdot\bm n + h^{-1} z_h(u) + \tau_2 z_h(u),u-u_h\rangle_{\varepsilon_h^\partial}\\
	&=\langle (\bm p_h(u)-\bm p)\cdot\bm n + h^{-1} z_h(u)+ \tau_2 z_h(u) ,u-u_h\rangle_{\varepsilon_h^\partial}.
	\end{align*}
	Next, since $\widehat z_h(u) = z=0$ on $\varepsilon_h^{\partial}$ we have
	\begin{align*}
	\norm {\bm p_h(u)-\bm p}_{\partial \mathcal T_h} &\le \norm {\bm p_h(u)-\bm{\Pi}\bm p}_{\partial \mathcal T_h} +\norm {\bm{\Pi}\bm p - \bm p}_{\partial \mathcal T_h}\\
	&\lesssim h^{-\frac 1 2}\norm {\varepsilon_h^{\bm p}}_{\mathcal T_h} +h^{s_{\bm p}-\frac 12 } \norm{\bm p}_{s^{\bm p},\Omega},\\
	\|z_h(u)\|_{\varepsilon_h^\partial} &=\|z_h(u) -\Pi z +P_M z-\widehat z_h(u) \|_{\varepsilon_h^\partial} =  \|\varepsilon_h^z -\varepsilon_h^{\widehat z}\|_{\partial\mathcal T_h}.
	\end{align*}
	Some simple manipulations  gives
	\begin{align*}
	\norm{u-u_h}_{\varepsilon_h^\partial}  + \|\zeta_y\|_{\mathcal T_h}&\lesssim h^{-\frac 1 2}\norm {\varepsilon_h^{\bm p}}_{\mathcal T_h} +h^{s_{\bm p}-\frac 12 } \norm{\bm p}_{s^{\bm p},\Omega} +h^{-1}\|\varepsilon_h^z -\varepsilon_h^{\widehat z}\|_{\partial\mathcal T_h}.
	\end{align*}
	By \Cref{lemma:step5_main_lemma} and properties of the $ L^2 $ projection, we have
	\begin{align*}
	\hspace{3em}&\hspace{-3em} \norm{u-u_h}_{\varepsilon_h^\partial}  + \|\zeta_y\|_{\mathcal T_h}\\
	&\lesssim h^{s_{\bm p}-\frac 1 2}\norm{\bm p}_{s^{\bm p},\Omega} + h^{s_{z}-\frac 3 2}\norm{z}_{s^{z},\Omega} +h^{s_{\bm q}+\frac 1 2}\norm{\bm q}_{s^{\bm q},\Omega} + h^{s_{y}-\frac 1 2}\norm{y}_{s^{y},\Omega}.
	\end{align*}
	Then, by the triangle inequality and \Cref{lemma:step3_conv_rates} we obtain
	\begin{align*}
	\|y -y_h\|_{\mathcal T_h}\lesssim h^{s_{\bm p}-\frac 1 2}\norm{\bm p}_{s^{\bm p},\Omega} + h^{s_{z}-\frac 3 2}\norm{z}_{s^{z},\Omega} +h^{s_{\bm q}+\frac 1 2}\norm{\bm q}_{s^{\bm q},\Omega} + h^{s_{y}-\frac 1 2}\norm{y}_{s^{y},\Omega}.
	\end{align*}
\end{proof}

\subsubsection{Step 7: Estimates for $\|\boldmath p-\boldmath p_h\|_{\mathcal T_h}$,  $\|z-z_h\|_{\mathcal T_h}$ and $\|\boldmath q - \boldmath q_h\|_{\mathcal T_h}$ .}
\begin{lemma}
	We have
	\begin{align*}
	\norm {\zeta_{\bm p}}_{\mathcal T_h}  &\lesssim h^{s_{\bm p}-\frac 1 2}\norm{\bm p}_{s_{\bm p},\Omega} +  h^{s_{z}-\frac 3 2}\norm{z}_{s_{z},\Omega} + h^{s_{\bm q}+\frac 1 2}\norm{\bm q}_{s_{\bm q},\Omega} + h^{s_{y}-\frac 12 }\norm{y}_{s_{y},\Omega},\\
	\|\zeta_z\|_{\mathcal T_h} & \lesssim  h^{s_{\bm p}-\frac 1 2}\norm{\bm p}_{s_{\bm p},\Omega} +  h^{s_{z}-\frac 3 2}\norm{z}_{s_{z},\Omega} + h^{s_{\bm q}+\frac 1 2}\norm{\bm q}_{s_{\bm q},\Omega} + h^{s_{y}-\frac 12 }\norm{y}_{s_{y},\Omega}.
	\end{align*}
\end{lemma}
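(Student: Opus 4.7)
The plan is to mirror the argument of Step 5 (the proof of \Cref{lemma:step5_main_lemma}), but using the error equation \eqref{eq_zh_zhu} with right hand side $(\zeta_y,w_2)_{\mathcal T_h}$ instead of the projection error terms. The source is already known to be small by the bound on $\norm{\zeta_y}_{\mathcal T_h}$ proved in Step 6, so no new ingredients beyond the two core HDG inequalities (\Cref{nabla_ine} and \Cref{lemma:discr_Poincare_ineq}) should be needed.

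First I would test the second error equation with $(\bm r_2,w_2,\mu_2)=(\zeta_{\bm p},\zeta_z,\zeta_{\widehat z})$. Note that $\zeta_{\widehat z}=0$ on $\varepsilon_h^{\partial}$ because both $\widehat z_h(u)$ and $\widehat z_h$ vanish there, so the basic identity in \Cref{property_B} applies directly and gives
\[
\norm{\zeta_{\bm p}}_{\mathcal T_h}^2 + \bigl\|(h^{-1}+\tau_2+\tfrac12\bm\beta\cdot\bm n)^{1/2}(\zeta_z-\zeta_{\widehat z})\bigr\|_{\partial\mathcal T_h}^2 + (\text{positive terms from }\nabla\cdot\bm\beta\le 0 \text{ and (A3)}) = (\zeta_y,\zeta_z)_{\mathcal T_h}.
\]
Under assumption (A3), the left side dominates $\norm{\zeta_{\bm p}}_{\mathcal T_h}^2 + c\,h^{-1}\norm{\zeta_z-\zeta_{\widehat z}}_{\partial\mathcal T_h}^2$.

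Next I would control $\norm{\zeta_z}_{\mathcal T_h}$ on the right. Because $(\zeta_{\bm p},\zeta_z,\zeta_{\widehat z})$ arises by subtracting two HDG solutions, its first component satisfies the same discrete gradient relation as $(\varepsilon_h^{\bm p},\varepsilon_h^z,\varepsilon_h^{\widehat z})$, so the analogue of \Cref{nabla_ine} (used in \eqref{nabla_z} of Step 5) gives $\norm{\nabla\zeta_z}_{\mathcal T_h}\le \norm{\zeta_{\bm p}}_{\mathcal T_h}+Ch^{-1/2}\norm{\zeta_z-\zeta_{\widehat z}}_{\partial\mathcal T_h}$, and then \Cref{lemma:discr_Poincare_ineq} yields
\[
\norm{\zeta_z}_{\mathcal T_h} \le C\bigl(\norm{\zeta_{\bm p}}_{\mathcal T_h} + h^{-1/2}\norm{\zeta_z-\zeta_{\widehat z}}_{\partial\mathcal T_h}\bigr).
\]
Substituting this into $(\zeta_y,\zeta_z)_{\mathcal T_h}\le \norm{\zeta_y}_{\mathcal T_h}\norm{\zeta_z}_{\mathcal T_h}$ and applying Young's inequality to absorb $\norm{\zeta_{\bm p}}_{\mathcal T_h}^2$ and $h^{-1}\norm{\zeta_z-\zeta_{\widehat z}}_{\partial\mathcal T_h}^2$ into the left side leaves
\[
\norm{\zeta_{\bm p}}_{\mathcal T_h} + h^{-1/2}\norm{\zeta_z-\zeta_{\widehat z}}_{\partial\mathcal T_h} \lesssim \norm{\zeta_y}_{\mathcal T_h}.
\]

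Finally, a second use of \Cref{lemma:discr_Poincare_ineq} promotes this to $\norm{\zeta_z}_{\mathcal T_h}\lesssim \norm{\zeta_y}_{\mathcal T_h}$. The bound on $\norm{\zeta_y}_{\mathcal T_h}$ that was obtained inside the proof of the preceding theorem in Step 6 (before the final triangle inequality step used to get $\norm{y-y_h}_{\mathcal T_h}$) matches the right hand side in the lemma statement, so plugging it in completes the proof. The main (minor) technical point is verifying that the discrete gradient inequality of \cite{MR3440284} transfers from $(\varepsilon_h^{\bm p},\varepsilon_h^z,\varepsilon_h^{\widehat z})$ to $(\zeta_{\bm p},\zeta_z,\zeta_{\widehat z})$; this follows because the defining relation used in that lemma is exactly the linear part of $\mathscr B_2$ that survives the subtraction of the auxiliary and fully discrete problems.
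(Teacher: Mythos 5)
Your proposal is correct and follows essentially the same route as the paper: test \eqref{eq_zh_zhu} with $(\zeta_{\bm p},\zeta_z,\zeta_{\widehat z})$, use \Cref{property_B} (with $\zeta_{\widehat z}=0$ on $\varepsilon_h^{\partial}$), bound $(\zeta_y,\zeta_z)_{\mathcal T_h}$ via the discrete Poincar\'e inequality and the $\zeta$-version of \Cref{nabla_ine}, absorb, and invoke the Step~6 bound on $\norm{\zeta_y}_{\mathcal T_h}$. The only cosmetic difference is your explicit mention of Young's inequality and of verifying the transfer of \Cref{nabla_ine} to the $\zeta$ variables, both of which the paper uses implicitly.
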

\begin{proof}
	By \Cref{property_B}, the error equation \eqref{eq_zh_zhu}, and since $\zeta_{ \widehat z} = 0$ on $\varepsilon_h^{\partial}$, we have
	\begin{align*}
	\hspace{2em}&\hspace{-2em}\mathscr B_2(\zeta_{\bm p},\zeta_z,\zeta_{\widehat z};\zeta_{\bm p},\zeta_z,\zeta_{\widehat z})\\
	&=(\zeta_{\bm p}, \zeta_{\bm p})_{{\mathcal{T}_h}}+\langle (h^{-1}+\tau_2+\frac 12 \bm{\beta}\cdot\bm n) (\zeta_z-\zeta_{\widehat z}) , \zeta_z-\zeta_{\widehat z}\rangle_{\partial{{\mathcal{T}_h}}}\\
	&=(\zeta_y,\zeta_z)_{\mathcal T_h}\\
	&\le \norm{\zeta_y}_{\mathcal T_h} \norm{\zeta_z}_{\mathcal T_h}\\
	&\lesssim \norm{\zeta_y}_{\mathcal T_h} (\|\nabla \zeta_z\|_{\mathcal T_h} + h^{-\frac 1 2} \|\zeta_z - \zeta_{\widehat z}\|_{\partial\mathcal T_h}) \\
	& \lesssim \norm{\zeta_y}_{\mathcal T_h}
	(\|\zeta_{\bm p}\|_{\mathcal T_h}+h^{-\frac1 2}\|\zeta_z-\zeta_{\widehat z}\|_{\partial\mathcal T_h}),
	\end{align*}
	where we used the discrete Poincar{\'e} inequality in \Cref{lemma:discr_Poincare_ineq} and also \Cref{nabla_ine}.  This implies
	\begin{align*}
	\hspace{3em}&\hspace{-3em} \norm {\zeta_{\bm p}}_{\mathcal T_h} +h^{-\frac1 2}\|\zeta_z-\zeta_{\widehat z}\|_{\partial\mathcal T_h}\\
	& \lesssim h^{s_{\bm p}-\frac 1 2}\norm{\bm p}_{s_{\bm p},\Omega} +  h^{s_{z}-\frac 3 2}\norm{z}_{s_{z},\Omega} + h^{s_{\bm q}+\frac 1 2}\norm{\bm q}_{s_{\bm q},\Omega} + h^{s_{y}-\frac 12 }\norm{y}_{s_{y},\Omega}.
	\end{align*}
	
	The discrete Poincar{\'e} inequality in \Cref{lemma:discr_Poincare_ineq} also gives
	\begin{align*}
	\|\zeta_z\|_{\mathcal T_h} & \lesssim \|\nabla \zeta_z\|_{\mathcal T_h} + h^{-\frac 1 2} \|\zeta_z - \zeta_{\widehat z}\|_{\partial\mathcal T_h}\\
	&\lesssim h^{s_{\bm p}-\frac 1 2}\norm{\bm p}_{s_{\bm p},\Omega} +  h^{s_{z}-\frac 3 2}\norm{z}_{s_{z},\Omega} + h^{s_{\bm q}+\frac 1 2}\norm{\bm q}_{s_{\bm q},\Omega} + h^{s_{y}-\frac 12 }\norm{y}_{s_{y},\Omega}.
	\end{align*}
\end{proof}

\begin{lemma}
	If \textbf{(A1)}  and $k\geq 1$ hold, then
	\begin{align*}
	\norm {\zeta_{\bm q}}_{\mathcal T_h}  &\lesssim h^{s_{\bm p}-1}\norm{\bm p}_{s_{\bm p},\Omega} +  h^{s_{z}-2}\norm{z}_{s_{z},\Omega} + h^{s_{\bm q}}\norm{\bm q}_{s_{\bm q},\Omega} + h^{s_{y}-1 }\norm{y}_{s_{y},\Omega}.
	\end{align*}
\end{lemma}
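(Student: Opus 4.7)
The plan is to run the usual HDG energy argument on the error equation \eqref{eq_yh_yhu}. I would test \eqref{eq_yh_yhu} with the coercive choice $(\bm r_1,w_1,\mu_1) = (\zeta_{\bm q},\zeta_y,\zeta_{\widehat y}|_{\varepsilon_h^o})$ and invoke the identity of \Cref{property_B}. Using $\nabla\cdot\bm\beta\le 0$ together with the stabilization bound \eqref{eqn:tau1_condition}, the left-hand side dominates
\[
\|\zeta_{\bm q}\|_{\mathcal T_h}^{2}+c\,h^{-1}\|\zeta_y-\zeta_{\widehat y}\|_{\partial\mathcal T_h\setminus\varepsilon_h^\partial}^{2}+c\,h^{-1}\|\zeta_y\|_{\varepsilon_h^\partial}^{2},
\]
while the right-hand side reduces to $-\langle P_M u-u_h,\ \zeta_{\bm q}\cdot\bm n+(\bm\beta\cdot\bm n-h^{-1}-\tau_1)\zeta_y\rangle_{\varepsilon_h^\partial}$, exactly as on the right of \eqref{eq_yh_yhu}.

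Next I would estimate this surface integral. For the flux trace, the standard discrete trace inequality $\|\zeta_{\bm q}\cdot\bm n\|_{\varepsilon_h^\partial}\lesssim h^{-1/2}\|\zeta_{\bm q}\|_{\mathcal T_h}$ combined with Young's inequality gives $|\langle P_M u-u_h,\zeta_{\bm q}\cdot\bm n\rangle_{\varepsilon_h^\partial}|\le \tfrac14\|\zeta_{\bm q}\|_{\mathcal T_h}^{2}+C\,h^{-1}\|P_M u-u_h\|_{\varepsilon_h^\partial}^{2}$. For the $h^{-1}$-scaled pairing against $\zeta_y$, Young's inequality with a small parameter produces $h^{-1}|\langle P_M u-u_h,\zeta_y\rangle_{\varepsilon_h^\partial}|\lesssim h^{-1}\|P_M u-u_h\|_{\varepsilon_h^\partial}^{2}+\epsilon\,h^{-1}\|\zeta_y\|_{\varepsilon_h^\partial}^{2}$, whose final term is absorbed by the $h^{-1}\|\zeta_y\|_{\varepsilon_h^\partial}^{2}$ produced by \Cref{property_B}; the bounded contribution from $\bm\beta\cdot\bm n$ is subordinate. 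The result is $\|\zeta_{\bm q}\|_{\mathcal T_h}\lesssim h^{-1/2}\|P_M u-u_h\|_{\varepsilon_h^\partial}$.

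To finish, I would split $\|P_M u-u_h\|_{\varepsilon_h^\partial}\le\|u-u_h\|_{\varepsilon_h^\partial}+\|u-P_M u\|_{\varepsilon_h^\partial}$. Inserting the Step~6 bound for $\|u-u_h\|_{\varepsilon_h^\partial}$ and multiplying by $h^{-1/2}$ converts each rate exactly into the four announced terms $h^{s_{\bm p}-1}\|\bm p\|_{s_{\bm p},\Omega}$, $h^{s_z-2}\|z\|_{s_z,\Omega}$, $h^{s_{\bm q}}\|\bm q\|_{s_{\bm q},\Omega}$, $h^{s_y-1}\|y\|_{s_y,\Omega}$. For the projection remainder, the continuous optimality condition \eqref{mixed_e} gives $u=-\gamma^{-1}\bm p\cdot\bm n$, so $u\in H^{s_{\bm p}-1/2}(\Gamma)$ with norm controlled by $\|\bm p\|_{s_{\bm p},\Omega}$; a standard edgewise $L^2$-approximation estimate for $P_M$ on $M_h$ then produces $\|u-P_M u\|_{\varepsilon_h^\partial}\lesssim h^{s_{\bm p}-1/2}\|\bm p\|_{s_{\bm p},\Omega}$, which after the $h^{-1/2}$ weight contributes $h^{s_{\bm p}-1}\|\bm p\|_{s_{\bm p},\Omega}$, consistent with the stated bound.

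The main (and essentially only) obstacle is the $h^{-1}$ scaling of the $\zeta_y$ pairing in the right-hand side: absorbing it requires a matching $h^{-1}$ weight on the boundary norm of $\zeta_y$, which is precisely what the new HDG numerical trace \eqref{HDG_discrete2_i} bakes into the coercivity identity of \Cref{property_B}. Without that built-in $h^{-1}$ penalization the absorption would cost an extra factor of $h^{-1/2}$ and the stated rate for $\zeta_{\bm q}$ would lose half a power. The restriction $k\geq 1$ is used through the Step~6 estimate it is combined with; no further restriction beyond \textbf{(A1)}--\textbf{(A2)} and \eqref{eqn:tau1_condition} is introduced in the argument above.
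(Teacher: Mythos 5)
Your argument is correct and follows the same overall strategy as the paper: test the error equation \eqref{eq_yh_yhu} with $(\zeta_{\bm q},\zeta_y,\zeta_{\widehat y})$, use the coercivity identity of \Cref{property_B} together with \eqref{eqn:tau1_condition} and $\nabla\cdot\bm\beta\le 0$ to retain $\norm{\zeta_{\bm q}}_{\mathcal T_h}^2$ and the $h^{-1}$-weighted boundary terms, absorb the right-hand side, and conclude with the Step~6 bound for $\norm{u-u_h}_{\varepsilon_h^\partial}$ after paying $h^{-1/2}$. The one genuine divergence is the treatment of the factor $P_M u-u_h$: the paper uses \textbf{(A1)} (and \textbf{(A2)}) to observe that $\zeta_{\bm q}\cdot\bm n-(h^{-1}+\tau_2)\zeta_y$ is a polynomial of degree $k+1$ on each boundary face, so the $L^2$-orthogonality of $P_M$ lets it replace $P_M u$ by $u$ exactly and arrive at $\norm{\zeta_{\bm q}}_{\mathcal T_h}\lesssim h^{-1/2}\norm{u-u_h}_{\varepsilon_h^\partial}$ with no extra terms; you instead keep $P_M u-u_h$, split off $\norm{u-P_Mu}_{\varepsilon_h^\partial}$, and control it through $u=-\gamma^{-1}\bm p\cdot\bm n$, edgewise trace regularity, and the approximation property of $P_M$, which costs an additional $h^{s_{\bm p}-1}\norm{\bm p}_{s_{\bm p},\Omega}$ that is already present in the stated bound. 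Both routes give the lemma; the paper's is cleaner and pinpoints exactly where the piecewise-constant $\tau_2$ and the enriched degree-$(k+1)$ trace space are exploited, while yours is more robust in that it does not need that orthogonality at this step (though \textbf{(A1)}--\textbf{(A2)} still enter through Step~6). One small correction: $k\ge 1$ is not actually used in the mechanics of either proof (Step~6 holds for $k\ge 0$); it is only there so that the resulting rates, e.g.\ $h^{s_z-2}$, are positive and the estimate is meaningful.
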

\begin{proof}
	By \Cref{property_B}, the error equation \eqref{eq_yh_yhu}, and since $ \tau_2 $ is piecewise constant on $ \partial \mathcal{T}_h $, we have
	\begin{align*}
	\hspace{2em}&\hspace{-2em}\mathscr B_1(\zeta_{\bm q},\zeta_y,\zeta_{\widehat y};\zeta_{\bm q},\zeta_y,\zeta_{\widehat y}) \\
	&=(\zeta_{\bm q}, \zeta_{\bm q})_{{\mathcal{T}_h}}+\langle (h^{-1}+\tau_1 - \frac 12 \bm \beta \cdot\bm n) (\zeta_y-\zeta_{\widehat y}) , \zeta_y-\zeta_{\widehat y}\rangle_{\partial{{\mathcal{T}_h}}\backslash\varepsilon_h^\partial} - (\nabla\cdot\bm{\beta} \zeta_y,\zeta_y)_{\mathcal T_h}\\
	&\quad+ \langle (h^{-1}+\tau_1 - \frac 12 \bm \beta \cdot\bm n) \zeta_y, \zeta_y \rangle_{\varepsilon_h^\partial} \\
	&= -\langle P_M u-u_h, \zeta_{\bm q}\cdot \bm{n} + (\bm{\beta}\cdot\bm n-h^{-1}-\tau_1) \zeta_y \rangle_{{\varepsilon_h^{\partial}}}\\
	&= -\langle P_Mu-u_h, \zeta_{\bm q}\cdot \bm{n} - (h^{-1}+\tau_2) \zeta_y \rangle_{{\varepsilon_h^{\partial}}}\\
	&=-\langle u-u_h, \zeta_{\bm q}\cdot \bm{n} - (h^{-1}+\tau_2) \zeta_y \rangle_{{\varepsilon_h^{\partial}}}\\
	&	\lesssim \norm {u-u_h}_{\varepsilon_h^{\partial}} (\norm {\zeta_{\bm q}}_{\varepsilon_h^{\partial}} + h^{-1} \norm {\zeta_{y}}_{\varepsilon_h^{\partial}} )\\
	&	\lesssim h^{-\frac 1 2}\norm {u-u_h}_{\varepsilon_h^{\partial}} (\norm {\zeta_{\bm q}}_{\mathcal T_h} + h^{-\frac 1 2} \norm {\zeta_{y}}_{\varepsilon_h^{\partial}}),
	\end{align*}
	which gives
	\begin{align*}
	\norm {\zeta_{\bm q}}_{\mathcal T_h} &\lesssim h^{-\frac 1 2}\norm {u-u_h}_{\varepsilon_h^{\partial}} \\
	&\lesssim h^{s_{\bm p}-1}\norm{\bm p}_{s_{\bm p},\Omega} +  h^{s_{z}-2}\norm{z}_{s_{z},\Omega} + h^{s_{\bm q}}\norm{\bm q}_{s_{\bm q},\Omega} + h^{s_{y}-1 }\norm{y}_{s_{y},\Omega}.
	\end{align*}
\end{proof}

The above lemma along with the triangle inequality, \Cref{lemma:step3_conv_rates}, and \Cref{lemma:step6_conv_rates} complete the proof of the main result:
\begin{thm}
	We have
	\begin{align*}
	\norm {\bm p - \bm p_h}_{\mathcal T_h}   &\lesssim h^{s_{\bm p}-\frac 1 2}\norm{\bm p}_{s_{\bm p},\Omega} +  h^{s_{z}-\frac 3 2}\norm{z}_{s_{z},\Omega} + h^{s_{\bm q}+\frac 1 2}\norm{\bm q}_{s_{\bm q},\Omega} + h^{s_{y}-\frac 12 }\norm{y}_{s_{y},\Omega},\\
	\norm {z - z_h}_{\mathcal T_h}   & \lesssim  h^{s_{\bm p}-\frac 1 2}\norm{\bm p}_{s_{\bm p},\Omega} +  h^{s_{z}-\frac 3 2}\norm{z}_{s_{z},\Omega} + h^{s_{\bm q}+\frac 1 2}\norm{\bm q}_{s_{\bm q},\Omega} + h^{s_{y}-\frac 12 }\norm{y}_{s_{y},\Omega}.
	\end{align*}
	If in addition $ k \geq 1 $, then
	\begin{align*}
	\norm {\bm q - \bm q_h}_{\mathcal T_h} &\lesssim h^{s_{\bm p}-1}\norm{\bm p}_{s_{\bm p},\Omega} +  h^{s_{z}-2}\norm{z}_{s_{z},\Omega} + h^{s_{\bm q}}\norm{\bm q}_{s_{\bm q},\Omega} + h^{s_{y}-1 }\norm{y}_{s_{y},\Omega}.
	\end{align*}
\end{thm}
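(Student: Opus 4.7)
The final theorem is an immediate consequence of the triangle inequality splitting
\[
\bm p - \bm p_h = (\bm p - \bm p_h(u)) + \zeta_{\bm p},\quad z - z_h = (z - z_h(u)) + \zeta_z,\quad \bm q - \bm q_h = (\bm q - \bm q_h(u)) + \zeta_{\bm q},
\]
together with the two families of bounds already assembled in Steps 3, 5--6, and 7. The plan is therefore to show that in each splitting, the $\zeta$-term (distance between the auxiliary problem and the fully coupled HDG problem) dominates the $\bm p_h(u)$-, $z_h(u)$-, or $\bm q_h(u)$-term (distance between the exact solution and the auxiliary problem), so that the stated bound is simply the bound on $\zeta$.

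Concretely, first I would write $\norm{\bm p - \bm p_h}_{\mathcal T_h} \leq \norm{\bm p - \bm p_h(u)}_{\mathcal T_h} + \norm{\zeta_{\bm p}}_{\mathcal T_h}$. Invoking \Cref{lemma:step6_conv_rates} controls the first term by $h^{s_{\bm p}}\norm{\bm p}_{s_{\bm p},\Omega} + h^{s_z - 1}\norm{z}_{s_z,\Omega} + h^{s_{\bm q}+1}\norm{\bm q}_{s_{\bm q},\Omega} + h^{s_y}\norm{y}_{s_y,\Omega}$, while the preceding lemma on $\zeta_{\bm p}$ controls the second term by $h^{s_{\bm p}-1/2}\norm{\bm p}_{s_{\bm p},\Omega} + h^{s_z - 3/2}\norm{z}_{s_z,\Omega} + h^{s_{\bm q}+1/2}\norm{\bm q}_{s_{\bm q},\Omega} + h^{s_y-1/2}\norm{y}_{s_y,\Omega}$. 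Since the exponents in the $\zeta$-bound are each exactly $1/2$ smaller, the $\zeta$-bound dominates (for $h$ bounded) and gives precisely the stated estimate. The argument for $\norm{z-z_h}_{\mathcal T_h}$ is identical, using the $\zeta_z$ bound from the same lemma and the $z_h(u)$ bound from \Cref{lemma:step6_conv_rates}.

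For $\norm{\bm q - \bm q_h}_{\mathcal T_h}$ under the extra hypothesis $k\geq 1$, I would analogously split and use \Cref{lemma:step3_conv_rates} (first term) together with the $\zeta_{\bm q}$ bound (second term), which again gives a slower rate (by $h^{1/2}$ in each term relative to the $\bm q_h(u)$ bound) that governs the sum. There is essentially no obstacle here: the real work is in the earlier lemmas, which have already established the delicate HDG error equations, duality argument, energy identities, and the crucial identity of \Cref{identical_equa} that couples the state and dual-state error equations. The present theorem is the bookkeeping step that packages those bounds into a final statement; the only item worth double-checking is the matching of exponents to confirm that in each of the three bounds the $\zeta$-contribution is indeed the dominant one.
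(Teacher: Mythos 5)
Your proposal is correct and follows exactly the paper's argument: the paper also proves this theorem by the triangle inequality, combining \Cref{lemma:step3_conv_rates} and \Cref{lemma:step6_conv_rates} (the auxiliary-problem estimates) with the Step~7 bounds on $\zeta_{\bm p}$, $\zeta_z$, and $\zeta_{\bm q}$. The only slight imprecision is your remark that for $\bm q$ the $\zeta$-bound is slower by $h^{1/2}$ in each term; in fact the $\bm q$- and $y$-exponents in the $\zeta_{\bm q}$ bound coincide with those in \Cref{lemma:step3_conv_rates}, but the combined estimate is still exactly the stated one, so the conclusion is unaffected.
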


  \section{Numerical Experiments}
\label{sec:numerics}

We present numerical results for a 2D example problem on a square domain $\Omega = [0,1/8]\times [0,1/8] $.  For the results presented below, we chose $ \tau_1 = \tau_2 = 1 $ for the stabilization functions.  In other numerical experiments not reported here, we also chose $ \tau_1 $ and $ \tau_2 $ to satisfy the conditions \textbf{(A1)}-\textbf{(A3)} and we obtained similar results.

The problem data is chosen as
\begin{align*}
f=0, \ \  y_d = (x^2+y^2)^{s},\ \  \bm \beta = [1, 1], \ \ \ \mbox{and} \  \ \ \gamma = 1,
\end{align*}
where $s=-10^{-5}$.  Therefore $ y_d $ has a singularity, but $y_d\in H^{1-\varepsilon}(\Omega) $ for all $\varepsilon> 2\times 10^{-5}$.  Since the largest interior angle is $ \omega = {\pi}/{2}$, we have $ r_\Omega = 3/2-\varepsilon $ for all $\varepsilon> 2\times 10^{-5}$. 

An exact solution for this problem is not known, and therefore we compare the approximate solutions computed using various values of $ h $ and a reference solution computed on a fine mesh with 524288 elements and $h = 2^{-12}\sqrt 2$.  

For $k=1$, \Cref{cor:main_result} in \Cref{sec:analysis}  gives the convergence rates
\begin{align*}
&\norm{y-{y}_h}_{0,\Omega}=O( h^{3/2-\varepsilon} ),\qquad  \;\norm{z-{z}_h}_{0,\Omega}=O( h^{3/2-\varepsilon} ),\\
&\norm{\bm{q}-\bm{q}_h}_{0,\Omega} = O( h^{1-\varepsilon} ),\quad  \quad\;\; \norm{\bm{p}-\bm{p}_h}_{0,\Omega} = O( h^{3/2-\varepsilon} ),
\end{align*}
and
\begin{align*}
&\norm{u-{u}_h}_{0,\Gamma} = O( h^{3/2-\varepsilon}).
\end{align*}
\Cref{table_1} shows the computed errors for a variety of mesh sizes.  The convergence rates for the optimal control $ u $ and the flux $ \bm q $ are precisely predicted by the convergence theory presented here.  The convergence rates for the other variables are higher than predicted by the theory; this phenomenon has been observed numerically in other works on numerical methods for boundary control problems \cite{HuShenSinglerZhangZheng_HDG_Dirichlet_control1,MR3070527,MR3317816,MR2806572}.
\begin{table}
	\begin{center}
		\begin{tabular}{|c|c|c|c|c|c|}
			\hline
			$h/\sqrt{2}$ &$2^{-4}$& $2^{-5}$&$2^{-6}$&$2^{-7}$ & $2^{-8}$ \\
			\hline
			$\norm{\bm{q}-\bm{q}_h}_{0,\Omega}$&2.57e-2& 1.32e-2&6.66e-3  & 3.35e-3 & 1.68e-3 \\
			\hline
			order&-& 0.96& 0.98  &1.00& 1.00 \\
			\hline
			$\norm{\bm{p}-\bm{p}_h}_{0,\Omega}$&5.01e-4 & 1.57e-4&4.57e-5 &1.29e-5 &3.55e-6\\
			\hline
			order&-&  1.68&1.78 &1.83 & 1.86 \\
			\hline
			$\norm{{y}-{y}_h}_{0,\Omega}$&2.00e-4& 5.04e-5 &1.29e-5 & 3.26e-6  & 8.21e-7\\
			\hline
			order&-& 1.99&1.99&1.98 & 1.99\\
			\hline
			$\norm{{z}-{z}_h}_{0,\Omega}$& 3.41e-6& 5.20e-7&7.60e-8  &1.08e-8 & 1.49e-9 \\
			\hline
			order&-& 2.71&2.77&2.82& 2.85 \\
			\hline
			$\norm{{u}-{u}_h}_{0,\Gamma}$&2.40e-3& 9.04e-4&3.27e-4 &1.17e-4  & 4.18e-5\\
			\hline
			order&-&  1.41& 1.47&1.48& 1.49 \\
			\hline
		\end{tabular}
	\end{center}
	\caption{2D Example with $k=1$: Errors for the control $u$, state $y$, adjoint state $z$, and the fluxes $\bm q$ and $\bm p$.}\label{table_1}
\end{table}

For the case $k=0$, \Cref{cor:main_result} in \Cref{sec:analysis}  gives the convergence rates
\begin{align*}
\norm{y-{y}_h}_{0,\Omega}=O( h^{1/2} ),~~  \;\norm{z-{z}_h}_{0,\Omega}=O( h^{1/2} ),~~ \norm{\bm{p}-\bm{p}_h}_{0,\Omega} = O( h^{1/2} ),
\end{align*}
and
\begin{align*}
&\norm{u-{u}_h}_{0,\Gamma} = O( h^{1/2}).
\end{align*}
As mentioned in \Cref{sec:analysis}, the convergence rates for $ k = 0 $ obtained in \Cref{cor:main_result} are suboptimal.  Numerical results are reported in \Cref{table_2}, and all convergence rates are higher than predicted by the theory.  Obtaining optimal convergence rates for the $ k = 0 $ case is an interesting topic for future work.
\begin{table}
	\begin{center}
		\begin{tabular}{|c|c|c|c|c|c|}
			\hline
			$h/\sqrt{2}$ &$2^{-4}$& $2^{-5}$&$2^{-6}$&$2^{-7}$ & $2^{-8}$\\
			\hline
			$\norm{\bm{q}-\bm{q}_h}_{0,\Omega}$&4.67e-2& 3.27e-2&2.01e-2   & 1.18e-2 & 6.66e-3 \\
			\hline
			order&-&  0.51   &0.70   &0.76   &0.82 \\
			\hline
			$\norm{\bm{p}-\bm{p}_h}_{0,\Omega}$&1.61e-3  & 9.22e-4&4.81e-4 &2.44e-4 &1.22e-4\\
			\hline
			order&-&  0.80   &0.94   &0.98   &1.00 \\
			\hline
			$\norm{{y}-{y}_h}_{0,\Omega}$&6.54e-4& 2.77e-4 &9.10e-5  & 2.83e-5  & 8.80e-6 \\
			\hline
			order&-& 1.24   &1.60   &1.68   &1.69 \\
			\hline
			$\norm{{z}-{z}_h}_{0,\Omega}$& 6.54e-5& 1.82e-5&4.74e-6  &1.20e-6 & 3.02e-7 \\
			\hline
			order&-& 1.85   &1.94   &2.00   &2.00\\
			\hline
			$\norm{{u}-{u}_h}_{0,\Gamma}$&5.88e-3 & 3.50e-3&1.92e-3   &1.01e-3 & 5.21e-4 \\
			\hline
			order&-&  0.75   &0.87   &0.92   &0.96 \\
			\hline
		\end{tabular}
	\end{center}
	\caption{2D Example with $k=0$: Errors for the control $u$, state $y$, adjoint state $z$, and the fluxes $\bm q$ and $\bm p$.}\label{table_2}
\end{table}

\section{Conclusion}

We considered a Dirichlet boundary control problem for an elliptic convection diffusion equation and made two contributions.  First, for a polygonal domain we considered a very weak mixed formulation of the PDE, and established well-posedness and regularity results for the PDE and the optimal control problem.  Next, we proposed a new HDG method to approximate the solution of the optimality system and established optimal superlinear convergence rates for the control under certain assumptions on the domain and the desired state.  We presented numerical results to demonstrate the performance of the method.

In the second part of this work \cite{HuMateosSinglerZhangZhang17}, we remove the restrictions on the domain and the desired state and use very different analysis techniques to prove optimal convergence rates for the control.

As far as we are aware, this is the first work to explore the analysis of this Dirichlet control problem and the numerical analysis of a computational method for this problem.  There are a number of topics that can be explored further, including an improved numerical analysis for the $ k = 0 $ case, the convergence analysis of other HDG methods for this problem, and HDG methods for more challenging Dirichlet control problems.

\section*{Acknowledgments} The authors thank Bernardo Cockburn for helpful conversations.

\bibliographystyle{amsplain}
\bibliography{yangwen_ref_papers,yangwen_ref_books}

\end{document}